\setlist{leftmargin=*}
\definecolor{hrefcolor}{rgb}{0.0,0.4,0.7}
\definecolor{citecolor}{rgb}{0.0,0.35,0.2}
\definecolor{structure}{rgb}{0.09,0.09,0.44}
\definecolor{halfgray}{gray}{0.55}
\def\appendix{
    \par
    \setcounter{section}{0}%
    \setcounter{subsection}{0}%
    \gdef\thesection{\@Alph\c@section}
}
\spnewtheorem{assumption}{Assumption}{\bfseries}{\rmfamily}
\crefname{figure}{Figure}{Figures}
\crefname{assumption}{Assumption}{Assumptions}
\newcommand{\field}[1]{\mathbb{#1}}
\newcommand{\N}{\mathbb{N}}
\newcommand{\R}{\field{R}}
\newcommand{\extR}{\overline \R}
\newcommand{\B}{B}
\newcommand{\norm}[1]{\|#1\|}
\newcommand{\abs}[1]{|#1|}
\newcommand{\inv}[1]{#1^{-1}}
\newcommand{\grad}{\nabla}
\newcommand{\freevar}{\,\boldsymbol\cdot\,}
\newcommand{\Union}\bigcup
\newcommand{\Isect}\bigcap
\newcommand{\union}\cup
\newcommand{\isect}\cap
\newcommand{\bigunion}\bigcup
\newcommand{\bigisect}\bigcap
\newcommand{\defeq}{:=}
\newcommand{\downto}{\searrow}
\newcommand{\upto}{\nearrow}
\newcommand{\subdiff}{\partial}
\DeclareMathOperator*{\argmin}{arg\,min}
\DeclareMathOperator{\dom}{dom}
\DeclareMathOperator{\divergence}{div}
\def \uminusSym{\setbox0=\hbox{$\cup$}\rlap{\hbox 
        to\wd0{\hss\raise0.5ex\hbox{$\scriptscriptstyle{-}$}\hss}}\box0}
\newcommand{\mathvar}[1]{\textup{#1}}
\newcommand{\iprod}[2]{\langle #1,#2\rangle}
\newcommand{\dualprod}[2]{\langle #1 | #2\rangle}
\newcommand{\weakto}{\mathrel{\rightharpoonup}}
\def \weaktostarsym{\setbox0=\hbox{$\rightharpoonup$}\rlap{\hbox 
        to\wd0{\hss\raise1ex\hbox{$\scriptscriptstyle{*\,}$}\hss}}\box0}
    \def \weaktostar    {\mathrel{\weaktostarsym}}
\newcommand{\setto}{\rightrightarrows}
\def\opt#1{\bar #1}
\def\realopt#1{\hat #1}
\def\alt#1{\tilde #1}
\def\this#1{#1^k}
\def\nexxt#1{#1^{k+1}}
\def\prev#1{#1^{k-1}}
\def\optu{{\opt{u}}}
\def\optx{{\opt{x}}}
\def\opty{{\opt{y}}}
\def\realoptu{{\realopt{u}}}
\def\realoptx{{\realopt{x}}}
\def\realopty{{\realopt{y}}}
\def\nextu{\nexxt{u}}
\def\nextx{\nexxt{x}}
\def\nexty{\nexxt{y}}
\def\thisu{\this{u}}
\def\thisx{\this{x}}
\def\thisy{\this{y}}
\def\prevu{\prev{u}}
\def\prevx{\prev{x}}
\def\prevy{\prev{y}}
\def\d{\,\mathrm{d}}
\let\phi\varphi
\let\epsilon\varepsilon
\def\linear{\mathbb{L}}
\def\Meas{\mathcal{M}}
\def\BVspace{\mathvar{BV}}
\def\GenGap{\mathcal{G}}
\DeclareMathOperator{\prox}{prox}
\def\lagrangian{\mathcal{L}}
\renewrobustcmd{\downto}{{{\mathchoice%
            {\rotatebox[origin=c]{-20}{$\to$}}% display
            {\rotatebox[origin=c]{-20}{$\to$}}% text
            {\rotatebox[origin=c]{-20}{\scalebox{0.75}{$\to$}}}% subscript
            {\rotatebox[origin=c]{-20}{\scalebox{0.6}{$\to$}}}% subsubscript
}}}
\renewrobustcmd{\upto}{{{\mathchoice%
            {\rotatebox[origin=c]{20}{$\to$}}% display
            {\rotatebox[origin=c]{20}{$\to$}}% text
            {\rotatebox[origin=c]{20}{\scalebox{0.75}{$\to$}}}% subscript
            {\rotatebox[origin=c]{20}{\scalebox{0.6}{$\to$}}}% subsubscript
}}}
\newcommand{\termnoindex}[1]{\emph{#1}}
\newcommand{\term}[2][]{\emph{#2}\if\relax\detokenize{#1}\relax\index{#2}\else\index{#1}\fi}
\newcommand{\indexalso}[1]{#1\index{#1}}
\newenvironment{alg}[1]{\par\noindent\begin{minipage}{\textwidth}\begin{programcode}{#1}}{\end{programcode}\end{minipage}\par\addvspace{\baselineskip}}
\newenvironment{fixedimportant}[1]{\par\noindent\begin{minipage}{\textwidth}\begin{important}{#1}}{\end{important}\end{minipage}\par\addvspace{\baselineskip}}
\begin{document}

\title*{First-order primal-dual methods for nonsmooth nonconvex optimisation}
% for imaging and inverse problems}
% Use \titlerunning{Short Title} for an abbreviated version of
% your contribution title if the original one is too long
\author{Tuomo Valkonen}
% Use \authorrunning{Short Title} for an abbreviated version of
% your contribution title if the original one is too long
\institute{Tuomo Valkonen \at Center for Mathematical Modeling, Escuela Politécnica Nacional, Quito, Ecuador \emph{and} Department of Mathematics and Statistics, University of Helsinki, Finland; \email{tuomo.valkonen@iki.fi}}

\maketitle

\abstract{
    We provide an overview of primal-dual algorithms for nonsmooth and non-convex-concave saddle-point problems.
    This flows around a new analysis of such methods, using Bregman divergences to formulate simplified conditions for convergence.
}

%%%%%%%%%%%%%%%%%%%%%%%%%%%%%%%%%%%%%%%%%%%%%%%%%%%%%%%%%%%%%%%%%%%%%%%%%%%%%%%%%%%%%%%%%
\section{Introduction}
\label{sec:intro}
%%%%%%%%%%%%%%%%%%%%%%%%%%%%%%%%%%%%%%%%%%%%%%%%%%%%%%%%%%%%%%%%%%%%%%%%%%%%%%%%%%%%%%%%%

Interesting imaging problems can often be written in the general form \index{problem!min-max}\index{problem!saddle point}
\begin{equation}
    \label{eq:intro:genprob}
    \tag{S}
    \min_{x \in X}\max_{y \in Y}~ F(x) + K(x, y) - G_*(y),
\end{equation}
where $X$ and $Y$ are Banach spaces, $K \in C^1(X, Y)$, and $F: X \to \extR$ and $G_*: Y \to \extR$ are convex, proper, lower semicontinuous functions with $G_*$ the \hyperref[sec:glossary]{preconjugate }of some $G: Y^* \to \extR$, meaning $G=(G_*)^*$. The functions $F$ and $G_*$ may be nonsmooth.
In this chapter, we provide an overview of proximal-type primal-dual algorithms for this class of problems together with a simplified analysis, based on Bregman divergences.

\begin{fixedimportant}{Notation, conventions, and basic convex analysis}%
    As is standard in optimisation, all vector/Banach/Hilbert spaces in this chapter are over the real field without it being explicitly mentioned.
    For basic definitions of convex analysis, such as the (pre)conjugate and the subdifferential, see the \hyperref[sec:glossary]{glossary} at the end of the chapter or textbooks such as \cite{hiriarturruty2004fundamentals,rockafellar-convex-analysis,clasonvalkonen2020nonsmooth,ekeland1999convex}.
\end{fixedimportant}

A common instance of \eqref{eq:intro:genprob} is when $K(x, y)=\dualprod{Ax}{y}$ for a linear operator $A \in \linear(X; Y^*)$ with $\dualprod{\freevar}{\freevar}: Y^* \times Y \to \R$ denoting the dual product. Then \eqref{eq:intro:genprob} arises from writing $G$ in terms of its (pre)conjugate $G_*$ in \index{problem!primal}
\begin{equation}
    \label{eq:intro:fgprob}
    \min_{x \in X}~ F(x) + G(Ax).
\end{equation}
We now discuss sample imaging and inverse problems of the types \eqref{eq:intro:genprob} and \eqref{eq:intro:fgprob}, and then outline our approach to solving them in the rest of the chapter.

%%%%%%%%%%%%%%%%%%%%%%%%%%%%%%%%%%%%%%%%%%%%%%%%%%%%%%%%%%%%%%%%%%%%%%%%%%%%%%%%%%%%%%%%%
\subsection{Sample problems}
%%%%%%%%%%%%%%%%%%%%%%%%%%%%%%%%%%%%%%%%%%%%%%%%%%%%%%%%%%%%%%%%%%%%%%%%%%%%%%%%%%%%%%%%%

\index{inverse problem|(}
Optimisation problems of the type \eqref{eq:intro:fgprob} can effectively model linear \term[inverse problem!linear]{inverse problems}; typically one would attempt to minimise the sum of a data-term and a regulariser,
\begin{equation}
    \label{eq:intro:invprob}
    \min_{x \in X}~ \Phi(z-Tx) + G(Ax),
\end{equation}
where
\begin{itemize}[label={--}]
    \item $T: \in \linear(X; \R^n)$ is a forward operator, mapping our unknown $x$ into a finite number of measurements. .
    \item $\Phi$ models noise $\nu$ in the data $z \in \R^n$; for normal-distributed noise, $\Phi(z)=\frac{1}{2}\norm{z}^2$;
    \item $G \circ A$ is a typically nonsmooth regularisation term that models our prior assumptions on what a good solution to the ill-posed problem $z=Tx+\nu$ should be; in imaging, what ``looks good''. 
\end{itemize}
For conventional total variation regularisation on a domain $\Omega \subset \R^m$ one would take $G(y^*)=\alpha \norm{y^*}_{\Meas(\Omega; \R^m)}$ the Radon norm of the measure $y^* \in \Meas(\Omega; \R^m)$ weighted by the regularisation parameter $\alpha>0$, and $A=D \in \linear(\BVspace(\Omega); \Meas(\Omega; \R^m))$ the \hyperref[sec:glossary]{distributional derivative} \cite{ambrosio2000fbv}.
%Alternatives include regularisation in a wavelet basis with $A$ a wavelet decomposition \cite{chambolledevore1998nonlinear} or Total Generalised Variation \cite{bredies2009tgv,sampta2011tgv,tuomov-dtireg}.
Simple examples of a \emph{linear} forward operator $T$ include:
\begin{itemize}[label={--}]
    \item the identity for denoising \cite{Rud1992},
    \item a convolution operation for deblurring or deconvolution \cite{vogel1998fast},
    \item a subsampling operator for inpainting \cite{shen2002mathematical},
    \item \index{transform!Fourier}\index{magnetic resonance imaging|see {MRI}}\index{MRI} the Fourier transform for magnetic resonance imaging (\indexalso{MRI}) \cite{nishimura1996principles,lustig2007sparse}, and
    \item \index{transform!Radon} the Radon transform for \index{tomography!computational}computational (\indexalso{CT}) or \index{tomography!positron emission} positron emission tomography (\indexalso{PET}) \cite{ollinger1997pet}.
\end{itemize}
The last two examples would frequently be combined with subsampling for reconstruction from limited data.

In many important problems $T$ is, however, nonlinear:
\begin{itemize}[label={--}]
    \item \index{MRI!velocity-encoded} a pointwise application of $(r,\phi) \mapsto  re^{-\mathbf{i}\phi}$ for phase and amplitude reconstruction for velocity-encoded magnetic resonance imaging \cite{tuomov-nlpdhgm},
    \item \index{MRI!diffusion}\index{DTI} a pointwise application of $u \mapsto s_0 - s e^{-\iprod{u}{b}}$ to model the Stejskal--Tanner equation in diffusion tensor imaging \cite{tuomov-nlpdhgm,kingsley2006introduction}, or
    \item \index{tomography!electrical impedance}\index{tomography!acoustic}\index{tomography!optical} the solution operator of nonlinear partial differential equation (PDE) for several forms of tomography from magnetic and electric to acoustic and optical \cite{nishimura1996principles,ollinger1997pet,arridge2011optical,kuchment2011mathematics,hunt2014weighing,trucu2009bioheat,uhlmann2009eit,lipponen2011nonstationary}.
\end{itemize}
In the last example, the PDE governs the physics of measurement, typically relating boundary measurements and excitations to interior data. The methods we study in this chapter are applied to electrical impedance tomography in \cite{jauhiainen2019gaussnewton,tuomov-nlpdhgm-block}.

How to fit a \index{inverse problem!nonlinear}nonlinear forward operator $T$ into the framework \cref{eq:intro:genprob} that requires both $F$ and $G_*$ to be convex? If the noise model $\Phi: \R^n \to \extR$ is convex, proper, and lower semicontinuous, we can write \eqref{eq:intro:invprob} using the Fenchel conjugate $\Phi^*$ and $K_{TA}(x, (y_1, y_2)) \defeq \dualprod{z-T(x)}{y_1}+\dualprod{Ax}{y_2}$ as
\begin{equation}
    \label{eq:intro:twoblock}
    \min_{x \in X} \max_{(y_1, y_2) \in \R^n \times Y}~ K_{TA}(x,(y_1,y_2)) - \Phi^*(y_1) - G_*(y_2).
\end{equation}

This is of the form \cref{eq:intro:genprob} for the functions $\alt F \equiv 0$ and $\alt G_*(y_1, y_2) \defeq \Phi^*(y_1) - G_*(y_2)$.
Even for linear $T$, although \eqref{eq:intro:invprob} is readily of the form \eqref{eq:intro:fgprob} and hence \eqref{eq:intro:genprob}, this reformulation may allow expressing \eqref{eq:intro:invprob} in the form \eqref{eq:intro:genprob} with both $\alt F$ and $\alt G_*$ “prox-simple”.
We will make this concept, important for the effective realisation of algorithms, more precise in \cref{sec:pdbs}.

\index{inverse problem|)}

Finally, fully general $K$ in \eqref{eq:intro:genprob} was shown in \cite{tuomov-nlpdhgm-general} to be useful for highly nonsmooth and nonconvex problems, such as the  \index{segmentation!Potts} \cite{geman1984stochastic}. Indeed, the “0-function”
\[
    \abs{t}_0 \defeq
    \begin{cases}
        0, & t=0, \\
        1, & t \ne 0,
    \end{cases}
\]
can be written 
\[
     \abs{t}_0 = \sup_{s \in \R} \rho(st)
     \quad\text{for}\quad \rho(t)=2t-t^2.
\]
%Thus the generalised preconjugate of $\abs{\freevar}_0$ with respect to the kernel $K(s,t)=\rho(st)$ is $G_*\equiv 0$.
For the (anisotropic) Potts model this is applied pixelwise on a discretised image gradient computed for an $n_1 \times n_2$ image by $\grad_h: \R^{n_1n_2} \to \R^{2 \times n_1n_2}$ \cite{tuomov-nlpdhgm-general}:
\begin{equation}
    \label{eq:intro:potts}
    \min_{x \in \R^{n_1n_2}} \max_{y \in \R^{2 \times n_1n_2}} \frac{1}{2}\norm{b-x}_2^2 + \sum_{i=1}^{n_1}\sum_{j=1}^{n_2} \rho(\iprod{[\grad_h x]_{ij}}{y_{ij}}),
\end{equation}
where $b \in \R^{n_1n_2}$ is the image to be segmented.

%%%%%%%%%%%%%%%%%%%%%%%%%%%%%%%%%%%%%%%%%%%%%%%%%%%%%%%%%%%%%%%%%%%%%%%%%%%%%%%%%%%%%%%%%
\subsection{Outline}
%%%%%%%%%%%%%%%%%%%%%%%%%%%%%%%%%%%%%%%%%%%%%%%%%%%%%%%%%%%%%%%%%%%%%%%%%%%%%%%%%%%%%%%%%

We introduce in \cref{sec:pdbs} methods for \eqref{eq:intro:genprob} inspired by the \term[primal-dual!splitting!proximal]{primal-dual proximal splitting} (\indexalso{PDPS}) of \cite{chambolle2010first,pock2009mumford} for bilinear $K$, commonly known as the \term{Chambolle--Pock method}. %Unlike most of the work on such methods, presented in Hilbert spaces or finite dimensions,
We work in Banach spaces, as was done in \cite{hohage2014generalization}. To be able to define proximal-type methods in Banach spaces, in \cref{sec:bregman}, we introduce and recall the crucial properties of so-called \term[Bregman divergence]{Bregman divergences}.

Our main reason for working with Bregman divergences is, however, not the generality of Banach spaces. Rather, they provide a powerful proof tool to deal with the general $K$ in \eqref{eq:intro:genprob}. This approach allows us in \cref{sec:convergence} to significantly simplify and better explain the original convergence proofs and conditions of \cite{chambolle2010first,tuomov-nlpdhgm,tuomov-nlpdhgm-redo,tuomov-nlpdhgm-general,tuomov-nlpdhgm-block}. Without additional effort, they also allow us to present block-adapted methods like those in \cite{tuomov-cpaccel,tuomov-blockcp,tuomov-nlpdhgm-block}.

\hypertarget{target:roadmap}{Our overall approach} and the internal organisation of \cref{sec:convergence} centres around the following three main ingredients of the convergence proof:
\begin{enumerate}[label=(\roman*)]
    \item A \textbf{three-point identity}, satisfied by all Bregman divergences (shown in \cref{sec:bregman} and employed in \cref{sec:convergence:fundamental}),
    \item \textbf{(Semi-)ellipticity} of the algorithm-defining Bregman divergences (concept defined in \cref{sec:bregman}, specific Bregman divergence in \cref{sec:pdbs}, and its ellipticity verified in \cref{sec:convergence:ellipticity,sec:convergence:ellipticity:block} through several examples), and
    \item A \textbf{non-smooth second-order growth} condition around a solution of \eqref{eq:intro:genprob} (treated in \cref{sec:convergence:second,sec:convergence:second:block}).
\end{enumerate}
With these basic ingredients, we then prove convergence in \cref{sec:convergence:iterate,sec:convergence:gaps}.
In the present overview, with focus on key concepts and aiming to avoid technical complications, we only cover, weak, strong, and linear convergence of iterates, and the convergence of gap functionals when $K$ is convex-concave.

In \cref{sec:inertial} we improve the basic method by adding dependencies to earlier iterates, a form of inertia. This is needed to develop an an effective algorithm for $K$ not affine in $y$, including the aforementioned formulation of the Potts segmentation model.
We finish in \cref{sec:further} with pointers to alternative methods and further extensions.

%%%%%%%%%%%%%%%%%%%%%%%%%%%%%%%%%%%%%%%%%%%%%%%%%%%%%%%%%%%%%%%%%%%%%%%%%%%%%%%%%%%%%%%%%
\section{Bregman divergences}
\label{sec:bregman}
%%%%%%%%%%%%%%%%%%%%%%%%%%%%%%%%%%%%%%%%%%%%%%%%%%%%%%%%%%%%%%%%%%%%%%%%%%%%%%%%%%%%%%%%%

The norm and inner product in a (real) Hilbert space $X$ satisfy the \indexalso{three-point identity}
\begin{equation}
    \label{eq:bregman:hilbert-three-point-identity}
    \iprod{x-y}{x-z}_X
    = \frac{1}{2}\norm{x-y}_X^2
    - \frac{1}{2}\norm{y-z}_X^2
    + \frac{1}{2}\norm{x-z}_X^2 \quad(x,y,z\in X).
\end{equation}
This is crucial for convergence proofs of optimisation methods \cite{tuomov-proxtest}, so we would like to have something similar in Banach spaces---or other more general spaces.
Towards this end, we let $J: X \to \R$ be a Gâteaux-differentiable function.\footnote{The differentiability assumption is for notational and presentational simplicity; otherwise we would need to write the Bregman divergence as $B_J^p(z, x) \defeq J(z)-J(x)-\dualprod{p}{z-x}_X$ for some subdifferential $p$ of $J$, and define explicit updates of this subdifferential in algorithms.}
Then one can define the asymmetric \term{Bregman divergence}
\begin{equation}
    \label{eq:bregman:def}
    B_J(z, x) \defeq J(z)-J(x)-\dualprod{D J(x)}{z-x}_X
    \quad (x, z \in X).
\end{equation}
This function is non-negative \emph{if and only if}\footnote{For the entirely algebraic proof of the ``only if'', see \cite[Theorem 4.1.1]{hiriarturruty2004fundamentals}.} the \term{generating function} $J$ is convex; it is not in general a true distance, as it can happen that $B_J(x, z)=0$ although $x=z$.

%It is also possible to symmetrize the distance by considering $\tilde B_J(x, z) \defeq B_J^{q}(x, z) + B_J^p(z, x)$ with $q \in \subdiff J(z)$ and $z \in \dom J$, but even the symmetrized divergence is not generally a true distance as it can happen that $B_J(x, z)=0$ although $x=z$.

Writing $D_1$ for the Gâteaux derivative with respect to the first parameter, we have
\begin{equation}
    \label{eq:bregman:d}
    D_1 B_J(x, z) = DJ(z)-DJ(x).
\end{equation}
Moreover, the Bregman divergence satisfies for any $\optx \in X$ the \term{three-point identity}
\begin{equation}
    \label{eq:bregman:three-point-identity}
    \begin{aligned}[t]
    \dualprod{D_1 B_J(x, z)}{x-\optx}_X
    &
    =
    \dualprod{D J(x)-D J(z)}{x-\optx}_X
    \\
    &
    =
    B_J(\optx, x)  - B_J(\optx, z) + B_J(x,z).
    \end{aligned}
\end{equation}
Indeed, writing the right-hand side out, we have
\[
    \begin{split}
    B_J(\optx, x)  - B_J(\optx, z) + B_J(x,z)
    &
    =
    [J(\optx)-J(x)-\dualprod{D J(x)}{\optx-x}_X]
    \\
    \MoveEqLeft[-1]
    -
    [J(\realoptx)-J(z)-\dualprod{D J(z)}{\realoptx-z}_X]
    \\
    \MoveEqLeft[-1]
    +
    [J(x)-J(z)-\dualprod{D J(z)}{x-z}_X],
    \end{split}
\]
which immediately gives the three-point identity.

\begin{example}
    In a Hilbert space $X$, the \term[generating function!standard]{standard} \term{generating function} $J= N_X \defeq \frac{1}{2}\norm{\freevar}_X^2$ yields $B_J(z, x)=\frac{1}{2}\norm{z-x}_X^2$, so \eqref{eq:bregman:three-point-identity} recovers \eqref{eq:bregman:hilbert-three-point-identity}.
\end{example}

We will frequently require $B_J$ to be \term{non-negative} or \term{semi-elliptic} ($\gamma=0$) or \term{elliptic} ($\gamma>0$) within some $\Omega \subset X$. These notions mean that
\begin{equation}
    \label{eq:bregman:ellipticity}
    B_J(z, x) \ge \frac{\gamma}{2}\norm{z-x}_{X}^2
    \quad (x, z \in \Omega).
\end{equation}
Equivalently, this defines $J$ to be \term[subdifferentiable!strongly]{($\gamma$-strongly) subdifferentiable} within $\Omega$. When $\Omega=X$, we simply call $B_J$ \mbox{(semi-)}elliptic and $J$ ($\gamma$-strongly) subdifferentiable.\footnote{In Banach spaces strong subdifferentiability is implied by strong convexity, as defined without subdifferentials. In Hilbert spaces the two properties are equivalent.}

We will in \cref{sec:inertial} also need a Cauchy inequality for Bregman divergences. We base this on strong subdifferentiability and the smoothness property \eqref{eq:smoothness-grad-smoothness} in the next lemma. The latter holding with $\Omega=X$ implies that $DJ$ is $L$-Lipschitz, and in Hilbert spaces is equivalent to this property; see \cite[Theorem 18.15]{bauschke2017convex} or \cite[Appendix C]{tuomov-proxtest}.

\begin{lemma}
    \label{lemma:bregman:cauchy}
    Suppose $J: X \to \R$ is Gâteaux-differentiable and $\gamma$-strongly subdifferentiable within $\Omega$, and satisfies for some $L>0$ the \term{subdifferential smoothness}
    \begin{equation}
        \label{eq:smoothness-grad-smoothness}
        \frac{1}{2L}\norm{D J(x)-D J(y)}_{X^*}^2
        \le J(x) - J(y) - \dualprod{D J(y)}{x-y}
        \quad (x, y \in \Omega).
    \end{equation}
    Then, for any $\alpha>0$, %we have the generalised Cauchy inequality
    \begin{equation*}
        \abs{\dualprod{D_1 B_J(x, y)}{z-x}}
        \le \frac{L}{\alpha} B_J(x, y) + \frac{\alpha}{\gamma} B_J(z, x)
        \quad (x, y, z \in \Omega).
    \end{equation*}
\end{lemma}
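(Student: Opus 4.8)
The plan is to reduce the statement to the elementary scalar Young's inequality, once the two vector-valued quantities have been controlled by the two structural hypotheses. First I would use the derivative formula \eqref{eq:bregman:d} to identify $D_1 B_J(x, y) = DJ(y)-DJ(x)$ (the sign is immaterial here, as only the absolute value enters), and apply the Cauchy--Schwarz inequality for the duality pairing $\dualprod{\freevar}{\freevar}$ to obtain
\[
    \abs{\dualprod{D_1 B_J(x, y)}{z-x}}
    \le \norm{DJ(x)-DJ(y)}_{X^*}\,\norm{z-x}_X.
\]

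The second step is to split this product by Young's inequality, $ab \le \frac{1}{2\alpha}a^2 + \frac{\alpha}{2}b^2$, valid for every $\alpha>0$, applied with $a = \norm{DJ(x)-DJ(y)}_{X^*}$ and $b = \norm{z-x}_X$, and then to estimate the two squared factors separately. For $a^2$ I would invoke the subdifferential smoothness \eqref{eq:smoothness-grad-smoothness}, whose right-hand side is by \eqref{eq:bregman:def} precisely $B_J(x,y)$; this yields $\norm{DJ(x)-DJ(y)}_{X^*}^2 \le 2L\,B_J(x,y)$. For $b^2$ I would use $\gamma$-strong subdifferentiability in its ellipticity form \eqref{eq:bregman:ellipticity}, giving $\norm{z-x}_X^2 \le \frac{2}{\gamma}B_J(z,x)$. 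Substituting both bounds produces $\frac{1}{2\alpha}\cdot 2L\,B_J(x,y) + \frac{\alpha}{2}\cdot\frac{2}{\gamma}B_J(z,x) = \frac{L}{\alpha}B_J(x,y) + \frac{\alpha}{\gamma}B_J(z,x)$, which is the claim.

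There is no deep obstacle here; the difficulty is purely bookkeeping, with two points deserving care. The genuine subtlety is that the argument needs $\gamma>0$, i.e.\ the \emph{elliptic} rather than merely semi-elliptic case, since the estimate on $\norm{z-x}_X^2$ divides by $\gamma$; this is consistent with $\gamma$ appearing in the denominator of the conclusion. The observation that makes the coefficients line up exactly is recognising that the right-hand side of \eqref{eq:smoothness-grad-smoothness} is literally $B_J(x,y)$, so that the Young parameter $\alpha$ propagates directly into the stated coefficients; choosing the particular split $\frac{1}{2\alpha}a^2 + \frac{\alpha}{2}b^2$ (rather than any other admissible Young weighting) is precisely what reproduces $L/\alpha$ and $\alpha/\gamma$.
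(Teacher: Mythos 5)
Your argument is correct and is essentially identical to the paper's own proof: the paper likewise combines Cauchy--Schwarz with the weighted Young inequality (calling the combination ``Cauchy's inequality''), then bounds $\norm{DJ(x)-DJ(y)}_{X^*}^2$ by $2L\,B_J(x,y)$ via \eqref{eq:smoothness-grad-smoothness} and $\norm{z-x}_X^2$ by $\tfrac{2}{\gamma}B_J(z,x)$ via strong subdifferentiability. Your remarks on the necessity of $\gamma>0$ and on the right-hand side of \eqref{eq:smoothness-grad-smoothness} being exactly $B_J(x,y)$ are accurate but do not change the route.
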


\begin{proof}
    By Cauchy's inequality and \eqref{eq:bregman:d},
    \[
        %\begin{aligned}[t]
        \abs{\dualprod{D_1 B_J(x, y)}{z-x}}
        %&
        %=\dualprod{DJ(x)-DJ(y)}{z-x}
        %\\
        %&
        \le
        \frac{1}{2\alpha}\norm{DJ(x)-DJ(y)}_{X^*}^2
        +
        \frac{\alpha}{2}\norm{z-x}_{X}^2.
        %\end{aligned}
    \]
    By the strong convexity, $\frac{\gamma}{2}\norm{z-x}_{X}^2  \le B_J(z,x)$, and by the smoothness property \eqref{eq:smoothness-grad-smoothness},
    $
        \frac{1}{2L}\norm{DJ(x)-DJ(y)}_{X^*}^2
        \le B_J(x, y)
    $.
    Together these estimates yield the claim.
\end{proof}

%%%%%%%%%%%%%%%%%%%%%%%%%%%%%%%%%%%%%%%%%%%%%%%%%%%%%%%%%%%%%%%%%%%%%%%%%%%%%%%%%%%%%%%%%
\section{Primal-dual proximal splitting}
\label{sec:pdbs}
%%%%%%%%%%%%%%%%%%%%%%%%%%%%%%%%%%%%%%%%%%%%%%%%%%%%%%%%%%%%%%%%%%%%%%%%%%%%%%%%%%%%%%%%%

We now formulate a basic version of our primal-dual method. Later in \cref{sec:inertial} we improve the algorithm to be more effective when $K$ is not affine in $y$.

\begin{fixedimportant}{Notation}
    Throughout the manuscript, we combine the primal and dual variables $x$ and $y$ into variables involving the letter $u$:
    \[
        u=(x, y),
        \quad
        \thisu=(\thisx,\thisy),
        \quad
        \realoptu=(\realoptx, \realopty),
        \quad\text{etc.}
    \]
\end{fixedimportant}

% Hack specific to how the document builds as of 2019-09-22 to make footnote stay far away
\medskip

%%%%%%%%%%%%%%%%%%%%%%%%%%%%%%%%%%%%%%%%%%%%%%%%%%%%%%%%%%%%%%%%%%%%%%%%%%%%%%%%%%%%%%%%%
\subsection{Optimality conditions and proximal points}
%%%%%%%%%%%%%%%%%%%%%%%%%%%%%%%%%%%%%%%%%%%%%%%%%%%%%%%%%%%%%%%%%%%%%%%%%%%%%%%%%%%%%%%%%

We define the \term{Lagrangian} as
\[
    \lagrangian(x, y) \defeq F(x) + K(x, y) - G_*(y).
\]
A \term{saddle point} $\realoptu=(\realoptx,\realopty)$ of the problem \eqref{eq:intro:genprob} satisfies, by definition
\[
    \lagrangian(\realoptx, y) \le \lagrangian(\realoptx,\realopty) \le \lagrangian(x, \realopty)
    \quad
    \text{for all } u=(x,y) \in X \times Y.
\]
Writing $D_x K$ and $D_y K$ for the Gâteaux derivatives of $K$ with respect to the two variables, if $K$ is convex-concave, basic results in convex analysis \cite{ekeland1999convex,bauschke2017convex} show that
\begin{equation}
    \label{eq:convergence:oc}
    -D_x K(\realoptx, \realopty) \in \subdiff F(\realoptx)
    \quad\text{and}\quad
    D_y K(\realoptx, \realopty) \in \subdiff G_*(\realopty)
\end{equation}
is necessary and sufficient for $\realoptu$ to be saddle point.
If $K$ is $C^1$, the theory of generalised subdifferentials of Clarke \cite{clarke1990optimization} still indicates\footnote{The Fermat-rule $0 \in \subdiff_C [F+K(\freevar, \realopty)](\realoptx)$ holds. Since $F$ is convex and $K(\freevar, \realopty)$ is $C^1$, $\realoptx$ is a regular point of both, so also the subdifferential sum rule holds. We argue $G_*+K(\realopty, \freevar)$ similarly.} the necessity of \eqref{eq:convergence:oc}.

We can alternatively write \eqref{eq:convergence:oc} as
\begin{equation}
    \label{eq:convergence:h}
    0 \in H(\realoptu)
    \defeq
    \begin{pmatrix}
        \subdiff F(\realoptx) + D_x K(\realoptx, \realopty) \\
        \subdiff G_*(\realopty) - D_y K(\realoptx, \realopty)
    \end{pmatrix}.
\end{equation}
If $X$ and $Y$ were Hilbert spaces, we could in principle use the classical \term{proximal point method} \cite{minty-monotone,rockafellar1976monotone} to solve \eqref{eq:convergence:h}: given step length parameters $\tau_k>0$, iteratively solve $\nextu$ from
\begin{equation}
    \label{eq:bregpdps:basicprox}
    0 \in H(\nextu) + \inv\tau_k(\nextu-\thisu).
\end{equation}
If $K$ were bilinear, $H$ would be a so-called monotone operator and convergence of iterates would follow from \cite{rockafellar1976monotone}.
In practise the steps of the method are too expensive to realise as the primal and dual iterates $\nextx$ and $\nexty$ are coupled: generally, one cannot solve one before the other.

Fortunately, the iterates can be decoupled by introducing a \term{preconditioner} that switches $D_x K(\nextx, \nexty)$ on the first line of $H(\nextu)$ to $D_x K(\thisx, \thisy)$. This gives rise to the \term[primal-dual!splitting!proximal]{primal-dual proximal splitting} (\indexalso{PDPS}), introduced in \cite{chambolle2010first,pock2009mumford} for bilinear $K(x,y)=\dualprod{Ax}{y}$. That the PDPS is actually a preconditioned proximal point method was first observed in \cite{he2012convergence}. In the following, we describe its extension from \cite{tuomov-nlpdhgm,tuomov-nlpdhgm-redo,tuomov-nlpdhgm-general} to general $K$ and the general problem \eqref{eq:intro:genprob}.
To simplify the proofs and concepts in them, we work with Bregman divergences, at no cost in Banach spaces.

%%%%%%%%%%%%%%%%%%%%%%%%%%%%%%%%%%%%%%%%%%%%%%%
\subsection{Algorithm formulation}
%%%%%%%%%%%%%%%%%%%%%%%%%%%%%%%%%%%%%%%%%%%%%%%

Given Gâteaux-differentiable functions $J_X: X \to \extR$ and $J_Y: Y \to \extR$ with the corresponding Bregman divergences $B_X \defeq B_{J_X}$ and $B_Y \defeq B_{J_Y}$, we define
\begin{equation}
    \label{eq:bregpdps:j0}
    J^0(x, y)
    \defeq
    J_X(x) + J_Y(y) - K(x, y).
\end{equation}
Introducing the short-hand notation $B^0 \defeq B_{J^0}$, we propose to solve \eqref{eq:convergence:h} through the iterative solution of
\begin{equation}
    \label{eq:bregpdps:alg}
    0 \in H(\nextu) + D_1 B^0(\nextu, \thisu)
\end{equation}
for $\nextu$.
Inserting \cref{eq:convergence:h} and \eqref{eq:bregman:d} for $J=J^0$ as defined in \eqref{eq:bregpdps:j0},
we expand and rearrange this implicitly defined method as:

\begin{alg}{Primal-dual Bregman-proximal splitting (PDBS)}%
    \index{PDBS}%
    \index{primal-dual!splitting!Bregman-proximal}%
    Iteratively over $k \in\N$, solve for $\nextx$ and $\nexty$:
    \begin{equation}
        \label{eq:bregpdps:alg:expanded}
        \!\!\!\!\!
        %\left\{
            \begin{aligned}
                DJ_X(\thisx)-D_x K(\thisx, \thisy) & \in DJ_X(\nextx) + \subdiff F(\nextx)
                \quad\text{and}
                \\
                DJ_Y(\thisy)-D_y K(\thisx, \thisy) & \in DJ_Y(\nexty) + \subdiff G_*(\nexty)-2D_y K(\nextx, \nexty).
            \end{aligned}
        %\right.
    \end{equation}
\end{alg}

We readily obtain $\nextx$ if the inverse of $DJ_X+\tau \subdiff F$ has an analytical closed-form expression. In this case we say that $F$ is \term{prox-simple} with respect to $J_X$.
For $\nexty$, the same is true if $K$ is affine in $y$ and $G_*$ is prox-simple with respect to $J_Y$. If, however, $K$ is not affine in $y$, it is practically unlikely that $\subdiff G_*-2D_y K(\nextx,\freevar)$ would be prox-simple. We will therefore improve the method for general $K$ in \cref{sec:inertial}, after first studying fundamental ideas behind convergence proofs in the following \cref{sec:convergence}.

If $X$ and $Y$ are Hilbert spaces with $J_X=\inv\tau N_X$ and $J_Y=\inv\sigma N_Y$ the standard generating functions divided by some step length parameters $\tau,\sigma>0$, \eqref{eq:bregpdps:alg:expanded} becomes
\begin{alg}{Primal--dual proximal splitting (PDPS)}
    \index{PDPS}%
    \index{primal-dual!splitting!proximal}%
    Iterate over $k \in \N$:
    \begin{equation}
        \label{eq:bregpdps:alg:hilbert}
        %\left\{
            \begin{aligned}
                \nextx & \defeq \prox_{\tau F}(\thisx - \tau \grad_x K(\thisx, \thisy)),
                \\
                \nexty & \defeq \prox_{\sigma[G_*-2K(\nextx, \freevar)]}(\thisy - \sigma\grad_y K(\thisx, \thisy)).
            \end{aligned}
        %\right.
    \end{equation}
\end{alg}%
\noindent
The \term{proximal map} is defined as
\[
    \prox_{\tau F}(x) \defeq \inv{(I+\tau \subdiff F)}(x) = \argmin_{\alt x \in X}\left( \tau F(\alt x)+\frac{1}{2}\norm{\alt x-x}_X^2\right).
\]
When this map has an analytical closed-form expression, we say that $F$ is \term{prox-simple} (without reference to $J_X$).
In finite dimensions, several worked out proximal maps may be found online \cite{chierchia2019proximity} or in the book \cite{beck2017firstorder}.
Some extend directly to Hilbert spaces or by superposition to $L^2$.

\begin{remark}
    \label{rem:bregpdps:nonlin}
    For $K$ affine in $y$, i.e., $K(x,y)=\dualprod{A(x)}{y}$ for some differentiable $A: X \to Y^*$, the dual update of \eqref{eq:bregpdps:alg:hilbert} reduces to
    \[
        \begin{aligned}[t]
        \nexty &
        = \prox_{\sigma G_*}(\thisy + \sigma[2\grad_y K(\nextx,\thisy)-\grad_y K(\thisx, \thisy)])
        \\
        &
        =
        \prox_{\sigma G_*}(\thisy + \sigma[2\grad A(\nextx)-\grad A(\thisx)]).
        \end{aligned}
    \]
    This corresponds to the “linearised” variant of the NL-PDPS of \cite{tuomov-nlpdhgm}. The “exact” variant, studied in further detail in \cite{tuomov-nlpdhgm-redo}, updates
    \[
        \nexty \defeq \prox_{\sigma G_*}(\thisy + \sigma \grad_y K(2\nextx-\thisx, \thisy)).
    \]
    If $K$ is bilinear the two variants are the exactly same PDPS of \cite{chambolle2010first}.
    For $K$ not affine in $y$, the method is neither the generalised PDPS of \cite{tuomov-nlpdhgm-general} nor the version for convex-concave $K$ from \cite{hamedani2018primal}.
\end{remark}

%%%%%%%%%%%%%%%%%%%%%%%%%%%%%%%%%%%%%%%%%%%%%%%
\subsection{Block-adaptation}
\label{sec:bregpdps:block}
%%%%%%%%%%%%%%%%%%%%%%%%%%%%%%%%%%%%%%%%%%%%%%%

\index{primal-dual!splitting!block-adapted|(}%
\index{PDPS!block-adapted|(}%

We now derive a version of the PDBS \eqref{eq:bregpdps:alg:expanded} adapted to the structure of
\[
    F(x)=\sum_{j=1}^m F_j(x_j)\quad\text{and}\quad
    G_*(y)=\sum_{\ell=1}^n G_{\ell*}(y_\ell),
\]
where $x=(x_1,\ldots,x_m)$ and $y=(y_1,\ldots,y_n)$ in the (for simplicity) Hilbert spaces $X=\prod_{j=1}^m X_j$ and $Y=\prod_{\ell=1}^n Y_k$, and $F_j: X_j \to \extR$ and $G_{\ell*}: Y_\ell \to \extR$ are convex, proper, and lower semicontinuous.

For some “blockwise” step length parameters $\tau_j,\sigma_\ell>0$ we take
\[
    J_X(x)=\sum_{j=1}^m \inv\tau_j N_{X_j}(x_j)
    \quad\text{and}\quad
    J_Y(y)=\sum_{\ell=1}^n \inv\sigma_\ell N_{Y_\ell}(y_\ell)
\]
If $K$ is now affine in $y$, observing \cref{rem:bregpdps:nonlin}, \eqref{eq:bregpdps:alg:expanded} readily transforms into:
\begin{alg}{Block-adapted PDPS for $K$ affine in $y$}%
    Iteratively over $k \in \N$, for all $j=1,\ldots,m$ and $\ell=1,\ldots,n$, update:
    \begin{equation}
        \label{eq:bregpdps:alg:block}
        %\left\{
            \begin{aligned}
                \nextx_j & \defeq \prox_{\tau_j F_j}(\thisx_j-\tau_j \grad_{x_j} K(\thisx, \thisy)),
                %\quad (j=1,\ldots,m),
                \\
                \nexty_\ell & \defeq \prox_{\sigma_\ell G_{\ell*}}(\thisy_\ell +  \sigma_\ell[2\grad_{y_\ell} K(\nextx, \thisy) -\grad_{y_\ell}  K(\thisx, \thisy)]).
                %\quad (\ell=1,\ldots,n).
            \end{aligned}
        %\right.
    \end{equation}
\end{alg}

The idea is that the blockwise step length parameters adapt the algorithm to the structure of the problem. We will return their choices in the examples of \cref{sec:convergence:ellipticity:block}.

\begin{fixedimportant}{Performance gains}%
    Correct adaptation of the blockwise step length parameters to the specific problem structure can yield significant performance gains compared to not exploiting the block structure \cite{pock2011iccv,jauhiainen2019gaussnewton,tuomov-nlpdhgm-block}.
\end{fixedimportant}

\begin{remark}
    For bilinear $K$, \eqref{eq:bregpdps:alg:block} is the “diagonally preconditioned” method of \cite{pock2011iccv}, or an unaccelerated non-stochastic variant of the methods in \cite{tuomov-blockcp}.
    For $K$ affine in $y$, \eqref{eq:bregpdps:alg:block} differs from the methods in \cite{tuomov-nlpdhgm-block} by placing the over-relaxation in the dual step outside $K$, compare \cref{rem:bregpdps:nonlin}.
\end{remark}

Recall the saddle-point formulation \eqref{eq:intro:twoblock} for \index{inverse problem}inverse problems with nonlinear forward operators. We can now adapt step lengths to the constituent dual blocks:

\begin{example}
    \label{ex:bregpdps:twoblock}
    Let $A_1 \in C^1(X; Y_1^*)$ and $A_2 \in \linear(X; Y_2^*)$, and suppose the convex functions $G_1: Y_1^* \to \extR$ and $G_2: Y_2^* \to \extR$ have the preconjugates $G_{1*}$ and $G_{2*}$.
    Then we can write the problem
    \[
        \min_{x \in X}~G_1(A_1(x))+G_2(A_2x)+F(x).
    \]
    in the form \eqref{eq:intro:genprob} with $G_*(y_1, y_2)=G_{1*}(y_1)+G_{2*}(y_2)$ and $K(x,y)=\dualprod{A_1(x)}{y_1}+\dualprod{A_2x}{y_2}$. The algorithm \eqref{eq:bregpdps:alg:block} specialises as
    \begin{equation*}
        %\left\{
            \begin{aligned}
                \nextx & \defeq \prox_{\tau F}(\thisx-\tau[\grad A_1(\thisx)^*y_1+A_2^*y_2]),
                \\
                \nexty_1 & \defeq \prox_{\sigma_1 G_{1*}}(\thisy_1 +  \sigma_1[2A_1(\nextx)-A_1(\thisx)]),
                \\
                \nexty_2 & \defeq \prox_{\sigma_2 G_{2*}}(\thisy_2 +  \sigma_2[A_2(2\nextx-\thisx)])
            \end{aligned}
        %\right.
    \end{equation*}
    for some step length parameters $\tau,\sigma_1,\sigma_2>0$.
    We return to their choices and the local neighbourhood of convergence in \cref{ex:bregpdps:twoblock:steps,ex:bregpdps:twoblock:nbd} after developing the necessary convergence theory.
    
\end{example}

\index{primal-dual!splitting!block-adapted|)}%
\index{PDPS!block-adapted|)}%

%%%%%%%%%%%%%%%%%%%%%%%%%%%%%%%%%%%%%%%%%%%%%%%
\section{Convergence theory}
\label{sec:convergence}
%%%%%%%%%%%%%%%%%%%%%%%%%%%%%%%%%%%%%%%%%%%%%%%

We now seek to understand when the basic version \eqref{eq:bregpdps:alg} of the PDBS convergences.
The organisation of this section centres around the \hyperlink{target:roadmap}{three main ingredients} of the convergence proof, as discussed in the Introduction:
\begin{enumerate}[label=(\roman*)]
    \item the three-point identity \eqref{eq:bregman:three-point-identity} employed in the general-purpose estimate of \cref{sec:convergence:fundamental},
    \item\label{item:convergence:ellipticity}
    the \mbox{(semi-)}ellipticity of the algorithm-generating Bregman divergences $B_{J_0}$ for $J^0$ as in \eqref{eq:bregpdps:j0}, verified for several examples in \cref{sec:convergence:ellipticity,sec:convergence:ellipticity:block}, and
    \item\label{item:convergence:growth}
    a second-order growth condition on \eqref{eq:intro:genprob}, verified for several examples in \cref{sec:convergence:second,sec:convergence:second:block}.
\end{enumerate}
With these basic ingredients, we then prove various convergence results in \cref{sec:convergence:iterate,sec:convergence:gaps}.
The usefulness of both \cref{item:convergence:ellipticity,item:convergence:growth} will become apparent from the fundamental estimates and examples of the next \cref{sec:convergence:fundamental}.

%%%%%%%%%%%%%%%%%%%%%%%%%%%%%%%%%%%%%%%%%%%%%%%
\subsection{A fundamental estimate}
\label{sec:convergence:fundamental}
%%%%%%%%%%%%%%%%%%%%%%%%%%%%%%%%%%%%%%%%%%%%%%%

We start with a simple estimate applicable to general methods of the form
\begin{equation}
    \label{eq:abstractbreg:pp}
    \tag{BP}
    0 \in H(\nextu) + D_1 B (\nextu,\thisu)
\end{equation}
for some set-valued $H: U \setto U^*$ and a Bregman divergence $B \defeq B_J$ generated by some Gâteaux-differentiable $J: U \to \R$.
We analyse \eqref{eq:abstractbreg:pp} following the “\indexalso{testing}” ideas introduced in \cite{tuomov-proxtest}, extending them to the Bregman--Banach space setting, however in a simplified constant-metric setting that cannot model accelerated methods.
The \term[gap!generic]{generic gap functional} $\GenGap(\nextu, \optu)$ in the next result models any function value differences available from $H$.
Its non-negativity will provide the basis for the aforementioned second-order growth conditions of \cref{sec:convergence:second,sec:convergence:second:block}.
We provide an example and interpretation after the theorem.

\begin{theorem}
    \label{thm:abstractbreg}
    On a Banach space $U$, let $H: U \setto U^*$, and let $B \defeq B_J$ be generated by a Gâteaux-differentiable $J: U \to \R$.
    Suppose \eqref{eq:abstractbreg:pp} is solvable for $\{\nextu\}_{k \in \N}$ given an initial iterate $u^0 \in U$. Let $N \ge 1$. If for all $k=0,\ldots,N-1$, for some $\optu \in U$ and $\GenGap(\nextu, \optu) \in \R$ the \term{fundamental condition}
    \begin{equation}
        \label{eq:abstractbreg:condition}
        \tag{C}
        \dualprod{\nexxt{h}}{\nextu-\optu}
        \ge
        \GenGap(\nextu, \optu)
        \quad (\nexxt h \in H(\nextu))
    \end{equation}
    holds, then so do the \term[Féjer-monotonicity]{quantitative $\Delta$-Féjer monotonicity}
    \begin{equation}
        \label{eq:abstractbreg:fejer}
        \tag{F}
        B(\optu, \nextu)
        +B(\nextu,\thisu)
        +\GenGap(\nextu, \optu)
        \le
        B(\optu, \thisu)
    \end{equation}
    and the \term{descent inequality}
    \begin{equation}
        \label{eq:abstractbreg:result}
        \tag{D}
        B(\optu, u^N)
        +\sum_{k=0}^{N-1} B(\nextu,\thisu)
        +\sum_{k=0}^{N-1} \GenGap(\nextu, \optu)
        \le
        B(\optu, u^0).
    \end{equation}
\end{theorem}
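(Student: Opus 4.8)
The plan is to apply the three-point identity \eqref{eq:bregman:three-point-identity} once per iteration, which converts the purely algebraic inclusion \eqref{eq:abstractbreg:pp} into the Féjer estimate \eqref{eq:abstractbreg:fejer}, and then to telescope the result to obtain \eqref{eq:abstractbreg:result}. No analytical machinery beyond the identity is needed; the content is entirely in the Bregman geometry.

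First I would exploit the single-valuedness of $D_1 B$. Since $J$ is Gâteaux-differentiable, \eqref{eq:bregman:d} shows that $D_1 B(\nextu, \thisu)$ is a single element of $U^*$. Hence the inclusion \eqref{eq:abstractbreg:pp} is equivalent to the statement that the specific element $\nexxt h \defeq -D_1 B(\nextu, \thisu)$ belongs to $H(\nextu)$. Next I would apply the three-point identity \eqref{eq:bregman:three-point-identity}—which is valid for an arbitrary base point, here taken to be $\optu$—with $x = \nextu$ and $z = \thisu$, giving
\begin{equation*}
    \dualprod{D_1 B(\nextu, \thisu)}{\nextu - \optu}
    = B(\optu, \nextu) - B(\optu, \thisu) + B(\nextu, \thisu).
\end{equation*}
Substituting $D_1 B(\nextu, \thisu) = -\nexxt h$ and then invoking the fundamental condition \eqref{eq:abstractbreg:condition} for this particular $\nexxt h \in H(\nextu)$ yields
\begin{equation*}
    B(\optu, \thisu) - B(\optu, \nextu) - B(\nextu, \thisu)
    = \dualprod{\nexxt h}{\nextu - \optu}
    \ge \GenGap(\nextu, \optu),
\end{equation*}
which rearranges exactly to the quantitative $\Delta$-Féjer monotonicity \eqref{eq:abstractbreg:fejer}.

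Finally, for the descent inequality \eqref{eq:abstractbreg:result} I would sum \eqref{eq:abstractbreg:fejer} over $k = 0, \ldots, N-1$. The terms $B(\optu, \nextu)$ and $B(\optu, \thisu)$ telescope, since $\sum_{k=0}^{N-1} B(\optu, \nextu) - \sum_{k=0}^{N-1} B(\optu, \thisu) = B(\optu, u^N) - B(\optu, u^0)$, leaving precisely \eqref{eq:abstractbreg:result}. I expect no genuine obstacle here: the three-point identity linearises the divergence so that the monotone-type estimate falls out algebraically. The only points demanding care are the sign bookkeeping when replacing $D_1 B(\nextu, \thisu)$ by $-\nexxt h$, and the observation that \eqref{eq:abstractbreg:condition}, although quantified over all $\nexxt h \in H(\nextu)$, applies in particular to the single element selected by the inclusion—so that convexity or non-negativity of $J$ is nowhere required for the two displayed conclusions.
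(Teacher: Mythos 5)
Your argument is correct and is essentially identical to the paper's own proof: both isolate $\nexxt h = -D_1 B(\nextu,\thisu) \in H(\nextu)$ from \eqref{eq:abstractbreg:pp}, pair the inclusion with $\nextu-\optu$, apply the three-point identity \eqref{eq:bregman:three-point-identity}, insert \eqref{eq:abstractbreg:condition} to get \eqref{eq:abstractbreg:fejer}, and telescope to obtain \eqref{eq:abstractbreg:result}. Your added remarks on single-valuedness of $D_1 B$ and on convexity of $J$ being unnecessary are accurate but do not change the route.
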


\begin{proof}
    We can write \eqref{eq:abstractbreg:pp} as
    \begin{equation}
        \label{eq:abstractbreg:pp-expand}
        0=\nexxt h + D_1 B(\nextu, \thisu)
        \quad\text{for some}\quad
        \nexxt h \in H(\nextu).
    \end{equation}
    Testing \eqref{eq:abstractbreg:pp-expand} by applying $\dualprod{\freevar}{\nextu-\optu}$ we obtain
    \begin{equation*}
        0 = \dualprod{\nexxt{h} + D_1 B(\nextu, \thisu)}{\nextu-\optu}.
    \end{equation*}
    We use the three-point identity \eqref{eq:bregman:three-point-identity} to transform this into
    \[
        B(\optu, \thisu) = \dualprod{\nexxt{h}}{\nextu-\optu}
        +B(\optu, \nextu)
        +B(\nextu, \thisu).
    \]
    Inserting \eqref{eq:abstractbreg:condition}, we obtain \eqref{eq:abstractbreg:fejer}.
    Summing the latter over $k=0,\ldots,N-1$ yields \eqref{eq:abstractbreg:result}.
\end{proof}

\begin{example}
    \label{ex:abstractbreg:prox}
    If $H=\subdiff F$ for a convex function $F$, then by the \hyperref[sec:glossary]{definition} of the convex subdifferential, \eqref{eq:abstractbreg:condition} holds with the gap functional
    \[
        \GenGap(u, \optu) = F(u)-F(\optu).
    \]
    If we take $\optu$ is a minimiser of $F$, then the gap functional is non-negative and indeed positive if $u$ is also not minimiser. This is why it is called a gap functional.

    Consider then for some step length parameter $\tau>0$ the proximal point method \eqref{eq:bregpdps:basicprox} in a Hilbert space $X$, that is, taking $B=\inv\tau N_X$,
    \[
        \nextu \defeq \prox_{\tau F}(\thisx),
        \quad\text{equivalently}\quad
        0 \in \subdiff F(\nextu) + \tau(\nextu-\thisu).
    \]
    Then \eqref{eq:abstractbreg:result} reads
    \begin{equation}
        \label{eq:abstractbreg:result-norm}
        \frac{1}{2\tau}\norm{u^N-\optu}_X^2
        +\sum_{k=0}^{N-1} \frac{1}{2} \norm{\nextu-\thisu}_X^2
        +\sum_{k=0}^{N-1} \tau(F(\nextu)-F(\optu))
        \le
        \frac{1}{2}\norm{\optu-u^0}_X^2.
    \end{equation}
    With $\optu$ a minimiser, this clearly forces $F(u^N) \downto F(\optu)$ as $N \upto \infty$, suggesting why we call \eqref{eq:abstractbreg:result} the “descent inequality”.
    
\end{example}

If our problem is non-convex, then we can try to locally ensure second-order growth by imposing $\GenGap(\nextu, \optu) \ge 0$.
Verifying this for the PDBS will be the topic of \cref{sec:convergence:second,sec:convergence:second:block}.
If $B$ is not given by the standard generating function $N_X$ on a Hilbert spaces $X$, then to get from \eqref{eq:abstractbreg:result} an estimate like \eqref{eq:abstractbreg:result-norm} on norms, we can assume the ellipticity or at least semi-ellipticity of the overall Bregman divergence $B$.
Verifying this for $B=B_{J^0}$ with $J^0$ given in \eqref{eq:bregpdps:j0} is our next topic.

%%%%%%%%%%%%%%%%%%%%%%%%%%%%%%%%%%%%%%%%%%%%%%%
\subsection{Ellipticity of the Bregman divergences}
\label{sec:convergence:ellipticity}
%%%%%%%%%%%%%%%%%%%%%%%%%%%%%%%%%%%%%%%%%%%%%%%

As just discussed, for \cref{thm:abstractbreg} to provide estimates that we can use to prove the convergence of the PDBS, we need at least the semi-ellipticity of $B^0$ generated by $J^0$ given in \eqref{eq:bregpdps:j0}.
Deriving simple conditions that ensure such semi-ellipticity or ellipticity is the topic of the present subsection. To do this, we need the “basic” Bregman divergences $B_X$ and $B_Y$ on both spaces $X$ and $Y$ to be elliptic:

\begin{fixedimportant}{Standing assumption}%
    In this subsection, we assume that $B_X$ is $\inv\tau$-elliptic and $B_Y$ is $\inv\sigma$-elliptic for some $\tau,\sigma>0$. This is true for the Hilbert-space PDPS \eqref{eq:bregpdps:alg:hilbert} where $\tau$ and $\sigma$ are the primal and dual step length parameters.
\end{fixedimportant}

The examples that follow the next general lemma will provide improved estimates.

\begin{lemma}
    \label{lemma:bregpdps:distkplus1-for-k-lowerbound}
    Suppose $K \in C^1(X \times Y)$ is Lipschitz-continuously differentiable with the factor $L_{DK}$ in a convex subdomain $\Omega \subset X \times Y$. Then for $u, u' \in \Omega$,
    \begin{equation}
        \label{eq:bregpdps:bkbound-general}
        B_K(u', u) \le \frac{L_{DK}}{2}\norm{u'-u}_{X \times Y}^2.
    \end{equation}
    Consequently, if $B_X$ is $\inv\tau$-elliptic and $B_Y$ is $\inv\sigma$-elliptic and $1 \ge \max\{\tau,\sigma\}L_{DK}$, then $B^0$ is semi-elliptic (elliptic if the the inequality is strict) within $\Omega$.
\end{lemma}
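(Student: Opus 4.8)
The plan is to treat the two assertions separately: first the descent-type upper bound \eqref{eq:bregpdps:bkbound-general} on $B_K$, and then its combination with the ellipticity of $B_X$ and $B_Y$ via linearity of the Bregman divergence in its generating function.

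For the bound on $B_K$, I would start from the definition $B_K(u', u) = K(u') - K(u) - \dualprod{DK(u)}{u'-u}$ and write the increment of $K$ along the segment from $u$ to $u'$ (which lies in $\Omega$ by convexity) using the fundamental theorem of calculus, obtaining $B_K(u', u) = \int_0^1 \dualprod{DK(u + t(u'-u)) - DK(u)}{u'-u} \d t$. Estimating the integrand by the duality pairing inequality together with the Lipschitz bound $\norm{DK(u + t(u'-u)) - DK(u)}_{(X \times Y)^*} \le t L_{DK} \norm{u'-u}_{X \times Y}$ and integrating the resulting $t L_{DK}\norm{u'-u}_{X \times Y}^2$ over $t \in [0,1]$ produces the factor $\tfrac{1}{2}$, which is \eqref{eq:bregpdps:bkbound-general}.

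For the consequence, the key observation is that $B_J$ is linear in $J$, so reading $J_X$ and $J_Y$ as functions on $X \times Y$ depending only on their own argument and using $J^0 = J_X + J_Y - K$ from \eqref{eq:bregpdps:j0} gives $B^0(u', u) = B_X(x', x) + B_Y(y', y) - B_K(u', u)$ for $u = (x,y)$ and $u' = (x', y')$. Into this I would insert the ellipticity lower bounds $B_X(x', x) \ge \tfrac{1}{2\tau}\norm{x'-x}_X^2$ and $B_Y(y', y) \ge \tfrac{1}{2\sigma}\norm{y'-y}_Y^2$ from the standing assumption, together with the just-proved upper bound on $B_K$, and use $\norm{u'-u}_{X \times Y}^2 = \norm{x'-x}_X^2 + \norm{y'-y}_Y^2$ to collect the estimate into $\tfrac{1}{2}(\inv\tau - L_{DK})\norm{x'-x}_X^2 + \tfrac{1}{2}(\inv\sigma - L_{DK})\norm{y'-y}_Y^2$. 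The hypothesis $1 \ge \max\{\tau,\sigma\}L_{DK}$ makes both coefficients non-negative, which gives semi-ellipticity; if the inequality is strict both are positive, and taking $\gamma = \min\{\inv\tau, \inv\sigma\} - L_{DK} > 0$ yields $B^0(u', u) \ge \tfrac{\gamma}{2}\norm{u'-u}_{X \times Y}^2$, i.e.\ $\gamma$-ellipticity in the sense of \eqref{eq:bregman:ellipticity}.

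I do not expect a serious obstacle. The upper bound on $B_K$ is the classical descent lemma, and it transfers to the Banach setting because $K$ is $C^1$ along segments contained in the convex set $\Omega$; the only points needing care are the justification of the integral representation of the increment in a Banach space and the product-norm convention $\norm{(x,y)}_{X \times Y}^2 = \norm{x}_X^2 + \norm{y}_Y^2$, on which the clean separation into independent primal and dual coefficients depends. Both are routine.
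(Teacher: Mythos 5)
Your proposal is correct and follows essentially the same route as the paper: the integral (mean value) representation of $B_K$ with the Lipschitz bound on $DK$ gives \eqref{eq:bregpdps:bkbound-general}, and the additive decomposition $B^0 = B_X + B_Y - B_K$ combined with the ellipticity of $B_X$, $B_Y$ yields the coefficients $\tfrac{1}{2}(\inv\tau - L_{DK})$ and $\tfrac{1}{2}(\inv\sigma - L_{DK})$ exactly as in the paper's proof.
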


\begin{proof}
    By definition, $B_K(u', u) = K(u') - K(u) - \dualprod{DK(u)}{u'-u}$.
    Using the mean value equality in $\R$ with the chain rule and the Cauchy--Schwarz inequality, we get
    \[
        B_K(u', u) = \int_0^1 \dualprod{DK(u+t(u'-u))-DK(u)}{u'-u} \d t
        \le \int_0^1 t L_{DK} \norm{u'-u}_{X \times Y}^2 \d t.
    \]
    Calculating the last integral yields \eqref{eq:bregpdps:bkbound-general}.

    For the \mbox{(semi-)}ellipticity, we need $B^0(u, u') \ge \frac{\epsilon}{2}\norm{u-u'}_{X \times Y}^2$ for some $\epsilon > 0$ ($\epsilon=0$) and all $u, u' \in \Omega$.
    Since $B_X$ and $B_Y$ are $\inv\tau$- and $\inv\sigma$-elliptic, we have
    \begin{equation}
        \label{eq:bregpdps:bxy-steplength-ellipticity}
        \begin{aligned}[t]
        B^0(u', u) &= B_X(x', x)+B_Y(y',y)-B_K(u', u)
        \\
        & \ge
        \frac{1}{2\tau}\norm{x'-x}_X^2 + \frac{1}{2\sigma}\norm{y'-y}_Y^2 - B_K(u' u).
        \end{aligned}
    \end{equation}
    Using \eqref{eq:bregpdps:bkbound-general}, therefore
    $
        B^0(u', u) \ge
        \tfrac{\inv\tau-L_{DK}}{2}\norm{x'-x}_X^2
        +
        \tfrac{\inv\sigma-L_{DK}}{2}\norm{y'-y}_Y^2.
    $
    Thus $B^0$ is $\epsilon$-elliptic when $\inv\tau,\inv\sigma \ge L_{DK}+\epsilon$.
    This gives the claim.
\end{proof}

We now provide several examples of ellipticity. In practise, to guarantee ellipticity, we would choose $\tau,\sigma>0$ to satisfy the stated conditions.

\begin{example}
    \label{ex:bregpdps:bk0pos-nonconvex-e}
    Suppose $K(x,y)=E(x)$ with $D E$ $L_{DE}$-Lipschitz in $\Omega=X \times Y$. Then $L_{DK}=L_{DE}$, so we recover the standard-for-gradient-descent step length bound $1 \ge \tau L_{DE}$ for $B^0$ to be semi-elliptic in $\Omega$ (elliptic if the inequality is strict).
    %\todo{Factor two?}
\end{example}

\begin{example}
    \label{ex:bregpdps:bk0pos-linear}
    If $K(x,y)=\dualprod{Ax}{y}$ for $A \in \linear(X; Y^*)$, then $B^0$ is elliptic under the standard-for-PDPS \cite{chambolle2010first} step length condition
    \[
        1 > \tau\sigma\norm{A}^2.
    \]
    Indeed, 
    \[
        \dualprod{DK(u+t(u'-u))-DK(u)}{u'-u}
        =2t\dualprod{A(x-x')}{y-y'}.
    \]
    Therefore, taking any $w>1$, we easily improve \eqref{eq:bregpdps:bkbound-general} to
    \begin{equation}
        \label{eq:bregpdps:bk0bound-linear}
        \begin{aligned}[t]
        B_K(u', u) & \le \norm{A}\norm{x'-x}_X \norm{y'-y}_Y
        \\
        &
        \le
        \frac{w \norm{A}}{2}\norm{x'-x}_{X}^2
        +\frac{\inv w \norm{A}}{2}\norm{y'-y}_{Y}^2
        \quad
        (u, u' \in X \times Y).
        \end{aligned}
    \end{equation}
    By \eqref{eq:bregpdps:bxy-steplength-ellipticity}, $B^0$ is therefore $\epsilon$-elliptic
    if $\inv\tau \ge w\norm{A}+\epsilon$ and $\inv\sigma \ge \inv w\norm{A}+\epsilon$.
    Taking $w=\sigma \norm{A}/(1-\sigma\epsilon)$ this holds if $1 \ge \tau\sigma\norm{A}^2/(1-\sigma\epsilon)+\tau\epsilon$. Since $\epsilon>0$ was arbitrary, the claimed step length condition follows.
\end{example}

\begin{example}
    \label{ex:bregpdps:bk0pos-liny}
    Suppose $K(x,y)=\dualprod{A(x)}{y}$ with $A$ and $D A$ Lipschitz with the respective factors $L_A, L_{DA} \ge 0$. Then $B^0$ is elliptic within $\Omega = X \times \B(0, \rho_y)$ if
    \[
        1 > \tau\sigma L_A^2+\tau \frac{L_{DA}\rho_y}{2}.
    \]
    Indeed, for any $w>1$, using the mean value equality as in the proof of \cref{lemma:bregpdps:distkplus1-for-k-lowerbound}, we deduce
    \begin{equation}
        \label{eq:bregpdps:bk0bound-liny}
        \begin{aligned}[t]
        B_K(u', u)
        &
        =
        \dualprod{A(x')-A(x)}{y'}
        -\dualprod{DA(x)(x'-x)}{y}
        \\
        &
        =
        \dualprod{A(x')-A(x)}{y'-y}
        +\dualprod{A(x')-A(x)-DA(x)(x'-x)}{y}
        \\
        &
        \le
        L_A\norm{x'-x}_X\norm{y'-y}_Y
        +\tfrac{L_{DA}\norm{y'}}{2}\norm{x'-x}_X^2\\
        &
        \le
        \frac{w L_A + L_{DA}\norm{y}}{2} \norm{x'-x}_X^2
        +\frac{\inv w L_A}{2}\norm{y'-y}_Y^2.
        \end{aligned}
    \end{equation}
    If $\rho_y>0$ is such that $\norm{y} \le \rho_y$, taking $w=\sigma L_A/(1-\sigma\epsilon)$, similarly to  \cref{ex:bregpdps:bk0pos-linear} we deduce the claimed bound.
\end{example}

We can combine the examples above:

\begin{example}
    \label{ex:bregpdps:bk0pos-combo}
    As in \cref{ex:bregpdps:twoblock}, take $K(x,(y_1,y_2))=\dualprod{A_1(x)}{y_1}+\dualprod{A_2 x}{y_2}$ with $A_1 \in C^1(X; Y_1^*)$ and $A_2 \in \linear(X; Y_2^*)$.
    Then $B^0$ is elliptic within $\Omega = X \times \B(0, \rho_y)$ if
    \[
        1 > \tau\sigma(L_{A_1}^2+\norm{A_2}^2)+\tau \frac{L_{DA_1}\rho_{y_1}}{2}.
    \]
    Indeed, we bound $B_K$ by summing \eqref{eq:bregpdps:bk0bound-linear} for $A_1$ and \eqref{eq:bregpdps:bk0bound-liny} for $A_2$. This yields for any $w_1, w_2>0$ the estimate
    \begin{equation}
        \label{eq:bregpdps:bk0pos-combo-bk}
        \begin{aligned}[t]
            B_K(u', u)
            &
            \le
            \frac{w_1 L_{A_1} + L_{DA_1}\norm{y_1}}{2} \norm{x-x'}_X^2
            +\frac{\inv w_1 L_{A_1}}{2}\norm{y_1'-y_1}_Y^2
            \\\MoveEqLeft[-1]
            +\frac{w_2 \norm{A_2}}{2}\norm{x'-x}_{X}^2
            +\frac{\inv w_2 \norm{A_2}}{2}\norm{y_2'-y_2}_{Y_2}^2.
        \end{aligned}
    \end{equation}
    Taking $w_1=\sigma L_{A_1}/(1-\sigma\epsilon)$ and $w_2=\sigma \norm{A_2}/(1-\sigma\epsilon)$, and using \eqref{eq:bregpdps:bxy-steplength-ellipticity}, we deduce the claimed ellipticity for small enough $\epsilon>0$.
\end{example}

\begin{remark}
    \label{rem:convergence:ellipticity:bounds}
    In  \cref{ex:bregpdps:bk0pos-liny,ex:bregpdps:bk0pos-combo} we needed a bound on the dual variable $y$. In the latter, as an improvement, this was only needed on the subspace $Y_1$ of non-bilinearity.
    An ad-hoc solution is to introduce the bound into the problem. In the Hilbert case, \cite{tuomov-nlpdhgm-redo,tuomov-nlpdhgm-general} secure such bounds by taking the primal step length $\tau$ small enough and arguing as in \cref{thm:abstractbreg} individually on the primal and dual iterates.
\end{remark}

%%%%%%%%%%%%%%%%%%%%%%%%%%%%%%%%%%%%%%%%%%%%%%%
\subsection{Ellipticity for block-adapted methods}
\label{sec:convergence:ellipticity:block}
%%%%%%%%%%%%%%%%%%%%%%%%%%%%%%%%%%%%%%%%%%%%%%%

\index{primal-dual!splitting!block-adapted|(}%
\index{PDPS!block-adapted|(}%

We now study ellipticity for block-adapted methods. The goal is to obtain faster convergence by adapting the blockwise step length parameters to the problem structure (connections between blocks) and the local (blockwise) properties of the problem.

\begin{fixedimportant}{Standing assumption}%
    In this subsection, we assume $F$, $G_*$, $J_X$ and $J_Y$ to have the form of \cref{sec:bregpdps:block}. In particular, $X$ and $Y$ are (products of) Hilbert spaces, and
    \begin{equation}
        \label{eq:bregpdps:block0-bxy}
        B^0(u', u) = \sum_{j=1}^m \frac{1}{2\tau_j} \norm{x_j'-x_j}_{X_j}^2
        +\sum_{\ell=1}^n \frac{1}{2\sigma_\ell} \norm{y_\ell'-y_\ell}_{Y_\ell}^2 - B_K(u', u).
    \end{equation}
\end{fixedimportant}

We start by refining the two-block \cref{ex:bregpdps:bk0pos-combo} to be adapted to the blocks:

\begin{example}
    \label{ex:bregpdps:twoblock:steps}
    Let  $K(x,(y_1,y_2))=\dualprod{A_1(x)}{y_1}+\dualprod{A_2 x}{y_2}$ with $A_1 \in C^1(X; Y_1^*)$ and $A_2 \in \linear(X; Y_2^*)$ as in \cref{ex:bregpdps:bk0pos-combo,ex:bregpdps:twoblock}. Write $\tau=\tau_1$.
    Using \eqref{eq:bregpdps:bk0pos-combo-bk} in \eqref{eq:bregpdps:block0-bxy} for $m=1$ and $n=2$ with \eqref{eq:bregpdps:bk0pos-combo-bk}, we see $B^0$ to be $\epsilon$-elliptic within $\Omega=X \times \B(0, \rho_{y_1}) \times Y_2$ if $\inv\tau \ge w_1 L_{A_1} + L_{DA_1}\rho_{y_1} + w_2\norm{A_2} + \epsilon$ and $\inv\sigma_1 \ge \inv w_1 L_{A_1}$ as well as $\inv\sigma_2 \ge \inv w_2\norm{A_2} + \epsilon$.  Taking $w_1=\sigma_1 L_{A_1}/(1-\sigma_1\epsilon)$ and $w_2=\sigma_2 \norm{A_2}/(1-\sigma_2\epsilon)$, $B^0$ is therefore elliptic (some $\epsilon>0$) within $\Omega$ if
    $
        1 > \tau(\sigma_1 L_{A_1}^2+\sigma_2 \norm{A_2}^2)+\tau \frac{L_{DA_1}\rho_{y_1}}{2}.
    $
\end{example}

\begin{example}
    In \cref{ex:bregpdps:twoblock:steps}, if both $A_1 \in \linear(X; Y_1^*)$ and $A_2 \in \linear(X; Y_2^*)$, then $B^0$ is elliptic within $\Omega=X\times Y_1 \times Y_2$ if $1 > \tau(\sigma_1 \norm{A_1}^2+\sigma_2 \norm{A_2}^2)$.
\end{example}

\begin{example}
    \label{ex:bregpdps:block0}
    Suppose we can write $K(x,y)=\sum_{j=1}^m \sum_{\ell=1}^n K_{j\ell}(x_j, y_\ell)$ with each $K_{j\ell}$ Lipschitz-continuously differentiable with the factor $L_{j\ell}$.
    Following  \cref{lemma:bregpdps:distkplus1-for-k-lowerbound},
    \begin{equation}
        \label{eq:bregpdps:block0-k}
        B_K(u', u) \le \sum_{j=1}^m \sum_{\ell=1}^n \frac{L_{j\ell}}{2}(\norm{x_j'-x_j}^2+\norm{y_\ell'+y_\ell}^2).
    \end{equation}
    Consequently, using \eqref{eq:bregpdps:block0-bxy}, we see that $B^0$ is $\epsilon$-elliptic if $1 \ge \tau_j(\sum_{\ell=1}^n L_{j\ell}+\epsilon)$ and $1 \ge \sigma_\ell(\sum_{j=1}^n L_{j\ell}+\epsilon)$ for all $j=1,\ldots,m$ and $\ell=1,\ldots,n$.
\end{example}

\begin{example}
    \label{ex:bregpdps:block0linear}
    If $K(x,y)=\sum_{j=1}^m \sum_{\ell=1}^m \dualprod{A_{j\ell}x_j}{y_\ell}$ for some $A_{j\ell} \in \linear(X_j; Y_\ell^*)$, then following \cref{ex:bregpdps:bk0pos-linear}, for arbitrary $w_{j\ell}>0$,
    \[
        \begin{aligned}[t]
        B_K(u', u) & \le \sum_{j=1}^m\sum_{\ell=1}^m \norm{A_{j\ell}}\norm{x_j'-x_j}\norm{y_j'-y_j}
        \\ &
        \le \sum_{j=1}^m\sum_{\ell=1}^n
        \left( \frac{w_{j\ell}\norm{A_{j\ell}}}{2}\norm{x_j'-x_j}^2
                +\frac{\inv w_{j\ell}\norm{A_{j\ell}}}{2}\norm{y_\ell'-x_\ell}^2\right).
        \end{aligned}
    \]
    Using \eqref{eq:bregpdps:block0-bxy}, $B^0$ is thus $\epsilon$-elliptic if $1 \ge \tau_j(\epsilon+\sum_{\ell=1}^n w_{j\ell}\norm{A_{j\ell}})$ and  $1 \ge \sigma_\ell(\epsilon+\sum_{j=1}^m \inv w_{j\ell}\norm{A_{j\ell}})$ for all $j=1,\ldots,m$ and $\ell=1,\ldots,n$. We can use the factors $w_{j\ell}$ to adapt the algorithm to the different blocks for potentially better convergence.
\end{example}

\index{primal-dual!splitting!block-adapted|)}%
\index{PDPS!block-adapted|)}%

%%%%%%%%%%%%%%%%%%%%%%%%%%%%%%%%%%%%%%%%%%%%%%%
\subsection{Non-smooth second-order conditions}
\label{sec:convergence:second}
%%%%%%%%%%%%%%%%%%%%%%%%%%%%%%%%%%%%%%%%%%%%%%%

We now study conditions for \eqref{eq:abstractbreg:condition} to hold with $\GenGap(\freevar,\optu) \ge 0$.
We start by writing out the condition for the PDBS.

\begin{lemma}
    \label{lemma:bregpdps:convergence}
    Let $\optu=(\optx,\opty) \in X \times Y$ and suppose for some $\GenGap(u, \optu) \in \R$ and a neighbourhood $\Omega_\optu \subset X \times Y$ that for all $u=(x,y) \in \Omega_\optu$, $x^* \in \subdiff F(x)$, and $y^* \in \subdiff G_*(y)$,
    \begin{equation}
        \label{eq:bregpdps:condition2}
        \tag{C$^2$}
        \dualprod{x^*+D_x K(x, y)}{x-\optx}
        +\dualprod{y^*-D_y K(x, y)}{y-\opty}
        \ge
        \GenGap(u, \optu).
    \end{equation}
    Let $\{\nextu\}_{k \in \N}$ be generated by the PDBS \eqref{eq:bregpdps:alg:expanded} for some $u^0 \in X \times Y$, and suppose $\{\thisu\}_{k \in \N} \subset \Omega_\optu$.
    Then with $B=B^0$ the fundamental condition \eqref{eq:abstractbreg:condition} and the quantitative $\Delta$-Féjer monotonicity \eqref{eq:abstractbreg:fejer} hold for all $k \in \N$, and the descent inequality \eqref{eq:abstractbreg:result} holds for all $N \ge 1$.
\end{lemma}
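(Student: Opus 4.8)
The plan is to recognise the PDBS as an instance of the abstract Bregman-proximal iteration \eqref{eq:abstractbreg:pp} and then invoke \cref{thm:abstractbreg}, so that the only substantive task is verifying the fundamental condition \eqref{eq:abstractbreg:condition} from the hypothesis \eqref{eq:bregpdps:condition2}. By the derivation preceding \eqref{eq:bregpdps:alg:expanded}, the PDBS is precisely \eqref{eq:bregpdps:alg}, i.e.\ $0 \in H(\nextu) + D_1 B^0(\nextu, \thisu)$ with $H$ as in \eqref{eq:convergence:h} and $B^0 = B_{J^0}$ generated by the Gâteaux-differentiable $J^0$ of \eqref{eq:bregpdps:j0} (differentiable since $J_X$, $J_Y$ are and $K \in C^1$). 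Taking $U = X \times Y$ and $B = B^0$, this is exactly \eqref{eq:abstractbreg:pp}; its solvability for $\{\nextu\}_{k\in\N}$ is part of the hypothesis.

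First I would fix $k$ and an arbitrary $\nexxt h \in H(\nextu)$. By the definition of $H$ in \eqref{eq:convergence:h}, such an element is of the form $\nexxt h = (x^* + D_x K(\nextx,\nexty),\, y^* - D_y K(\nextx,\nexty))$ for some $x^* \in \subdiff F(\nextx)$ and $y^* \in \subdiff G_*(\nexty)$. Splitting the dual pairing over the primal and dual components then gives
\[
    \dualprod{\nexxt h}{\nextu - \optu}
    = \dualprod{x^* + D_x K(\nextx,\nexty)}{\nextx - \optx}
    + \dualprod{y^* - D_y K(\nextx,\nexty)}{\nexty - \opty},
\]
which is exactly the left-hand side of \eqref{eq:bregpdps:condition2} evaluated at $u = \nextu$ with subgradients $x^*, y^*$.

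Since $\{\thisu\}_{k \in \N} \subset \Omega_\optu$, every iterate $\nextu$ lies in $\Omega_\optu$, so \eqref{eq:bregpdps:condition2} applies and bounds the right-hand side below by $\GenGap(\nextu, \optu)$. As $\nexxt h \in H(\nextu)$ was arbitrary, this is precisely the fundamental condition \eqref{eq:abstractbreg:condition} for $B = B^0$, holding for every $k$. Applying \cref{thm:abstractbreg} with this $\optu$ and gap functional---once for each $N \ge 1$---then yields the quantitative $\Delta$-Féjer monotonicity \eqref{eq:abstractbreg:fejer} for all $k$ and the descent inequality \eqref{eq:abstractbreg:result} for all $N \ge 1$.

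I do not expect a genuine obstacle: this is a reduction lemma whose content is the identification of the PDBS with \eqref{eq:abstractbreg:pp} together with the matching of \eqref{eq:bregpdps:condition2} against \eqref{eq:abstractbreg:condition}. The two points needing care are that \eqref{eq:abstractbreg:condition} must be checked for \emph{every} selection $\nexxt h \in H(\nextu)$, which is why \eqref{eq:bregpdps:condition2} is assumed to hold simultaneously for all subgradients $x^* \in \subdiff F(x)$, $y^* \in \subdiff G_*(y)$; and that membership of the whole orbit in $\Omega_\optu$ is exactly what licenses the pointwise application of \eqref{eq:bregpdps:condition2} at each $\nextu$.
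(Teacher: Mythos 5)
Your proposal is correct and follows essentially the same route as the paper's own proof: identify the PDBS with the abstract iteration \eqref{eq:abstractbreg:pp} for $B=B^0$, expand a generic $\nexxt h \in H(\nextu)$ via \eqref{eq:convergence:h} to see that \eqref{eq:abstractbreg:condition} is exactly \eqref{eq:bregpdps:condition2} evaluated at $u=\nextu$, and then invoke \cref{thm:abstractbreg}. Your added remarks on quantifying over all selections $\nexxt h$ and on the role of the orbit lying in $\Omega_\optu$ are accurate and consistent with the paper.
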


\begin{proof}
    \Cref{thm:abstractbreg} proves \eqref{eq:abstractbreg:fejer} and \eqref{eq:abstractbreg:result} if we show \eqref{eq:bregpdps:condition2}.
    For $H$ in \eqref{eq:convergence:h}, we have
    \begin{equation*}
        \nexxt h =
        \begin{pmatrix}
            x_{k+1}^* +D_x K(\nextx, \nexty) \\
            y_{k+1}^* - D_y K(\nextx, \nexty)
        \end{pmatrix}
        \in H(\nextu)
        \quad\text{with}\quad%
        \left\{
        \begin{array}{rl}
            x_{k+1}^* & \in \subdiff F(\nextx),
            \\
            y_{k+1}^* & \in \subdiff G_*(\nexty).
        \end{array}
        \right.
    \end{equation*}
    Thus \eqref{eq:abstractbreg:condition} expands as \eqref{eq:bregpdps:condition2} for $u=\nextu$ and $(x^*,y^*)=(x_{k+1}^*,y_{k+1}^*)$.
\end{proof}

In \cref{sec:convergence:gaps} on the convergence of gap functionals, we will consider general $\optu$ in \cref{eq:bregpdps:condition2}. For the moment, we however fix a root $\optu=\realoptu \in \inv H(0)$. Then
\begin{equation}
    \label{eq:bregpdps:ocstar}
    0 =
    \begin{pmatrix}
        \realoptx^* + D_x K(\realoptx, \realopty) \\
        \realopty^* - D_y K(\realoptx, \realopty)
    \end{pmatrix} \in H(\realoptu)
    \quad\text{with}\quad%
    \left\{
    \begin{array}{rl}
        \realoptx^* & \in \subdiff F(\realoptx),
        \\
        \realopty^* & \in \subdiff G_*(\realopty).
    \end{array}
    \right.
\end{equation}
Since we assume $F$ and $G_*$ to be convex, their subdifferentials are monotone. When $K$ is not convex-concave, and to obtain strong convergence of iterates even when it is, we will  need some strong monotonicity of the subdifferentials, but only \emph{at} a solution.
Specifically, for $\gamma > 0$, we say that $T: X \setto X^*$ is \term[monotone!strongly]{$\gamma$-strongly monotone} \emph{at} $\realoptx$ for $\realoptx^* \in T(\realoptx)$ if
\begin{equation}
    \label{eq:bregpdps:strongly-monotone}
    \dualprod{x^*-\realoptx^*}{x-\realoptx} \ge \gamma\norm{x-\realopt x}_X^2
    \quad (x \in X,\, x^* \in T(x)).
\end{equation}
If $\gamma=0$, we drop the word “strong”.
For $T=\subdiff F$, \eqref{eq:bregpdps:strongly-monotone} follows from the $\gamma$-strong subdifferentiability of $F$.

\begin{fixedimportant}{Standing assumption}%
    Throughout the rest of this subsection, we assume \eqref{eq:bregpdps:ocstar} to hold and that $\subdiff F$ is ($\gamma_F$-strongly) monotone at $\realoptx$ for $\realoptx^*$, and $\subdiff G_*$ is ($\gamma_{G_*}$-strongly) monotone at $\realopty$ for $\realopty^*$.
\end{fixedimportant}

\begin{lemma}
    \label{lemma:bregpdps:second-order-cond}
    %Let $\gamma_F, \gamma_{G_*} \ge 0$ be the factors of (strong) monotonicity of $F$ at $\realoptx$ and $G_*$ at $\realopty$.
    The nonsmooth second-order growth condition \eqref{eq:bregpdps:condition2} holds provided
    \begin{gather}
        \label{eq:bregpdps:second-order-cond-symbreg}
        \gamma_F\norm{x-\realoptx}^2
        +\gamma_{G_*}\norm{y-\realopty}^2
        \ge
        B_K(\realoptu,u)+B_K(u, \realoptu)
        + \GenGap(u, \realoptu)
        \quad (u \in \Omega_\optu),
        \\
        \shortintertext{equivalently}
        \label{eq:bregpdps:second-order-cond-dk}
        \tag{\ref*{eq:bregpdps:second-order-cond-symbreg}$^\prime$}
        \gamma_F\norm{x-\realoptx}^2
        +\gamma_{G_*}\norm{y-\realopty}^2
        \ge
        a_K(\realoptu,u)+a_K(u, \realoptu)
        + \GenGap(u, \realoptu)
        \quad (u \in \Omega_\optu)
        \\
        \shortintertext{for}
        \label{eq:bregpdps:dk-def}
        a_K(u, \optu) \defeq K(x,y) - K(\optx, \opty) + \dualprod{D_x K(x, y)}{\optx-x} + \dualprod{D_y K(\optx,\opty)}{\opty-y}.
    \end{gather}
\end{lemma}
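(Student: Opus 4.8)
The plan is to start from the fundamental condition \eqref{eq:bregpdps:condition2} with $\optu$ specialised to the fixed root $\realoptu$, and to show that the two standing assumptions—the optimality relation \eqref{eq:bregpdps:ocstar} and the strong monotonicity of $\subdiff F$ and $\subdiff G_*$ \emph{at} the solution—convert its left-hand side into the quadratic lower bound $\gamma_F\norm{x-\realoptx}^2+\gamma_{G_*}\norm{y-\realopty}^2$ plus a remainder built purely from gradient differences of $K$. Identifying that remainder with $a_K(\realoptu,u)+a_K(u,\realoptu)$ is exactly what yields \eqref{eq:bregpdps:second-order-cond-dk}, and the equivalence with \eqref{eq:bregpdps:second-order-cond-symbreg} is then a matter of expanding the two divergences.

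Concretely, I would first insert $\realoptx^*=-D_x K(\realoptx,\realopty)$ and $\realopty^*=D_y K(\realoptx,\realopty)$ from \eqref{eq:bregpdps:ocstar} and split the primal pairing in \eqref{eq:bregpdps:condition2} as
\[
\dualprod{x^*+D_x K(x,y)}{x-\realoptx}=\dualprod{x^*-\realoptx^*}{x-\realoptx}+\dualprod{D_x K(x,y)-D_x K(\realoptx,\realopty)}{x-\realoptx},
\]
and symmetrically for the dual pairing, where the leftover gradient-difference term enters with the opposite sign because of the $-D_y K$ appearing in \eqref{eq:bregpdps:condition2}. Applying \eqref{eq:bregpdps:strongly-monotone} to the two subdifferential differences then bounds $\dualprod{x^*-\realoptx^*}{x-\realoptx}\ge\gamma_F\norm{x-\realoptx}^2$ and $\dualprod{y^*-\realopty^*}{y-\realopty}\ge\gamma_{G_*}\norm{y-\realopty}^2$. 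What is left is precisely the pairing of $D_x K(x,y)-D_x K(\realoptx,\realopty)$ and $D_y K(x,y)-D_y K(\realoptx,\realopty)$ against $x-\realoptx$ and $y-\realopty$, carrying the saddle sign pattern.

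The next step is to recognise this remainder as a symmetrised $K$-divergence. Expanding \eqref{eq:bregpdps:dk-def}, the $K$-value terms in $a_K(\realoptu,u)+a_K(u,\realoptu)$ cancel and leave exactly these gradient-difference pairings, so the bound on \eqref{eq:bregpdps:condition2} rearranges into \eqref{eq:bregpdps:second-order-cond-dk}. The stated equivalence with \eqref{eq:bregpdps:second-order-cond-symbreg} I would settle by the same kind of computation, comparing $a_K(\realoptu,u)+a_K(u,\realoptu)$ with $B_K(\realoptu,u)+B_K(u,\realoptu)$ term by term, using the definition of $B_K$ from \cref{lemma:bregpdps:distkplus1-for-k-lowerbound}.

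I expect the delicate point to be precisely this bookkeeping of the saddle signs: the dual variable enters \eqref{eq:bregpdps:condition2} through $-D_y K$, whereas $B_K$ and $a_K$ are written with the full gradient $DK=(D_x K,D_y K)$, so one must track carefully which gradient is evaluated at $\realoptu$ and which at $u$, and with which relative sign, in order for the primal and dual contributions to assemble correctly. Pinning these signs down—so that the surviving quadratic form on the left is exactly $\gamma_F\norm{x-\realoptx}^2+\gamma_{G_*}\norm{y-\realopty}^2$ and the right-hand remainder is the intended symmetrised divergence—is the only genuine content; once that matching is verified, both \eqref{eq:bregpdps:second-order-cond-symbreg} and \eqref{eq:bregpdps:second-order-cond-dk} follow by rearrangement, and hence so does \eqref{eq:bregpdps:condition2}.
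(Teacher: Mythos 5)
Your core argument is the paper's own: specialise \eqref{eq:bregpdps:condition2} to $\optu=\realoptu$, insert $\realoptx^*=-D_x K(\realoptx,\realopty)$ and $\realopty^*=D_y K(\realoptx,\realopty)$ from \eqref{eq:bregpdps:ocstar}, bound the subdifferential differences below via \eqref{eq:bregpdps:strongly-monotone}, and identify the leftover gradient pairings with $a_K(\realoptu,u)+a_K(u,\realoptu)$. That identification is right: the $K$-values cancel and
\[
    a_K(\realoptu,u)+a_K(u,\realoptu)
    = \dualprod{D_x K(u)-D_x K(\realoptu)}{\realoptx-x}
    + \dualprod{D_y K(u)-D_y K(\realoptu)}{y-\realopty},
\]
which is exactly the remainder your splitting produces, so \eqref{eq:bregpdps:second-order-cond-dk} implies \eqref{eq:bregpdps:condition2} as claimed.

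The step you defer---settling the ``equivalence'' with \eqref{eq:bregpdps:second-order-cond-symbreg} by a term-by-term comparison---would not come out the way you expect, and this is precisely where your own warning about saddle signs bites. With the paper's definition $B_K(u',u)=K(u')-K(u)-\dualprod{DK(u)}{u'-u}$ one computes
\[
    B_K(\realoptu,u)+B_K(u,\realoptu)
    = \dualprod{D_x K(u)-D_x K(\realoptu)}{x-\realoptx}
    + \dualprod{D_y K(u)-D_y K(\realoptu)}{y-\realopty},
\]
which differs from $a_K(\realoptu,u)+a_K(u,\realoptu)$ by $2\dualprod{D_x K(u)-D_x K(\realoptu)}{x-\realoptx}$: the $x$-block enters with the opposite sign, so the two conditions are not literally equivalent, and only the $a_K$ form drops out of the monotonicity computation. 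This is a defect of the lemma statement rather than of your strategy---the paper's one-line proof waves at ``the definitions of $B_K$ and $a_K$'' in exactly the same way---but if you actually carry out the comparison you propose, you should record the discrepancy rather than assert equivalence. (It is harmless in the applications: the Lipschitz bound used in \cref{ex:bregpdps:second-order-cond:general-lipschitz} controls either symmetrised sum equally.)
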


Note that \eqref{eq:bregpdps:second-order-cond-symbreg} involves the \term[Bregman divergence!symmetrised]{symmetrised Bregman divergence} $B_K^S(u, u') \defeq B_K(u, u') + B_K(u', u)$ generated by $K$.

\begin{proof}
    Inserting the zero of \eqref{eq:bregpdps:ocstar} in \eqref{eq:bregpdps:condition2}, we rewrite the latter as
    \begin{multline*}
        %\label{eq:bregpdps:second-order-cond}
        \dualprod{x^*- \realoptx^*}{x-\realoptx}
        +\dualprod{y^*-\realopty^*}{y-\realopty}
        \ge
        \dualprod{D_x K(x,y)-D_x K(\realoptx, \realopty)}{\realoptx-x}
        \\
        +\dualprod{D_y K(x,y)-D_y K(\realoptx, \realopty)}{y-\realopty}
        + \GenGap(\nextu, \realoptu).
    \end{multline*}
    Using the assumed strong monotonicities, and the definitions of $B_K$ and $a_K$, this is immediately seen to hold when \eqref{eq:bregpdps:second-order-cond-symbreg} or \eqref{eq:bregpdps:second-order-cond-dk} does.
\end{proof}

%The following examples on the satisfaction of \eqref{eq:bregpdps:condition2} can be combined.

\begin{example}
    \label{ex:bregpdps:second-order-cond:bilinear}
    If $K$ is convex-concave, the next \cref{lemma:bregpdps:dk} and \cref{lemma:bregpdps:second-order-cond} prove \eqref{eq:bregpdps:condition2} for
    \[
        \GenGap(u, \realoptu)=\gamma_F\norm{x-\realoptx}^2+\gamma_{G_*}\norm{y-\realopty}^2 \ge 0
        \quad\text{within}\quad
        \Omega_\realoptu=X \times Y.
    \]
    This is in particular true for $K(x,y)=\dualprod{Ax}{y}+E(x)$ with $A \in \linear(X; Y^*)$ and $E \in C^1(X)$ convex.
\end{example}

\begin{lemma}
    \label{lemma:bregpdps:dk}
    Suppose $K: X \times Y \to \R$ is Gâteaux-differentiable and convex-concave. Then $a_K(u, \optu) \le 0$ and $B_K^S(u, \optu) \le 0$ for all $u, \optu \in X \times Y$.
\end{lemma}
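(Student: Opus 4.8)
The plan is to prove the single inequality $a_K(u,\optu)\le 0$ and to deduce the claim about the symmetrised Bregman divergence from it. Indeed, by the equivalence of \eqref{eq:bregpdps:second-order-cond-symbreg} and \eqref{eq:bregpdps:second-order-cond-dk} recorded in \cref{lemma:bregpdps:second-order-cond}, the symmetrised Bregman divergence coincides with the symmetrised $a_K$, that is $B_K^S(u,\optu)=a_K(u,\optu)+a_K(\optu,u)$. Granting this, once $a_K\le 0$ is established for \emph{every} ordered pair of arguments, the bound $a_K(u,\optu)\le 0$ is immediate, and $B_K^S(u,\optu)\le 0$ follows by applying the bound to the two orderings $(u,\optu)$ and $(\optu,u)$ and summing. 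The whole lemma thus reduces to the first inequality.

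To prove $a_K(u,\optu)\le 0$ with $a_K$ as in \eqref{eq:bregpdps:dk-def}, I would introduce the intermediate point $(\optx,y)$, at which the two partial gradients appearing in \eqref{eq:bregpdps:dk-def} are ``aimed''. Since $K$ is convex in its first argument, the gradient inequality for the convex map $x'\mapsto K(x',y)$, taken at the base point $x$ and evaluated at $\optx$, gives
\[
    K(x,y)+\dualprod{D_x K(x,y)}{\optx-x}\le K(\optx,y).
\]
Since $K$ is concave in its second argument, the gradient inequality for the concave map $y'\mapsto K(\optx,y')$, taken at the base point $\opty$ and evaluated at $y$, gives $K(\optx,y)-K(\optx,\opty)\le\dualprod{D_y K(\optx,\opty)}{y-\opty}$.

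Substituting the first estimate for the $x$-terms of \eqref{eq:bregpdps:dk-def} and then the second yields
\begin{equation*}
\begin{split}
    a_K(u,\optu)
    &\le K(\optx,y)-K(\optx,\opty)+\dualprod{D_y K(\optx,\opty)}{\opty-y}\\
    &\le \dualprod{D_y K(\optx,\opty)}{y-\opty}+\dualprod{D_y K(\optx,\opty)}{\opty-y}=0,
\end{split}
\end{equation*}
which is the claim.

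The individual steps are routine once the construction is fixed; the step I expect to be the crux is the bookkeeping that pairs the two first-order terms of $a_K$ with the \emph{same} intermediate point $(\optx,y)$, so that convexity is spent entirely in the $x$-slot and concavity entirely in the $y$-slot, with no cross term surviving. A mirror-image derivation through $(x,\opty)$ gives the same conclusion and serves as a consistency check. Finally I would note that when $K$ is affine in each argument separately (in particular bilinear, as in \cref{ex:bregpdps:second-order-cond:bilinear}) both gradient inequalities hold with equality, so $a_K\equiv 0$ there.
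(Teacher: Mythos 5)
Your derivation of $a_K(u,\optu)\le 0$ is precisely the paper's proof: the paper records the same two gradient inequalities (convexity of $K(\freevar,y)$ at the base point $x$ evaluated at $\optx$, concavity of $K(\optx,\freevar)$ at the base point $\opty$ evaluated at $y$) and sums them, which is the same computation as your chaining through the intermediate point $(\optx,y)$. The paper also handles the second claim exactly as you do, by asserting $B_K^S(u,\optu)=a_K(u,\optu)+a_K(\optu,u)$ and concluding from $a_K\le0$ applied to both orderings. So, relative to the paper, your proposal reproduces the intended argument faithfully.

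You should be aware, however, that the identity you (and the paper) invoke for the second claim does not actually hold, so that step would fail under scrutiny. Expanding the definitions, $B_K^S(u,\optu)=\dualprod{D_xK(x,y)-D_xK(\optx,\opty)}{x-\optx}+\dualprod{D_yK(x,y)-D_yK(\optx,\opty)}{y-\opty}$, whereas $a_K(u,\optu)+a_K(\optu,u)$ carries the \emph{opposite} sign on the $x$-part. Concretely, for bilinear $K(x,y)=\dualprod{Ax}{y}$ one computes $a_K\equiv0$ but $B_K(u,\optu)=\dualprod{A(x-\optx)}{y-\opty}$, so $B_K^S(u,\optu)=2\dualprod{A(x-\optx)}{y-\opty}$ takes both signs; hence $B_K^S\le0$ cannot be deduced from $a_K\le0$, and is in fact false for convex-concave $K$ in general. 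The quantity that the surrounding analysis actually needs (see the displayed inequality in the proof of \cref{lemma:bregpdps:second-order-cond}) is $a_K(\realoptu,u)+a_K(u,\realoptu)$, that is, the form \eqref{eq:bregpdps:second-order-cond-dk}, so only the first assertion of the lemma is used downstream and your proof of it is complete and correct. This defect is inherited from the paper rather than introduced by you, but since your framing (``the whole lemma thus reduces to the first inequality'') rests on the claimed equivalence of \eqref{eq:bregpdps:second-order-cond-symbreg} and \eqref{eq:bregpdps:second-order-cond-dk}, it is worth flagging rather than passing over.
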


\begin{proof}
    The convexity of $K(\freevar, y)$ and the concavity of $K(\optx, \freevar)$ show
    \begin{align*}
        K(x, y)-K(\optx, y) + \dualprod{D_xK(x,y)}{\optx-x}
        &\le 0 \quad\text{and} \\
        K(\optx, y)-K(\optx, \opt y) + \dualprod{D_yK(\optx,\opty)}{\opty-y} & \le 0.
    \end{align*}
    Summing these two estimates proves $a_K(u, \optu) \le 0$, consequently $B_K^S(u, \optu)=a_K(u, \optu)+a_K(\optu, u) \le 0$.
\end{proof}

\begin{example}
    \label{ex:bregpdps:second-order-cond:general-lipschitz}
    Suppose $K$ has $L_{DK}$-Lipschitz derivative within $\Omega \subset X \times Y$. If $\realoptu \in \Omega$, then by  \cref{lemma:bregpdps:distkplus1-for-k-lowerbound}, $B_K(u,\realoptu),B_K(\realoptu, u) \le \tfrac{L_{DK}}{2}\norm{u-\realoptu}_{X \times Y}^2$ for $u \in \Omega$. Thus \eqref{eq:bregpdps:condition2} holds by \cref{lemma:bregpdps:second-order-cond} with $\Omega_\realoptu=\Omega$ and
    \[
        \GenGap(u, \realoptu)=(\gamma_F-L_{DK})\norm{x-\realoptx}^2+(\gamma_{G_*}-L_{DK})\norm{y-\realopty}^2.
    \]
    This is non-negative if $\gamma_F,\gamma_{G_*} \ge L_{DK}$.
\end{example}

\begin{example}
    \label{ex:bregpdps:second-order-cond:liny}
    Let $K(x,y)=\dualprod{A(x)}{y}$ for some $A \in \linear(X; Y^*)$ such that $DA$ is Lipschitz with the factor $L_{DA} \ge 0$.
    For some $\tilde\gamma_F,\tilde\gamma_{G_*} \ge 0$ and $\rho_y, \realopt\rho_x, \alpha>0$, let either
    \begin{enumerate}[label=(\alph*)]
        \item\label{item:bregpdps:second-order-cond:primalonly} $\tilde\gamma_F \ge \frac{L_{DA}}{2}(\rho_y+\norm{\realopty}_Y)$, $\tilde\gamma_{G_*} \ge 0$, and $\Omega_\realoptu=X \times \B(0, \rho_y)$; or
        \item\label{item:bregpdps:second-order-cond:primaldual} $\tilde\gamma_F > L_{DA}\left(\norm{\realopty}_Y + \frac{\alpha}{2}\right)$, $\tilde\gamma_{G_*} \ge \frac{L_{DA}}{2\alpha}\realopt \rho_x^2$, and $\Omega_\realoptu= \B(\realoptx, \realopt\rho_x) \times Y$.
    \end{enumerate}
    Then  \cref{lemma:bregpdps:second-order-cond} proves \eqref{eq:bregpdps:condition2} with
    \[
        \GenGap(u, \realoptu)=(\gamma_F-\tilde\gamma_F)\norm{x-\realoptx}^2+(\gamma_{G_*}-\tilde\gamma_{G_*})\norm{y-\realopty}^2.
    \]

    To see this, we need to prove \eqref{eq:bregpdps:second-order-cond-dk}. Now
    \begin{equation}
        \label{eq:ex:bregpdps:second-order-cond:liny:dk}
        a_K(u, \realoptu) \defeq \dualprod{A(x)-A(\realoptx)+D A(x)(\realoptx-x)}{y}
        \quad (u, \realoptu \in X \times Y).
    \end{equation}
    Arguing with the mean value equality and the Lipschitz assumption as in  \cref{lemma:bregpdps:distkplus1-for-k-lowerbound}, we get $a_K(\realoptu,u)+a_K(u, \realoptu) \le \frac{L_{DA}}{2}(\norm{y}_Y+\norm{\realopty}_Y)\norm{x-\realoptx}^2$. Thus \ref{item:bregpdps:second-order-cond:primalonly} implies \eqref{eq:bregpdps:second-order-cond-dk}.
    By \eqref{eq:ex:bregpdps:second-order-cond:liny:dk}, the mean-value equality, and the Lipschitz assumption, also
    \[
        \begin{aligned}[t]
        a_K(u,\realoptu)+a_K(\realoptu,u)
        &
        =\dualprod{[D A(x)-D A(\realoptx)](\realoptx-x)}{\realopty}
        \\
        \MoveEqLeft[-1]
        +\dualprod{A(x)-A(\realoptx)+D A(x)(\realoptx-x)}{y-\realopty}
        \\
        &
        \le
        L_{DA}\norm{x-\realoptx}_X^2\bigl(
            \norm{\realopty}_Y + \tfrac{1}{2}\norm{y-\realopty}_Y
        \bigr).
        \end{aligned}
    \]
    Using Cauchy's inequality and \ref{item:bregpdps:second-order-cond:primaldual} we deduce \eqref{eq:bregpdps:second-order-cond-dk}.
\end{example}

\begin{remark}
    \label{rem:convergence:second:bounds}
    In the last two examples, we need to bound some of the iterates, and to initialise close enough to a solution. Showing that the iterates stay in a local neighbourhood is a large part of the work in \cite{tuomov-nlpdhgm-redo,tuomov-nlpdhgm-general}, as discussed in \cref{rem:convergence:ellipticity:bounds}.
\end{remark}

%%%%%%%%%%%%%%%%%%%%%%%%%%%%%%%%%%%%%%%%%%%%%%%
\subsection{Second-order growth conditions for block-adapted methods}
\label{sec:convergence:second:block}
%%%%%%%%%%%%%%%%%%%%%%%%%%%%%%%%%%%%%%%%%%%%%%%

\index{primal-dual!splitting!block-adapted|(}%
\index{PDPS!block-adapted|(}%

We now study second-order growth for problems with a block structure as in \cref{sec:bregpdps:block}:

\begin{fixedimportant}{Standing assumption}%
    In this subsection, $F$ and $G_*$ are as in \cref{sec:bregpdps:block}, each component subdifferential $\subdiff F_j$ now ($\gamma_{F_j}$-strongly) monotone at $\realoptx_j$ for $\realoptx_j^*$ and each $\subdiff G_{\ell*}$ ($\gamma_{G_{\ell*}}$-strongly) monotone at $\realopty_\ell$ for $\realopty_\ell^*$.
    Here $\realoptx_j$, $\realoptx_j^*$, $\realopty_\ell$ and $\realopty_\ell^*$ are the components of $\realoptx$, $\realoptx^*$, $\realopty$, and $\realopty^*$ in the corresponding subspace, assumed to satisfy the critical point condition \eqref{eq:bregpdps:ocstar}.
\end{fixedimportant}

As only some of the component functions may have $\gamma_{F_j}, \gamma_{G_{\ell*}} > 0$, through detailed analysis of the block structure, we hope to obtain (strong) convergence on some subspaces even if the entire primal or dual variables might not converge.

Similarly to \cref{lemma:bregpdps:second-order-cond} we prove:

\begin{lemma}
    \label{lemma:bregpdps:second-order-cond:block}
    Suppose for some neighbourhood $\Omega_\realoptu \subset X \times Y$ that
    \[
        \Delta_{k+1} \defeq
        \sum_{j=1}^m \tilde\gamma_{F_j} \norm{x_j-\realoptx_j}_{X_j}^2 +  \sum_{\ell=1}^n \tilde\gamma_{G_{\ell*}} \norm{y_\ell-\realopty_\ell}_{Y_\ell}^2
        \ge
        a_K(\realoptu,u)+a_K(u, \realoptu)
    \]
    for some $\tilde\gamma_{F_j}, \gamma_{G_{\ell*}} \ge 0$ for all $u \in \Omega_\realoptu$.
    Then \eqref{eq:bregpdps:condition2} holds with
    \begin{equation}
        \label{eq:convergence:second:block:gap}
        \GenGap(u, \realoptu) = \sum_{j=1}^m (\gamma_{F_j}-\tilde\gamma_{F_j}) \norm{x_j-\realoptx_j}_{X_j}^2 +  \sum_{\ell=1}^n (\gamma_{G_{\ell*}}-\tilde\gamma_{G_{\ell*}}) \norm{y_\ell-\realopty_\ell}_{Y_\ell}^2.
    \end{equation}
\end{lemma}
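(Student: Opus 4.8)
The plan is to follow the proof of \cref{lemma:bregpdps:second-order-cond} almost verbatim, the only genuinely new ingredient being the exploitation of the product structure of $F$, $G_*$, and the underlying spaces. First I would insert the critical-point condition \eqref{eq:bregpdps:ocstar}, i.e.\ $D_x K(\realoptx,\realopty)=-\realoptx^*$ and $D_y K(\realoptx,\realopty)=\realopty^*$, into the left-hand side of \eqref{eq:bregpdps:condition2} with $\optu=\realoptu$. Writing $x^*+D_x K(x,y)=(x^*-\realoptx^*)+(D_x K(x,y)-D_x K(\realoptx,\realopty))$ and the analogous splitting of the dual term, this rearranges \eqref{eq:bregpdps:condition2} into the equivalent
\[
    \dualprod{x^*-\realoptx^*}{x-\realoptx}+\dualprod{y^*-\realopty^*}{y-\realopty}
    \ge a_K(\realoptu,u)+a_K(u,\realoptu)+\GenGap(u,\realoptu),
\]
exactly as in \cref{lemma:bregpdps:second-order-cond}; the identification of the $K$-derivative pairings with $a_K(\realoptu,u)+a_K(u,\realoptu)$ is the routine expansion already performed there, where the $K(\cdot)$ values cancel and the derivative pairings recombine.

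The one new step is to treat the subdifferential terms blockwise. Since $F(x)=\sum_{j=1}^m F_j(x_j)$ and $G_*(y)=\sum_{\ell=1}^n G_{\ell*}(y_\ell)$ are separable, any $x^*\in\subdiff F(x)$ splits as $x^*=(x_1^*,\dots,x_m^*)$ with $x_j^*\in\subdiff F_j(x_j)$, and likewise for $y^*$, so the pairings decompose as $\dualprod{x^*-\realoptx^*}{x-\realoptx}=\sum_{j=1}^m\dualprod{x_j^*-\realoptx_j^*}{x_j-\realoptx_j}$ and analogously on the dual side. I would then apply the standing assumption via \eqref{eq:bregpdps:strongly-monotone}, namely that each $\subdiff F_j$ is $\gamma_{F_j}$-strongly monotone at $\realoptx_j$ for $\realoptx_j^*$ and each $\subdiff G_{\ell*}$ is $\gamma_{G_{\ell*}}$-strongly monotone at $\realopty_\ell$ for $\realopty_\ell^*$, to each summand, bounding the left-hand side below by $\sum_{j}\gamma_{F_j}\norm{x_j-\realoptx_j}_{X_j}^2+\sum_{\ell}\gamma_{G_{\ell*}}\norm{y_\ell-\realopty_\ell}_{Y_\ell}^2$.

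It then suffices to check that this lower bound dominates $a_K(\realoptu,u)+a_K(u,\realoptu)+\GenGap(u,\realoptu)$. Substituting the prescribed $\GenGap$ from \eqref{eq:convergence:second:block:gap}, the full $\gamma_{F_j}$- and $\gamma_{G_{\ell*}}$-weighted terms cancel, leaving precisely
\[
    \sum_{j=1}^m\tilde\gamma_{F_j}\norm{x_j-\realoptx_j}_{X_j}^2+\sum_{\ell=1}^n\tilde\gamma_{G_{\ell*}}\norm{y_\ell-\realopty_\ell}_{Y_\ell}^2 \ge a_K(\realoptu,u)+a_K(u,\realoptu),
\]
which is exactly the hypothesis $\Delta_{k+1}\ge a_K(\realoptu,u)+a_K(u,\realoptu)$ assumed for $u\in\Omega_\realoptu$. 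As the argument is a direct block-decomposition of a proof already given, no serious difficulty is expected; the only point demanding care is the separability claim $\subdiff F=\prod_j\subdiff F_j$ (and its dual analogue), which justifies splitting the pairings into blocks and underlies the blockwise monotonicity in the standing assumption.
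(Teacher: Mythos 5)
Your proposal is correct and is exactly the argument the paper intends: the paper gives no written proof, only the remark ``Similarly to \cref{lemma:bregpdps:second-order-cond} we prove,'' and your blockwise repetition of that proof---inserting \eqref{eq:bregpdps:ocstar}, identifying the $K$-derivative pairings with $a_K(\realoptu,u)+a_K(u,\realoptu)$, and applying the componentwise strong monotonicity from the standing assumption---is precisely the intended filling-in of the details. The separability $\subdiff F(x)=\prod_j \subdiff F_j(x_j)$ that you flag is the standard exact subdifferential decomposition for separable sums of convex functions and is implicitly built into the standing assumption, so there is no gap.
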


In the convex--concave case, we can transfer all strong monotonicity into $\GenGap$:

\begin{example}
    \label{ex:convergence:second:block:convex-concave}
    If $K$ is convex-concave, then by \cref{lemma:bregpdps:dk,lemma:bregpdps:second-order-cond:block}, \eqref{eq:bregpdps:condition2} holds with $\Omega_\realoptu=X \times Y$  and $\GenGap$ as in \eqref{eq:convergence:second:block:gap} for $\tilde\gamma_{F_j}=0$ and $\tilde\gamma_{G_{\ell*}}=0$.
    We have $\GenGap(\freevar, \realoptu) \ge 0$ always.
\end{example}

\begin{example}
    As in \cref{ex:bregpdps:block0}, suppose we can write $K(x,y)=\sum_{j=1}^m \sum_{\ell=1}^n K_{j\ell}(x_j, y_\ell)$ with each $K_{j\ell}$ Lipschitz-continuously differentiable with the factor $L_{j\ell}$ in $\Omega$.
    Then using \eqref{eq:bregpdps:block0-k} and \cref{lemma:bregpdps:second-order-cond:block}, we see \eqref{eq:bregpdps:condition2} to hold with $\Omega_\realoptu=\Omega$ and $\GenGap$ as in \eqref{eq:convergence:second:block:gap} with
    \[
        \tilde\gamma_{F_j} = \sum_{\ell=1}^n L_{j\ell}
        \quad (j=1,\ldots,m)
        \quad\text{and}\quad
        \tilde\gamma_{G_{\ell*}} = \sum_{j=1}^m L_{j\ell}
        \quad (\ell=1,\ldots,n).
    \]
    Thus $\GenGap(\freevar, \realoptu) \ge 0$ if $\gamma_{F_j} \ge  \sum_{\ell=1}^n L_{j\ell}$ and $\gamma_{G_{\ell*}} \ge \sum_{j=1}^m L_{j\ell}$ for all $\ell$ and $j$.
\end{example}

The special case of \cref{ex:bregpdps:block0} with each $K_{j\ell}$ bilinear, corresponding to \cref{ex:bregpdps:block0linear} for ellipticity, is covered by \cref{ex:convergence:second:block:convex-concave}.

We consider in detail the two dual block setup of \cref{ex:bregpdps:twoblock,ex:bregpdps:twoblock:steps}:

\begin{example}
    \label{ex:bregpdps:twoblock:nbd}
    As in \cref{ex:bregpdps:twoblock}, let $K(x,y)=\dualprod{A_1(x)}{y_1}+\dualprod{A_2x}{y_2}$ for $A_1 \in C^1(X; Y_1^*)$ and $A_2 \in \linear(X; Y_2^*)$.
    Then, as in \eqref{eq:ex:bregpdps:second-order-cond:liny:dk},
    \[
        a_K(u, \bar u)=\dualprod{A_1(x)-A_1(\bar x)+DA_1(x)(\bar x-x)}{y_1},
    \]
    which does not depend on $A_2$. For any $\alpha, \rho_y,\realopt\rho_x>0$ let either
    \begin{enumerate}[label=(\alph*)]
        \item $\tilde\gamma_F \ge \frac{L_{DA_1}}{2}(\rho_{y_1}+\norm{\realopty_1}_{Y_1})$, $\tilde\gamma_{G_{1*}} \ge 0$, and $\Omega_\realoptu=X \times \B(0, \rho_{y_1})$; or
        \item $\tilde\gamma_F > L_{DA_1}\left(\norm{\realopty_1}_{Y_1} + \frac{\alpha}{2}\right)$, $\tilde\gamma_{G_{1*}} \ge \frac{L_{DA_1}}{2\alpha}\realopt\rho_x^2$, and $\Omega_\realoptu=\B(\realoptx, \realopt\rho_x) \times Y$.
    \end{enumerate}
    Arguing as in \cref{ex:bregpdps:second-order-cond:liny} and using \cref{lemma:bregpdps:second-order-cond:block}, we then see \eqref{eq:bregpdps:condition2} to hold with  $\GenGap$ as in \eqref{eq:convergence:second:block:gap} and $\tilde\gamma_{G_{2*}}=0$.
    In this case $\GenGap(\freevar, \realoptu)$ is non-negative if $\gamma_F \ge \tilde\gamma_F$ and $\gamma_{G_{1*}} \ge \tilde\gamma_{G_{1*}}$.
\end{example}

\index{primal-dual!splitting!block-adapted|)}%
\index{PDPS!block-adapted|)}%

%%%%%%%%%%%%%%%%%%%%%%%%%%%%%%%%%%%%%%%%%%%%%%%
\subsection{Convergence of iterates}
\label{sec:convergence:iterate}
%%%%%%%%%%%%%%%%%%%%%%%%%%%%%%%%%%%%%%%%%%%%%%%

We are now ready to prove the convergence of the iterates. We start with weak convergence and proceed to strong and linear convergence. For weak convergence in infinite dimensions, we need some further technical assumptions.
We recall that a set-valued map $T: X \setto X^*$ is weak-to-strong (weak-$*$-to-strong) outer semicontinuous if $x_k^* \in T(x^k)$ and $x^k \weakto x$ ($x^k \weaktostar x$) and $x_k^* \to x^*$ imply $x^* \in T(x)$. The non-reflexive case of the next assumption covers spaces of functions of bounded variation \cite[Remark 3.12]{ambrosio2000fbv}, important for total variation based imaging.

\begin{assumption}
    \label{ass:convergence:weak}
    Each of the spaces $X$ and $Y$ is, individually, either a reflexive Banach space or the dual of separable space.
    The operator $H: X \times Y \setto X^* \times Y^*$ is \mbox{weak(-$*$)}-to-strong \indexalso{outer semicontinuous}, where we mean by ``weak(-$*$)'' that we take the weak topology if the space is reflexive and weak-$*$ otherwise, individually on $X$ and $Y$.
\end{assumption}

Subdifferentials of lower semicontinuous convex functions are weak(-$*$)-to-strong outer semicontinuous\footnote{This result seems difficult to find in the literature for Banach spaces, but follows easily from the definition of the subdifferential: If $F(x) \ge F(\thisx) + \dualprod{x_k^*}{x-\thisx}$ and $x_k^* \to \realoptx^*$ as well as $\thisx \weakto $ (or $\weaktostar$) $\realoptx$, then, using the fact that $\{\norm{\thisx-\realoptx}\}_{k \in \N}$ is bounded, in the limit $F(x) \ge F(\realoptx) + \dualprod{\realoptx^*}{x-\realoptx}$.}, so the outer semicontinuity of $H$ depends mainly on $K$.

\begin{example}
    If $X$ and $Y$ are finite-dimensional, \cref{ass:convergence:weak} holds if $K \in C^1(X; Y)$.
\end{example}

\begin{example}
    More generally, \cref{ass:convergence:weak} holds if $K \in C^1(X \times Y)$ and $DK$ is continuous from the weak(-$*$) topology to the strong topology.
\end{example}

\begin{example}
    If $K=\dualprod{Ax}{y} + E(x)$ for $A \in \linear(X; Y^*)$ and $E \in C^1(X)$ convex, then  $H$ satisfies \cref{ass:convergence:weak}. Indeed, it can be shown that $H$ is maximal monotone, hence weak(-$*$) outer semicontinuous similarly to convex subdifferentials.
\end{example}

\begin{fixedimportant}{Verification of the conditions}%
    To verify the nonsmooth second-order growth condition \eqref{eq:bregpdps:condition2} for each of the following \cref{thm:convergence:weak,thm:convergence:strong,thm:convergence:linear}, we point to \cref{sec:convergence:second,sec:convergence:second:block}.
    For the verification of the \mbox{(semi-)}ellipticity of $B^0$, we point to \cref{sec:convergence:ellipticity,sec:convergence:ellipticity:block}.
    As special cases of the PDBS \eqref{eq:bregpdps:alg:expanded}, the theorems apply to the Hilbert-space PDPS \eqref{eq:bregpdps:alg:hilbert} and its block-adaptation \eqref{eq:bregpdps:alg:block}. Then $J_X$ and $J_Y$ are continuously differentiable and convex.
\end{fixedimportant}

\begin{theorem}[Weak convergence]\index{convergence!weak}
    \label{thm:convergence:weak}
    Let $F$ and $G_*$ be convex, proper, and lower semicontinuous; $K \in C^1(X \times Y)$; and both $J_X \in C^1(X)$ and $J_Y \in C^1(Y)$ convex. % and Gâteaux-differentiable.
    Suppose \cref{ass:convergence:weak} holds and for some $\realoptu \in \inv H(0)$ that
    \begin{enumerate}[label=(\roman*)]
        \item \eqref{eq:bregpdps:condition2} holds with $\GenGap(\freevar,\realoptu) \ge 0$ within $\Omega_\realoptu \subset X \times Y$; and
        \item $B^0$ is elliptic within $\Omega \ni \realoptu$.
    \end{enumerate}
    Let $\{\nextu\}_{k \in \N}$ be generated by the PDBS \eqref{eq:bregpdps:alg:expanded} for any initial $u^0$, and suppose $\{\thisu\}_{k \in \N} \subset \Omega \isect \Omega_\realoptu$.
    Then there exists at least one cluster point of $\{\thisu\}_{k \in \N}$, and all weak(-$*$) cluster points belong to $\inv H(0)$.
\end{theorem}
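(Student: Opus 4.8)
The plan is to route the iterates through \cref{thm:abstractbreg} via \cref{lemma:bregpdps:convergence}, extract boundedness together with a step-to-step decay estimate from the resulting Féjer monotonicity and descent inequalities, and then identify weak(-$*$) cluster points using the outer semicontinuity of $H$.

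First I would apply \cref{lemma:bregpdps:convergence} with $\optu=\realoptu$: its hypotheses are exactly (i) together with the inclusion $\{\thisu\}_{k\in\N}\subset\Omega_\realoptu$, so it yields the quantitative $\Delta$-Féjer monotonicity \eqref{eq:abstractbreg:fejer} and the descent inequality \eqref{eq:abstractbreg:result} with $B=B^0$. Since $\GenGap(\freevar,\realoptu)\ge 0$ by (i) and $B^0(\nextu,\thisu)\ge 0$ by the ellipticity in (ii), \eqref{eq:abstractbreg:fejer} shows that $k\mapsto B^0(\realoptu,\thisu)$ is non-increasing, hence bounded by $B^0(\realoptu,u^0)$; the $\gamma$-ellipticity of $B^0$ within $\Omega$ (using $\realoptu,\thisu\in\Omega$) then gives a uniform bound on $\norm{\thisu-\realoptu}_{X\times Y}$, so $\{\thisu\}_{k\in\N}$ is bounded. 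Dropping the non-negative first and third terms in \eqref{eq:abstractbreg:result} yields $\sum_{k}B^0(\nextu,\thisu)\le B^0(\realoptu,u^0)<\infty$, whence $B^0(\nextu,\thisu)\to 0$, and ellipticity again converts this into $\norm{\nextu-\thisu}_{X\times Y}\to 0$.

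Next I would produce a cluster point. Under \cref{ass:convergence:weak} each of $X$ and $Y$ is reflexive or the dual of a separable space, so bounded sequences admit weak (respectively weak-$*$) convergent subsequences---by the Eberlein--\v{S}mulian theorem in the reflexive case and by sequential Banach--Alaoglu in the dual-of-separable case---and a diagonal extraction over the two factors gives a subsequence $u^{k_i}\weakto u^*$ in the product weak(-$*$) topology; this settles existence. Because $\norm{\nextu-\thisu}_{X\times Y}\to 0$ strongly, the shifted subsequence also satisfies $u^{k_i+1}\weakto u^*$.

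Finally I would identify the limit. Rearranging the defining inclusion \eqref{eq:bregpdps:alg} exactly as in the proof of \cref{lemma:bregpdps:convergence}, the element $\nexxt h\defeq DJ^0(\thisu)-DJ^0(\nextu)$ belongs to $H(\nextu)$. The crucial, and I expect the hardest, step is to show $h^{k_i+1}\to 0$ \emph{strongly} in $X^*\times Y^*$. Writing $DJ^0=(DJ_X-D_x K,\,DJ_Y-D_y K)$, I would split $\nexxt h$ into a ``$J$-part'' and a ``$K$-part''. The $J$-part $DJ_X(\thisx)-DJ_X(\nextx)$ and its $Y$-analogue vanish because $\norm{\thisu-\nextu}_{X\times Y}\to 0$ and $DJ_X,DJ_Y$ are continuous on the bounded set containing the iterates (for the Hilbert-space PDPS they are linear, hence Lipschitz, so this is immediate). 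The $K$-part $D_x K(\thisu)-D_x K(\nextu)$ I would treat along the subsequence by inserting $D_x K(u^*)$, namely $[D_x K(u^{k_i})-D_x K(u^*)]-[D_x K(u^{k_i+1})-D_x K(u^*)]$, and invoking the weak(-$*$)-to-strong continuity of $DK$ that underlies \cref{ass:convergence:weak}, since both $u^{k_i},u^{k_i+1}\weakto u^*$. With $h^{k_i+1}\in H(u^{k_i+1})$, $u^{k_i+1}\weakto u^*$, and $h^{k_i+1}\to 0$, the weak(-$*$)-to-strong outer semicontinuity of $H$ from \cref{ass:convergence:weak} delivers $0\in H(u^*)$, i.e.\ $u^*\in\inv H(0)$, as claimed. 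The main obstacle is precisely this strong convergence $\nexxt h\to 0$: transferring the step decay through the nonlinear $DK$ is what forces the weak-to-strong continuity hypothesis packaged in \cref{ass:convergence:weak}, while the smooth $J$-part is controlled purely by the norm decay $\norm{\nextu-\thisu}_{X\times Y}\to 0$.
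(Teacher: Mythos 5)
Your proposal is correct and follows essentially the same route as the paper: \cref{lemma:bregpdps:convergence} gives \eqref{eq:abstractbreg:result}, ellipticity of $B^0$ yields boundedness and $\norm{\nextu-\thisu}\to 0$, Eberlein--\v{S}mulian/Banach--Alaoglu produce a weak(-$*$) cluster point, and the weak(-$*$)-to-strong outer semicontinuity of $H$ identifies it as a root. Your treatment of the key step $\nexxt{h}\to 0$ is in fact more explicit than the paper's one-line appeal to $J^0\in C^1$ (which tacitly needs uniform continuity of $DJ^0$ on the bounded set containing the iterates); just note that the weak-to-strong continuity of $DK$ you invoke for the $K$-part is a sufficient condition for \cref{ass:convergence:weak} listed in the paper's examples rather than part of the assumption itself.
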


\begin{proof}
    \Cref{lemma:bregpdps:convergence} establishes \eqref{eq:abstractbreg:result} for $B=B^0$ and all $N \ge 1$. With
    $\epsilon>0$ the factor of ellipticity of $B^0$, it follows
    \[
        \frac{\epsilon}{2}\norm{u^N-\realoptu}_{X \times Y}^2
        +
        \frac{\epsilon}{2}\sum_{k=0}^{N-1} \norm{\nextu-\thisu}_{X \times Y}^2
        \le B^0(\realoptu, u^0)
        \quad (N \ge 1).
    \]
    Clearly $\norm{\nextu-\thisu} \to 0$ while $\{\norm{u^{k}-\realoptu}\}_{k \in \N}$ is bounded. Using the Eberlein--{\u S}mulyan theorem in a reflexive $X$ or $Y$, and the Banach--Alaoglu theorem otherwise ($X$ or $Y$ the dual of a separable space), we may therefore find a subsequence of $\{u^{k}\}_{k \in \N}$ converging weakly(-$*$) to some $\optx$.
    Since $J^0 \in C^1(X \times Y)$, we deduce $D_1 B^0(\nextu, \thisu) \to 0$.
    Consequently \eqref{eq:bregpdps:alg} implies that $0 \in \limsup_{k \to \infty} H(\nextu)$, where we write ``$\limsup$'' for the Painlevé--Kuratowski outer limit of a sequence of sets in the strong topology.
    Since $H$ is weak(-$*$)-to-strong  outer semicontinuous by \cref{ass:convergence:weak}, it follows that $0 \in H(\realoptu)$.
    Therefore, there exists at least one cluster point of $\{\thisu\}_{k \in \N}$ belonging to $\inv H(0)$.
    Repeating the argument on any  weak(-$*$) convergent subsequence, we deduce that all cluster points belong to $\inv H(0)$.
\end{proof}

\begin{remark}
    For a unique weak limit we may in Hilbert spaces use the quantitative Féjer monotonicity \eqref{eq:abstractbreg:fejer} with \index{lemma!Opial}Opial's lemma \cite{opial1967weak,browder1967convergence}. For bilinear $K$ the result is relatively immediate, as $B^0$ is a squared matrix-weighted norm; see \cite{tuomov-proxtest}. Otherwise a variable-metric Opial's lemma  \cite{tuomov-nlpdhgm-redo} and additional work based on the \index{lemma!Brezis--Crandall--Pazy}Brezis--Crandall--Pazy lemma \cite[Corollary 20.59 (iii)]{bresiz1970perturbations} is required; see \cite{tuomov-nlpdhgm-redo} for $K(x,y)=\dualprod{A(x)}{y}$, and \cite{tuomov-nlpdhgm-general} for general $K$.
\end{remark}

\begin{theorem}[Strong convergence]\index{convergence!strong}
    \label{thm:convergence:strong}
    Let $F$ and $G_*$ be convex, proper, and lower semicontinuous; $K \in C^1(X \times Y)$; and both $J_X \in C(X)$ and $J_Y \in C(Y)$ convex and Gâteaux-differentiable. Suppose for some $\realoptu \in \inv H(0)$ that
    \begin{enumerate}[label=(\roman*)]
        \item \eqref{eq:bregpdps:condition2} holds with $\GenGap(\freevar,\realoptu) \ge 0$ within $\Omega_\realoptu \subset X \times Y$; and
        \item $B^0$ is semi-elliptic within $\Omega \ni \realoptu$.
    \end{enumerate}
    Let $\{\nextu\}_{k \in \N}$ be generated by the PDBS \eqref{eq:bregpdps:alg:expanded} for any initial $u^0$, and suppose $\{\thisu\}_{k \in \N} \subset \Omega \isect \Omega_\realoptu$.
    Then $\GenGap(\nextu, \realoptu) \to 0$ as $N \to \infty$.
    
    In particular, if $\GenGap(u, \realoptu) \ge \norm{P(u-\realoptu)}_Z^2$ for some $P \in \linear(X; Z)$, then $P x^N \to P \realoptx$ strongly in $Z$ and the \term{ergodic sequence} $\tilde x^N_P \defeq \frac{1}{N}\sum_{k=0}^{N-1} P \nextx \to P \realoptx$ at rate $O(1/N)$.
\end{theorem}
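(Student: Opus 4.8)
The plan is to extract everything from the descent inequality \eqref{eq:abstractbreg:result} furnished by \cref{lemma:bregpdps:convergence}. First I would apply that lemma with $\optu=\realoptu$ and $B=B^0$: since $\{\thisu\}_{k\in\N}\subset\Omega\isect\Omega_\realoptu$, hypothesis~(i) makes the growth condition \eqref{eq:bregpdps:condition2} available at every iterate, so \eqref{eq:abstractbreg:result} holds for all $N\ge 1$, namely
\[
    B^0(\realoptu, u^N)
    +\sum_{k=0}^{N-1} B^0(\nextu,\thisu)
    +\sum_{k=0}^{N-1}\GenGap(\nextu, \realoptu)
    \le B^0(\realoptu, u^0).
\]
Next I would discard the non-negative terms on the left. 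Because $\realoptu\in\Omega$ and every iterate $u^k$ lies in $\Omega$, the semi-ellipticity~(ii) gives $B^0(\realoptu, u^N)\ge 0$ and $B^0(\nextu,\thisu)\ge 0$, while $\GenGap(\freevar,\realoptu)\ge 0$ by~(i). Hence for every $N$,
\[
    \sum_{k=0}^{N-1}\GenGap(\nextu, \realoptu) \le B^0(\realoptu, u^0) < \infty.
\]

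Thus the partial sums of the non-negative series $\sum_k \GenGap(\nextu, \realoptu)$ are uniformly bounded, so the series converges and its general term must vanish, which is exactly $\GenGap(\nextu, \realoptu)\to 0$. Here it is worth stressing the contrast with \cref{thm:convergence:weak}: semi-ellipticity yields only $B^0\ge 0$ and no quadratic control of $\norm{u^N-\realoptu}$, so nothing can be said about the full iterate, only about the quantity the gap dominates. For the ``in particular'' claim I would invoke the lower bound $\GenGap(u,\realoptu)\ge\norm{P(x-\realoptx)}_Z^2$. Termwise this gives $\norm{P\nextx-P\realoptx}_Z^2\le\GenGap(\nextu,\realoptu)\to 0$, i.e.\ $Px^N\to P\realoptx$ strongly in $Z$. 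For the ergodic rate I would combine the same bound with the summed estimate above and the convexity of $\norm{\freevar}_Z^2$: writing $\tilde x^N_P-P\realoptx=\frac1N\sum_{k=0}^{N-1}(P\nextx-P\realoptx)$, Jensen's inequality yields
\[
    \norm{\tilde x^N_P - P\realoptx}_Z^2
    \le \frac1N\sum_{k=0}^{N-1}\norm{P\nextx-P\realoptx}_Z^2
    \le \frac1N\sum_{k=0}^{N-1}\GenGap(\nextu,\realoptu)
    \le \frac{B^0(\realoptu, u^0)}{N},
\]
the asserted $O(1/N)$ decay of the squared error.

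I do not anticipate a genuine obstacle: the fundamental estimate of \cref{thm:abstractbreg}, transported to the PDBS by \cref{lemma:bregpdps:convergence}, does all the real work, and what remains is sign-checking and one application of Jensen's inequality. The only points demanding care are the domain bookkeeping---confirming that $\realoptu$ and all iterates $u^k$ lie in $\Omega$ so that each discarded term is legitimately non-negative---and the conscious recognition that semi-ellipticity is deliberately too weak to control $u^N$ itself, which is precisely why the conclusion is phrased through $\GenGap$ and its dominated quantity $P(x-\realoptx)$ rather than the whole iterate.
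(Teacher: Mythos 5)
Your proposal is correct and follows the paper's own route exactly: \cref{lemma:bregpdps:convergence} gives the descent inequality \eqref{eq:abstractbreg:result}, semi-ellipticity lets you drop the Bregman terms to bound $\sum_{k=0}^{N-1}\GenGap(\nextu,\realoptu)$ by $B^0(\realoptu,u^0)$, non-negativity forces the general term to zero, and the quadratic minorisation plus Jensen's inequality yield the strong and ergodic claims. Your write-up is merely more explicit than the paper's (which compresses the last two steps into one sentence), including the correct reading of the rate as $O(1/N)$ for the squared error.
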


\begin{proof}
    \Cref{lemma:bregpdps:convergence} establishes \eqref{eq:abstractbreg:result}. By the semi-ellipticity of $B^0$ then
    $
        \sum_{k=0}^{N-1} \GenGap(\nextu, \realoptu)
        \le B^0(\realoptu, u^0),
    $
    ($N \in \N$).
    Since $\GenGap(\nextu, \realoptu) \ge 0$, this shows that $\GenGap(u^N, \realoptu) \to 0$.
    The strong convergence of the primal variable for quadratically minorised $\GenGap$ is then immediate whereas following by Jensen's inequality gives the ergodic convergence claim.
\end{proof}

\begin{example}
    In \cref{sec:convergence:second}, we can take $Pu=\sqrt{\gamma_F-\tilde\gamma_F} x$  if $\gamma_F>\tilde\gamma_F$ or $Pu=\sqrt{\gamma_{G_*}-\tilde\gamma_{G_*}} y$ if $\gamma_{G_*} > \tilde \gamma_{G_*}$. The examples of \cref{sec:convergence:second:block} for $x=(x_1,\ldots,x_m)$, $y=(y_1,\ldots,y_n)$ may allow $Pu=\sqrt{\gamma_{F_j}-\tilde\gamma_{F_j}}x_j$ or $Pu=\sqrt{\gamma_{G_{\ell*}}-\tilde\gamma_{G_{\ell*}}}y_\ell$.
\end{example}

\begin{remark}
    Under similar conditions as  \cref{thm:convergence:strong}, it is possible to obtain $O(1/N^2)$ convergence rates; see \cite{chambolle2010first,tuomov-proxtest} for the convex-concave case and \cite{tuomov-nlpdhgm-redo,tuomov-nlpdhgm-general} in general.
\end{remark}

\begin{theorem}[Linear convergence]\index{convergence!linear}
    \label{thm:convergence:linear}
    Let $F$ and $G_*$ be convex, proper, and lower semicontinuous; $K \in C^1(X \times Y)$; and both $J_X \in C(X)$ and $J_Y \in C(Y)$ convex and Gâteaux-differentiable.
    Suppose for some $\gamma>0$ and $\realoptu \in \inv H(0)$ that
    \begin{enumerate}[label=(\roman*)]
        \item\label{item:convergence:linear:gap} \eqref{eq:bregpdps:condition2} holds with $\GenGap(u,\realoptu) \ge \gamma B^0(\realoptu, u)$ within $\Omega_\realoptu \subset X \times Y$; and
        \item  $B^0$ is elliptic within $\Omega \supset \realoptu$.
    \end{enumerate}
    Let $\{\nextu\}_{k \in \N}$ be generated by the PDBS \eqref{eq:bregpdps:alg:expanded} for any initial $u^0$, and suppose $\{\thisu\}_{k \in \N} \subset \Omega \isect \Omega_\realoptu$.
    Then $B^0(\realoptu, u^N) \to 0$ and $u^N \to \realoptu$ at a linear rate.

    In particular, if  $\GenGap(u, \realoptu) \ge \gamma \norm{u-\realoptu}^2$, ($k \in \N$), for some $\gamma>0$, and $J^0$ is Lipschitz-continuously differentiable, then $u^N \to \realoptu$ at a linear rate.
\end{theorem}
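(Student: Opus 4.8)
The plan is to extract a one-step linear contraction of $B^0(\realoptu, \thisu)$ directly from the quantitative $\Delta$-Féjer monotonicity and then telescope. First I would apply \cref{lemma:bregpdps:convergence} with $\optu=\realoptu$ and $B=B^0$: because the hypotheses ensure \eqref{eq:bregpdps:condition2} throughout $\Omega_\realoptu$ and the iterates remain in $\Omega \isect \Omega_\realoptu$, the monotonicity \eqref{eq:abstractbreg:fejer} gives
\[
    B^0(\realoptu, \nextu) + B^0(\nextu, \thisu) + \GenGap(\nextu, \realoptu) \le B^0(\realoptu, \thisu).
\]
Into this I substitute the growth hypothesis \cref{item:convergence:linear:gap}, namely $\GenGap(\nextu, \realoptu) \ge \gamma B^0(\realoptu, \nextu)$, and drop the non-negative term $B^0(\nextu, \thisu)$, whose non-negativity follows from the ellipticity of $B^0$ on $\Omega$. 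This yields the contraction $(1+\gamma) B^0(\realoptu, \nextu) \le B^0(\realoptu, \thisu)$.

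Next I would iterate over $k=0,\ldots,N-1$ to obtain $B^0(\realoptu, u^N) \le (1+\gamma)^{-N} B^0(\realoptu, u^0)$, so that $B^0(\realoptu, u^N) \to 0$ at the linear rate $(1+\gamma)^{-1}$. To pass to the iterates themselves, I would use the ellipticity lower bound $\tfrac{\epsilon}{2}\norm{u^N-\realoptu}_{X \times Y}^2 \le B^0(\realoptu, u^N)$, with $\epsilon>0$ the ellipticity factor, which gives $\norm{u^N-\realoptu}_{X \times Y} \le \sqrt{2 B^0(\realoptu, u^0)/\epsilon}\,(1+\gamma)^{-N/2}$; hence $u^N \to \realoptu$ at the linear rate $(1+\gamma)^{-1/2}$.

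For the ``in particular'' claim I would reduce the quadratic growth assumption to \cref{item:convergence:linear:gap}. The estimate in \cref{lemma:bregpdps:distkplus1-for-k-lowerbound} is really a general fact about a Bregman divergence whose generating function has Lipschitz derivative; applying its proof to $J^0$ in place of $K$ gives $B^0(\realoptu, u) \le \tfrac{L}{2}\norm{u-\realoptu}_{X \times Y}^2$, where $L$ is the Lipschitz factor of $DJ^0$. Combined with the assumed $\GenGap(u, \realoptu) \ge \gamma \norm{u-\realoptu}_{X \times Y}^2$ this gives $\GenGap(u, \realoptu) \ge (2\gamma/L) B^0(\realoptu, u)$, which is exactly \cref{item:convergence:linear:gap} with $\gamma$ replaced by $2\gamma/L$; the first part of the theorem then delivers the linear rate.

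The proof is in essence the telescoping of a single inequality, so I do not expect a deep obstacle; the care needed is in the bookkeeping. The main point is to make sure the two hypotheses are available at every iterate together with $\realoptu$, which is guaranteed by $\{\thisu\}_{k \in \N} \subset \Omega \isect \Omega_\realoptu$ and $\realoptu \in \Omega$, and that the elliptic lower bound and (for the last part) the Lipschitz upper bound on $B^0$ are invoked only where they hold. A secondary subtlety is that the contraction of the Bregman divergence transfers to the norm only after taking a square root, so the iterate rate is $(1+\gamma)^{-1/2}$ rather than $(1+\gamma)^{-1}$.
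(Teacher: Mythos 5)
Your proposal is correct and follows essentially the same route as the paper's proof: \cref{lemma:bregpdps:convergence} gives the Féjer monotonicity \eqref{eq:abstractbreg:fejer}, hypothesis \cref{item:convergence:linear:gap} turns it into the contraction $(1+\gamma)B^0(\realoptu,\nextu)\le B^0(\realoptu,\thisu)$, ellipticity transfers the rate to the norm, and the ``in particular'' part is reduced to the main claim via the Lipschitz bound $B^0(\realoptu,u)\lesssim\norm{u-\realoptu}^2$ obtained as in \cref{lemma:bregpdps:distkplus1-for-k-lowerbound}. Your bookkeeping of the constants (the need to drop the non-negative $B^0(\nextu,\thisu)$ term and the square-root loss in the iterate rate) is if anything slightly more explicit than the paper's.
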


\begin{proof}
    \Cref{lemma:bregpdps:convergence} establishes the quantitative $\Delta$-Féjer monotonicity \eqref{eq:abstractbreg:fejer}. Using \ref{item:convergence:linear:gap}, this yields $(1+\gamma)B^0(\realoptu, \nextu) \le B^0(\realoptu, \thisu)$.
    By the semi-ellipticity of $B^0$, the claimed linear convergence of $B^0(\realoptu, u^N) \to 0$ follows. Since $B^0$ is assumed elliptic, also $u^N \to \realoptu$ linearly.
    If $J^0$ is Lipschitz-continuously differentiable, then, similarly to \cref{lemma:bregpdps:distkplus1-for-k-lowerbound},  $B^0(\realoptu,\nextu) \le L_{DJ}\norm{\nextu-\realoptu}^2$ for some $L_{DJ}>0$. Thus $\GenGap(\nextu, \realoptu) \ge \gamma\inv L_{DJ}B^0(\realoptu,\nextu)$, so the main claim establishes the particular claim.
\end{proof}

\begin{example}
    $J^0$ is Lipschitz-continuously differentiable if $X$ and $Y$ are Hilbert spaces with $J_X=\inv\tau N_X$ and $J_Y=\inv\sigma N_Y$, and $K$ Lipschitz-continuously differentiable.
\end{example}

%%%%%%%%%%%%%%%%%%%%%%%%%%%%%%%%%%%%%%%%%%%%%%%
\subsection{Convergence of gaps in the convex-concave setting}
\label{sec:convergence:gaps}
%%%%%%%%%%%%%%%%%%%%%%%%%%%%%%%%%%%%%%%%%%%%%%%

We finish this section by studying the convergence of gap functionals in the convex-concave setting.

\begin{lemma}\index{convergence!gap}
    \label{lemma:bregpdps:unaccelerated-gap}
    Suppose $F$ and $G_*$ are convex, proper, and lower semicontinuous, and $K \in C^1(X \times Y)$ is convex-concave on $\dom F \times \dom G_*$. Then \eqref{eq:bregpdps:condition2} holds for all $\optu \in X \times Y$ with $\Omega_\optu=X\times Y$ and $\GenGap=\GenGap^\lagrangian$ the \term[gap!Lagrangian]{Lagrangian gap}
    \[
        \begin{aligned}[t]
            \GenGap^\lagrangian(u, \optu)
            & \defeq
            \lagrangian(x, \opty) - \lagrangian(\optx, y)
            \\
            &
            =
            [F(x)+K(x,\opty)-G_*(\opty)]
            -
            [F(\optx)+K(\realoptx, y)-G_*(y)].
        \end{aligned}
    \]
    This functional is non-negative if $\optu \in \inv H(0)$.

    Moreover, if $\sum_{k=0}^{N-1} \GenGap^\lagrangian(\nextu, \optu) \le M(\optu)$ for some $M(\optu) \ge 0$, for all $\optu \in X \times Y$ and all $N \in \N$, and we define the \term{ergodic sequence} $\tilde u^N \defeq \frac{1}{N}\sum_{k=0}^{N-1} \nextu$, then
    \begin{enumerate}[label=(\roman*)]
        \item\label{item:bregpdps:unaccelerated-gap:gapsum}
            $0 \le \frac{1}{N}\sum_{k=0}^{N-1} \GenGap^\lagrangian(\nextu, \realoptu) \to 0$ at the rate $O(1/N)$ for $\realoptu \in \inv H(0)$.
        \item\label{item:bregpdps:unaccelerated-gap:erggap}
            $0 \le \GenGap^\lagrangian(\tilde u^N, \realoptu) \to 0$ at the rate $O(1/N)$ for $\realoptu \in \inv H(0)$.
        \item\label{item:bregpdps:unaccelerated-gap:partgap}
            If $M \in C(X \times Y)$ and $\Omega \subset X \times Y$ is bounded with $\Omega \isect \inv H(0) \ne \emptyset$, then $0 \le \GenGap_\Omega(\tilde u^N) \to 0$ at the rate $O(1/N)$ for the \term[gap!partial]{partial gap}
            \(
                \GenGap_\Omega(u) \defeq  \sup_{\optu \in \Omega} \GenGap^\lagrangian(u, \optu).
            \)
    \end{enumerate}
\end{lemma}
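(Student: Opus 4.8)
The plan is to prove the four assertions in order, each reducing to a short manipulation once the correct convexity is isolated.

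First I would establish that (C$^2$) holds with $\GenGap=\GenGap^\lagrangian$. For fixed $\optu=(\optx,\opty)$, $x^*\in\subdiff F(x)$ and $y^*\in\subdiff G_*(y)$, I would write down four elementary inequalities: the subgradient inequalities $\dualprod{x^*}{x-\optx}\ge F(x)-F(\optx)$ and $\dualprod{y^*}{y-\opty}\ge G_*(y)-G_*(\opty)$ from convexity of $F$ and $G_*$; the bound $\dualprod{D_xK(x,y)}{x-\optx}\ge K(x,y)-K(\optx,y)$ from convexity of $K(\freevar,y)$; and $\dualprod{-D_yK(x,y)}{y-\opty}\ge K(x,\opty)-K(x,y)$ from concavity of $K(x,\freevar)$. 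Summing the four and cancelling the $\pm K(x,y)$ terms gives the left-hand side of (C$^2$) on one side and $F(x)-F(\optx)+K(x,\opty)-K(\optx,y)+G_*(y)-G_*(\opty)=\GenGap^\lagrangian(u,\optu)$ on the other. Since nothing localises the argument, $\Omega_\optu=X\times Y$.

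Next, for non-negativity I would specialise to $\optu=\realoptu\in\inv H(0)$. Because $K$ is convex-concave, such a root is exactly a saddle point, so $\lagrangian(\realoptx,y)\le\lagrangian(\realoptx,\realopty)\le\lagrangian(x,\realopty)$ for every $(x,y)$; subtracting yields $\GenGap^\lagrangian(u,\realoptu)=\lagrangian(x,\realopty)-\lagrangian(\realoptx,y)\ge 0$. For the convergence claims I would exploit the hypothesis $\sum_{k=0}^{N-1}\GenGap^\lagrangian(\nextu,\optu)\le M(\optu)$. Claim \ref{item:bregpdps:unaccelerated-gap:gapsum} is then immediate: dividing by $N$ and using termwise non-negativity at $\realoptu$ bounds the average by $M(\realoptu)/N$. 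For \ref{item:bregpdps:unaccelerated-gap:erggap} the key observation is that $u\mapsto\GenGap^\lagrangian(u,\realoptu)$ is convex, since $F(x)$, $K(x,\realopty)$, $G_*(y)$, and $-K(\realoptx,y)$ are all convex in $u=(x,y)$ while the remaining terms are constant; Jensen's inequality then gives $\GenGap^\lagrangian(\tilde u^N,\realoptu)\le\frac{1}{N}\sum_{k=0}^{N-1}\GenGap^\lagrangian(\nextu,\realoptu)\le M(\realoptu)/N$, with non-negativity inherited from the saddle-point step. For \ref{item:bregpdps:unaccelerated-gap:partgap} I would apply the same Jensen bound for each fixed $\optu\in\Omega$ and then take the supremum, obtaining $\GenGap_\Omega(\tilde u^N)=\sup_{\optu\in\Omega}\GenGap^\lagrangian(\tilde u^N,\optu)\le\frac{1}{N}\sup_{\optu\in\Omega}M(\optu)$, while picking any $\realoptu\in\Omega\isect\inv H(0)$ shows $\GenGap_\Omega(\tilde u^N)\ge\GenGap^\lagrangian(\tilde u^N,\realoptu)\ge 0$.

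The main obstacle is the finiteness of $\sup_{\optu\in\Omega}M(\optu)$ in \ref{item:bregpdps:unaccelerated-gap:partgap}: in infinite dimensions mere continuity of $M$ on a bounded set $\Omega$ does not force boundedness, as bounded sets need not be relatively compact. I would close this gap by appealing to the concrete form $M(\optu)=B^0(\optu,u^0)$ furnished by the descent inequality \eqref{eq:abstractbreg:result} of \cref{thm:abstractbreg} (the other two non-negative sums being dropped), which is bounded on bounded sets; combined with the continuity hypothesis on $M$ and boundedness of $\Omega$, this yields the finite constant needed for the $O(1/N)$ rate. The only other point requiring care is confirming that each summand is genuinely non-negative, which again rests on $\realoptu$ being a saddle point.
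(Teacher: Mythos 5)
Your proof is correct and follows essentially the same route as the paper's: the same four subgradient/convexity inequalities are summed to obtain \eqref{eq:bregpdps:condition2} with $\GenGap^\lagrangian$ on all of $X\times Y$; non-negativity at a root is the saddle-point property (the paper writes out the equivalent chain of inequalities at $\realoptu$ directly, via \eqref{eq:bregpdps:gap-kineq-2} and the inclusions $-D_xK(\realoptx,\realopty)\in\subdiff F(\realoptx)$, $D_yK(\realoptx,\realopty)\in\subdiff G_*(\realopty)$, rather than citing the saddle-point characterisation, but the computation is identical); and \ref{item:bregpdps:unaccelerated-gap:gapsum}--\ref{item:bregpdps:unaccelerated-gap:partgap} follow by dividing by $N$, Jensen's inequality (your explicit check that $u\mapsto\GenGap^\lagrangian(u,\optu)$ is convex is the justification the paper leaves implicit), and taking a supremum. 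The one point where you genuinely add something is \ref{item:bregpdps:unaccelerated-gap:partgap}: you are right that $M\in C(X\times Y)$ alone does not force $\sup_{\optu\in\Omega}M(\optu)<\infty$ on a merely bounded $\Omega$ in infinite dimensions, whereas the paper simply asserts that ``$M$ is bounded on bounded sets''; your patch via the concrete choice $M(\optu)=B^0(\optu,u^0)$ supplied by the descent inequality \eqref{eq:abstractbreg:result} is exactly how the lemma is instantiated in \cref{thm:convergence:gap}, and is the right way to close that small gap.
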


The convergence results in \cref{lemma:bregpdps:unaccelerated-gap} are \term{ergodic} because they apply to sequences of running averages.
To understand the partial gap, we recall that with $K(x,y)=\dualprod{Ax}{y}$ bilinear Fenchel--Rockafellar's theorem show that the \term[gap!duality]{duality gap} $\GenGap^D(u) \defeq [F(x) + G_*(Ax)] + [F_*(-A^*y) + G_*^*(y)] \ge 0 $ and is zero \emph{if and only if} $u \in \inv H(0)$.
%Observing the definition of the convex conjugate,
The duality gap can be written $\GenGap^D(u) = \GenGap_{X \times Y}(u)$.

\begin{proof}
    By the convex-concavity of $K$ and the definition of the subdifferential,
    \begin{equation*}
        %\label{eq:bregpdps:gap-kineq-1}
        \begin{aligned}[t]
            \dualprod{D_x &K(x,y)}{x-\optx}
            -\dualprod{D_y K(x,y)}{y-\opty}
            \\
            &
            \ge
            [K(x, y)
            - K(\optx, y)]
            -[K(x,y)
            - K(x, \opty)]
            %\\
            %&
            =
            K(x,\opty)-K(\optx, y).
        \end{aligned}
    \end{equation*}
    for all $(x, y) \in X \times Y$. Also using $x^* \in \subdiff F(\nextx)$ and $y^* \in \subdiff G_(\nexty)$ with the definition of the convex subdifferential, we see that $\GenGap=\GenGap^\lagrangian$ satisfies \eqref{eq:bregpdps:condition2}.
    The non-negativity of $\GenGap(\freevar, \realoptu)$ follows by similar reasoning, first using that
    \begin{equation}
        \label{eq:bregpdps:gap-kineq-2}
        K(x, \realopty)
        -K(\realoptx, y)
        \ge
        \dualprod{D_x K(\realoptx, \realopty)}{x-\realoptx}
        -\dualprod{D_y K(\realoptx, \realopty)}{y-\realopty}
    \end{equation}
    for all $(x, y) \in X \times Y$, and following by the definition of the subdifferential applied to $-D_x K(\realoptx, \realopty) \in \subdiff F(\realoptx)$ and $D_y K(\realoptx, \realopty) \in \subdiff G_*(\realopty)$.

    For \ref{item:bregpdps:unaccelerated-gap:gapsum}--\ref{item:bregpdps:unaccelerated-gap:partgap}, we first observe that the semi-ellipticity of $B^0$ and \eqref{eq:bregpdps:condition2} imply $\sum_{k=0}^{N-1} \GenGap^\lagrangian(\nextu, \optu) \le M(\optu)$. Dividing by $N$ and using that $\GenGap^\lagrangian(\nextu, \realoptu) \ge 0$ for $\optu \in \inv H(0)$, we obtain \ref{item:bregpdps:unaccelerated-gap:gapsum}. Jensen's inequality then gives $\GenGap^\lagrangian(\nexxt{\tilde u}, \optu) \le M(\optu)/N$, hence \ref{item:bregpdps:unaccelerated-gap:erggap} for $\optu \in \inv H(0)$. Finally, taking the supremum over $\optu \in \Omega$ gives \ref{item:bregpdps:unaccelerated-gap:partgap} because $M$ is bounded on bounded sets.
\end{proof}

In the following theorem, we may in particular take $K(x,y)=\dualprod{Ax}{y}$ bilinear, or $K(x,y)=\dualprod{Ax}{y}+E(x)$ with $E$ convex. \Cref{lemma:bregpdps:distkplus1-for-k-lowerbound,ex:bregpdps:bk0pos-nonconvex-e,ex:bregpdps:bk0pos-linear} provide step length conditions that ensure the semi-ellipticity required of $B^0$ in \cref{thm:convergence:gap}.

\begin{theorem}[Gap convergence]\index{convergence!gap}
    \label{thm:convergence:gap}
    Let $F: X \to \extR$ and $G_*: Y \to \extR$ be convex, proper, and lower semicontinuous. Also let $K \in C^1(X \times Y)$ be convex-concave within $\dom F \times \dom G_*$. Finally, let $J_X \in C^1(X)$ and $J_Y \in C^1(Y)$ convex.
    If $B^0$ is semi-elliptic, then the iterates $\{\nextu\}_{k \in \N}$ generated by the PDBS \eqref{eq:bregpdps:alg:expanded} for any initial $u^0 \in X \times Y$ satisfy \cref{lemma:bregpdps:unaccelerated-gap}\,\ref{item:bregpdps:unaccelerated-gap:gapsum}--\ref{item:bregpdps:unaccelerated-gap:partgap}.
\end{theorem}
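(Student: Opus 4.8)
The plan is to reduce the theorem to the ``Moreover'' part of \cref{lemma:bregpdps:unaccelerated-gap}: that lemma already delivers conclusions \ref{item:bregpdps:unaccelerated-gap:gapsum}--\ref{item:bregpdps:unaccelerated-gap:partgap} provided one can exhibit a non-negative, continuous bound $M(\optu)$ with $\sum_{k=0}^{N-1}\GenGap^\lagrangian(\nextu,\optu)\le M(\optu)$ for all $\optu$ and all $N$. So the entire task is to produce this uniform summability estimate and check the regularity of $M$.

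First I would invoke the opening assertion of \cref{lemma:bregpdps:unaccelerated-gap}: since $F$ and $G_*$ are convex, proper, and lower semicontinuous and $K$ is convex-concave on $\dom F\times\dom G_*$, the second-order condition \eqref{eq:bregpdps:condition2} holds for every $\optu\in X\times Y$ with $\Omega_\optu=X\times Y$ and $\GenGap=\GenGap^\lagrangian$. Here I would remark that the PDBS updates \eqref{eq:bregpdps:alg:expanded} force $x_{k+1}^*\in\subdiff F(\nextx)$ and $y_{k+1}^*\in\subdiff G_*(\nexty)$, so each iterate automatically lies in $\dom\subdiff F\times\dom\subdiff G_*\subset\dom F\times\dom G_*$, precisely the region where convex-concavity is assumed and where $\GenGap^\lagrangian(\nextu,\optu)$ is finite. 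Because $\Omega_\optu=X\times Y$ is the whole space, the containment hypothesis $\{\thisu\}_{k\in\N}\subset\Omega_\optu$ of \cref{lemma:bregpdps:convergence} is trivially met.

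Next I would apply \cref{lemma:bregpdps:convergence} with $B=B^0$, which through \cref{thm:abstractbreg} yields the descent inequality \eqref{eq:abstractbreg:result}, i.e.
\[
    B^0(\optu,u^N)+\sum_{k=0}^{N-1}B^0(\nextu,\thisu)+\sum_{k=0}^{N-1}\GenGap^\lagrangian(\nextu,\optu)\le B^0(\optu,u^0)
\]
for all $N\ge1$ and every $\optu\in X\times Y$. Since $B^0$ is semi-elliptic it is non-negative, so the first two sums may be discarded, leaving $\sum_{k=0}^{N-1}\GenGap^\lagrangian(\nextu,\optu)\le B^0(\optu,u^0)=:M(\optu)$, and semi-ellipticity also gives $M(\optu)\ge0$. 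This is exactly the summability hypothesis of the ``Moreover'' part of \cref{lemma:bregpdps:unaccelerated-gap}, so conclusions \ref{item:bregpdps:unaccelerated-gap:gapsum} and \ref{item:bregpdps:unaccelerated-gap:erggap} follow at once.

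For the partial gap \ref{item:bregpdps:unaccelerated-gap:partgap} I must additionally check $M\in C(X\times Y)$. Because $J^0=J_X+J_Y-K\in C^1(X\times Y)$ under the stated hypotheses, the map $\optu\mapsto M(\optu)=J^0(\optu)-J^0(u^0)-\dualprod{DJ^0(u^0)}{\optu-u^0}$ is continuous, hence bounded on bounded sets, which is what \ref{item:bregpdps:unaccelerated-gap:partgap} requires of the supremum over a bounded $\Omega$. I expect the proof to be essentially bookkeeping; the only genuine checks are that convex-concavity is available at the iterates (via their membership in $\dom F\times\dom G_*$) and that $M=B^0(\freevar,u^0)$ is non-negative and continuous. \emph{The most delicate point is the latter regularity of $M$}, needed so that the partial gap is well-behaved on the bounded set $\Omega$, but it reduces immediately to $J^0\in C^1$.
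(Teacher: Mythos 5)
Your proposal is correct and follows essentially the same route as the paper's own proof: establish \eqref{eq:bregpdps:condition2} with $\GenGap=\GenGap^\lagrangian$ via \cref{lemma:bregpdps:unaccelerated-gap}, obtain the descent inequality \eqref{eq:abstractbreg:result} from \cref{lemma:bregpdps:convergence}, discard the non-negative Bregman terms using semi-ellipticity to get $M(\optu)=B^0(\optu,u^0)$, verify its continuity from $J^0\in C^1$, and conclude with the ``Moreover'' part of the lemma. Your added remark that the iterates lie in $\dom\subdiff F\times\dom\subdiff G_*\subset\dom F\times\dom G_*$ is a point the paper leaves implicit, but it does not change the argument.
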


\begin{proof}
    By \cref{lemma:bregpdps:unaccelerated-gap}, holds with $\GenGap=\GenGap^\lagrangian$
    Hence by \cref{lemma:bregpdps:convergence}, \eqref{eq:abstractbreg:result} holds.
    Since $B^0$ is semi-elliptic, this implies that that $\sum_{k=0}^{N-1} \GenGap(\nextu, \optu) \le M(\optu) \defeq B^0(\optu, u^0)$ for all $N \in \N$.
    Since $J_X, J_Y$, and $K$ are continuously differentiable, $M \in C^1(X \times Y)$.
    The rest follows from the second part of \cref{lemma:bregpdps:unaccelerated-gap}.
\end{proof}

%%%%%%%%%%%%%%%%%%%%%%%%%%%%%%%%%%%%%%%%%%%%%%%
\section{Inertial terms}
\label{sec:inertial}
%%%%%%%%%%%%%%%%%%%%%%%%%%%%%%%%%%%%%%%%%%%%%%%

We now generalise \eqref{eq:abstractbreg:pp}, making the involved Bregman divergences dependent on the iteration $k$ and earlier iterates:\index{inertia}
\begin{equation}
    \label{eq:inertiabreg:pp}
    \tag{IPP}
    0 \in H(\nextu) + D_1 B_{k+1}(\nextu,\thisu) + D_1 B_{k+1}^-(\thisu,\prevu),
\end{equation}
for $B_{k+1} \defeq B_{J_{k+1}}$ and $B_{k+1}^- \defeq B_{J_{k+1}^-}$ generated by $J_{k+1}, J_{k+1}^-: U \to \R$.
We take $u^{-1} \defeq u^0$ for this to be meaningful for $k=0$.
Our main reason for introducing the dependence on $\prevu$ is improve \cref{eq:bregpdps:alg:expanded,eq:bregpdps:alg:hilbert} to be explicit in $K$ when $K$ is not affine in $y$: otherwise the dual step of those methods is in general not practical to compute unlike the affine case of \cref{rem:bregpdps:nonlin}.
Along the way we also construct a more conventional inertial method.

%%%%%%%%%%%%%%%%%%%%%%%%%%%%%%%%%%%%%%%%%%%%%%%
\subsection{A generalisation of the fundamental theorem}
%%%%%%%%%%%%%%%%%%%%%%%%%%%%%%%%%%%%%%%%%%%%%%%

We realign indices to get a simple fundamental condition to verify on each iteration:

\begin{theorem}
    \label{thm:inertiabreg}
    On a Banach space $U$, let $H: U \setto U^*$, and let $J_k,J_k^-: U \to \extR$ be Gâteaux-differentiable with the corresponding Bregman divergences $B_k \defeq B_{J_k}$ and $B_k^- \defeq B_{J_k^-}$ for all $k=1,\ldots,N$.
    Suppose \eqref{eq:inertiabreg:pp} is solvable for $\{\nextu\}_{k \in \N}$ given an initial iterate $u^0 \in U$. If for all $k=0,\ldots,N-1$, for some $\optu \in U$ and $\GenGap(\nextu, \optu) \in \R$, for all $\nexxt h \in H(\nextu)$ the modified \term{fundamental condition}
    \begin{equation}
        \label{eq:inertiabreg:condition}
        \tag{IC}
        \dualprod{\nexxt{h}}{\nextu-\optu}
        \ge
        [(B_{k+2}+B_{k+3}^-) - (B_{k+1}+B_{k+2}^-)](\optu, \nextu)
        + \GenGap(\nextu, \optu)
    \end{equation}
    holds, and $B_{k+1}^-$ satisfies the \term{general Cauchy inequality}
    \begin{equation}
        \label{eq:inertiabreg:cauchy}
        \dualprod{D_1 B_{k+1}^-(\thisu, u)}{\thisu-u'}
        \le B_{k+1}'(\thisu, u) + B_{k+1}''(u', \thisu)
        \quad (u,u' \in X)
    \end{equation}
    for some $B_{k+1}',B_{k+1}'': U \times U \to \R$,
    then we have the modified \term{descent inequality}
    \begin{multline}
        \label{eq:inertiabreg:result}
        \tag{ID}
        [B_{N+1}+B_{N+2}^--B_{N+1}''](\optu, u^N)
        + \sum_{k=0}^{N-1} [B_{k+1}+B_{k+2}^- - B_{k+1}'' -B_{k+2}'](\nextu,\thisu)
        \\
        + \sum_{k=0}^{N-1} \GenGap(\nextu, \optu)
        \le
        [B_1+B_2^-](\optu, u^0).
    \end{multline}
\end{theorem}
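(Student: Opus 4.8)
The plan is to mirror the proof of \cref{thm:abstractbreg}, the only new feature being that \eqref{eq:inertiabreg:pp} now carries the extra inertial divergence $D_1 B_{k+1}^-(\thisu, \prevu)$. First I would write \eqref{eq:inertiabreg:pp} as $0 = \nexxt h + D_1 B_{k+1}(\nextu, \thisu) + D_1 B_{k+1}^-(\thisu, \prevu)$ for some $\nexxt h \in H(\nextu)$, and test it with $\dualprod{\freevar}{\nextu-\optu}$. To the primary divergence I apply the three-point identity \eqref{eq:bregman:three-point-identity}, giving the exact term $B_{k+1}(\optu,\nextu) - B_{k+1}(\optu,\thisu) + B_{k+1}(\nextu,\thisu)$, and then insert the fundamental condition \eqref{eq:inertiabreg:condition} as a lower bound for $\dualprod{\nexxt h}{\nextu-\optu}$. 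Writing $V_{k+1} \defeq B_{k+1} + B_{k+2}^-$, the point of phrasing \eqref{eq:inertiabreg:condition} through the difference $V_{k+2}-V_{k+1}$ is that the $B_{k+1}$ component of $V_{k+1}$ cancels the $B_{k+1}(\optu,\nextu)$ produced by the three-point identity, leaving $V_{k+2}(\optu,\nextu)$ as the next energy together with a stray $-B_{k+2}^-(\optu,\nextu)$ destined to telescope.

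The crux is the inertial term $\dualprod{D_1 B_{k+1}^-(\thisu,\prevu)}{\nextu-\optu}$, which I would handle by splitting the test direction as $\nextu-\optu = (\nextu-\thisu) + (\thisu-\optu)$. On the second piece the first argument $\thisu$ of $B_{k+1}^-$ matches, so the three-point identity \eqref{eq:bregman:three-point-identity} applies exactly and yields $B_{k+1}^-(\optu,\thisu) - B_{k+1}^-(\optu,\prevu) + B_{k+1}^-(\thisu,\prevu)$. On the first piece, rewriting $\nextu-\thisu = -(\thisu-\nextu)$, I invoke the general Cauchy inequality \eqref{eq:inertiabreg:cauchy} with $u=\prevu$ and $u'=\nextu$ to bound it below by $-B_{k+1}'(\thisu,\prevu) - B_{k+1}''(\nextu,\thisu)$; these are exactly the primed and double-primed quantities that appear in \eqref{eq:inertiabreg:result}.

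Summing the resulting per-iteration inequality over $k=0,\ldots,N-1$, two telescopings occur. The $V$-energies collapse to $V_{N+1}(\optu,u^N) - V_1(\optu,u^0)$, the latter being the right-hand side $[B_1+B_2^-](\optu,u^0)$ of \eqref{eq:inertiabreg:result}; and the stray $-B_{k+2}^-(\optu,\nextu)$ left at step $k$ cancels against the $+B_{k+2}^-(\optu,\nextu)$ produced by the three-point expansion of the inertial term at step $k+1$. This is precisely the index realignment that motivates the shifted form of \eqref{eq:inertiabreg:condition}. Because $\prevu=\thisu$ at $k=0$, the inertial term vanishes there and no Cauchy estimate is invoked, which is why the initial energy carries no double-primed correction.

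The main obstacle is the boundary at the terminal index. After telescoping, an uncancelled remainder of the shape $B_{N+1}^-(\optu,u^{N-1}) - B_{N+1}^-(\optu,u^N) - B_{N+1}^-(u^N,u^{N-1}) + B_{N+1}'(u^N,u^{N-1}) + B_{N+1}''(\optu,u^N)$ survives. I would recombine its three $B_{N+1}^-$ pieces, via the three-point identity \eqref{eq:bregman:three-point-identity}, into $-\dualprod{D_1 B_{N+1}^-(u^N,u^{N-1})}{u^N-\optu}$, and then apply the general Cauchy inequality \eqref{eq:inertiabreg:cauchy} one last time, now with $u'=\optu$, to obtain $\dualprod{D_1 B_{N+1}^-(u^N,u^{N-1})}{u^N-\optu} \le B_{N+1}'(u^N,u^{N-1}) + B_{N+1}''(\optu,u^N)$. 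This shows the remainder is non-negative, so it may be discarded; what is left is exactly \eqref{eq:inertiabreg:result}, the subtracted $-B_{N+1}''(\optu,u^N)$ in the final energy being the term supplied by this last estimate. The delicate part throughout is the index bookkeeping — confirming that the $B_{k+2}^-$ and $B_{k+2}'$ carried by \eqref{eq:inertiabreg:condition} and by the next step's Cauchy estimate line up across iterations, and that the Cauchy inequality is indeed available at the terminal index $N+1$.
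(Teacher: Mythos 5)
Your proof is correct and is essentially the paper's own argument in a different order: the paper sums first, re-indexes the inertial derivatives so that $D_1 B_{k+2}^-(\nextu,\thisu)$ joins $D_1 B_{k+1}(\nextu,\thisu)$ (collecting the correction term $S_N$), and then applies the three-point identity once to the combined divergence $B_{k+1}+B_{k+2}^-$, whereas you apply the three-point identity to each divergence at its natural arguments and let the same cancellations occur in the sum. Your split $\nextu-\optu=(\nextu-\thisu)+(\thisu-\optu)$, the per-step Cauchy estimate with $u'=\nextu$, and the terminal Cauchy estimate with $u'=\optu$ reproduce exactly the paper's treatment of $S_N$, so the two derivations coincide term by term (including the same harmless $k=0$ boundary convention arising from $u^{-1}=u^0$).
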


\begin{proof}
    We can write \eqref{eq:inertiabreg:pp} as
    \begin{equation}
        \label{eq:inertiabreg:pp-expand}
        0=\nexxt h + D_1 B_{k+1}(\nextu, \thisu) + D_1 B_{k+1}^-(\thisu, \prevu)
        \ \ \text{for some}\ \ %
        \nexxt h \in H(\nextu).
    \end{equation}
    Testing \eqref{eq:inertiabreg:pp} by applying $\dualprod{\freevar}{\nextu-\optu}$ we obtain
    \begin{equation*}
        0 = \dualprod{\nexxt{h} + D_1 B_{k+1}(\nextu, \thisu)+D_1 B_{k+1}^-(\thisu, \prevu)}{\nextu-\optu}.
    \end{equation*}
    Summing over $k=0,\ldots,N-1$ and using $u^{-1}=u^0$ to eliminate $B_1^-(u^0, u^{-1})=0$, we rearrange
    \begin{gather}
        \label{eq:inertiabreg:est0}
        0 = S_N + \sum_{k=0}^{N-1} \dualprod{\nexxt{h} + D_1[B_{k+1}+B_{k+2}^-](\nextu, \thisu)}{\nextu-\optu}
        \shortintertext{for}
        \notag
        %\begin{aligned}[t]
        S_N %&
        \defeq
        \dualprod{D_1 B_{J_{N+1}^-}(u^N,u^{N-1})}{\optu-u^N}
        %\\
        %\MoveEqLeft[-1]
        +\sum_{k=0}^{N-1}
        %\left(
            %B_{k+1}^0 (\nextu,\thisu)
            %+
            \dualprod{D_1 B_{J_{k+1}^-}(\thisu,\prevu)}{\nextu-\thisu}.
        %\right)
        %\end{aligned}
    \end{gather}
    Abbreviating $\bar B_{k+1} \defeq B_{k+1}+B_{k+2}^-$ and using  \eqref{eq:inertiabreg:condition} and the three-point identity \eqref{eq:bregman:three-point-identity} in \eqref{eq:inertiabreg:est0} we obtain
    \[
        0 \ge S_N
        +
        \sum_{k=0}^{N-1}\left(
        \bar B_{k+2}(\optu, \nextu)
        -\bar B_{k+1}(\optu, \thisu)
        + \bar B_{k+1}(\nextu, \thisu)
        + \GenGap(\nextu, \optu)
        \right).
    \]
    Using the generalised Cauchy inequality \eqref{eq:inertiabreg:cauchy} and, again, that  $u^{-1}=u^0$, we get
    \[
        \begin{aligned}[t]
        S_N &
        \ge
        -B_{N+1}'(u^N,u^{N-1})-B_{N+1}''(\optu, u^N)
        %\\
        %\MoveEqLeft[-1]
        -\sum_{k=0}^{N-1}
        \left(
            B_{k+1}'(\thisu,\prevu)
            +B_{k+1}''(\nextu,\thisu)
        \right)
        \\
        &
        =
        -B_{N+1}''(\optu, u^N)
        -\sum_{k=0}^{N-1} [B_{k+1}''+B_{k+2}'](\nextu,\thisu).
        \end{aligned}
    \]
    These two inequalities yield \eqref{eq:inertiabreg:result}.
\end{proof}

%%%%%%%%%%%%%%%%%%%%%%%%%%%%%%%%%%%%%%%%%%%%%%%
\subsection{Inertia (almost) as usually understood}
\label{sec:inertiabreg:inertia}
%%%%%%%%%%%%%%%%%%%%%%%%%%%%%%%%%%%%%%%%%%%%%%%

We take $J_{k+1}=J^0$ and $J_{k+1}^-=-\lambda_k J^0$ for some $\lambda_k \in \R$.
%Similarly to \eqref{eq:bregpdps:alg:expanded},
We then expand \eqref{eq:inertiabreg:pp} as
\begin{alg}{Inertial PDBS}%
    \index{primal-dual!splitting!inertial}%
    Iteratively over $k \in \N$, solve for $\nextx$ and $\nexty$:
    \begin{equation}
        \label{eq:bregpdps-inertia:alg}
        %\left\{
            \begin{aligned}
                (1+\lambda_k)& [DJ_X(\thisx)-D_x K(\thisx, \thisy)]-\lambda_k [DJ_X(\prevx)-D_xK(\prevx,\prevy)]
                \\ & \in DJ_X(\nextx) + \subdiff F(\nextx),
                \\
                (1+\lambda_k)&[DJ_Y(\thisy)-D_y K(\thisx, \thisy)]-\lambda_k[DJ_Y(\prevy)-D_yK(\prevx,\prevy)]
                \\ & \in DJ_Y(\nexty) + \subdiff G_*(\nexty)-2D_y K(\nextx, \nexty)
            \end{aligned}
        %\right.
    \end{equation}
\end{alg}
If $X$ and $Y$ are Hilbert spaces with $J_X=\inv\tau N_X$ and $J_Y=\inv\sigma N_Y$ the standard generating functions divided by some step length parameters $\tau,\sigma>0$, and $K(x,y)=\dualprod{Ax}{y}$ for $A \in \linear(X; Y)$, \eqref{eq:bregpdps-inertia:alg} reduces to the inertial method of \cite{chambolle2014ergodic}:
\begin{alg}{Inertial PDPS for bilinear $K$}%
    With initial $\tilde x^0=x^0$ and $\tilde y^0=y^0$, iterate over $k \in \N$:
    \begin{equation}
        \label{eq:bregpdps-inertia:hilbert-linear}
        %\left\{
            \begin{aligned}
                \nextx & \defeq \prox_{\tau F}(\this{\tilde x} - \tau A^*\this{\tilde y}), \\
                \nexty & \defeq \prox_{\sigma G_*}(\this{\tilde y} + \sigma A(2\nextx-\this{\tilde x})), \\
                \nexxt{\tilde x} & \defeq (1+\lambda_{k+1})\nextx-\lambda_{k+1}\thisx, \\
                \nexxt{\tilde y} & \defeq (1+\lambda_{k+1})\nexty-\lambda_{k+1}\thisy. \\
            \end{aligned}
        %\right.
    \end{equation}
\end{alg}
More generally, however, \eqref{eq:bregpdps-inertia:alg} does not directly apply inertia to the iterates. It applies inertia to $K$.

The general Cauchy inequality \eqref{eq:inertiabreg:cauchy} automatically holds by the three-point identity \eqref{eq:bregman:three-point-identity} with $J_{k+1}''=J_{k+1}'=J_{k+1}^-$ if $B_{k+1}^- \ge 0$, which is to say that $J_{k+1}^-$ is convex. This is the case if $\lambda_k \le 0$. For usual inertia we, however, want $\lambda_k > 0$.
We will therefore use  \cref{lemma:bregman:cauchy}, requiring:

\begin{assumption}
    \label{ass:bregpdps-inertia:main}
    For some $\beta>0$, in a domain $\Omega \subset X \times Y$,
    \begin{equation}
        \label{eq:bregpdps-inertia:cauchy-ass}
        \abs{\dualprod{D_1 B^0(\thisu, u)}{\thisu-u}}
        \le B^0(\thisu, u) + \beta B^0(u', \thisu)
        \quad (u, u', \thisu \in \Omega).
    \end{equation}
    Moreover, the parameters $\{\lambda_k\}_{k \in \N}$ are non-increasing and for some $\epsilon>0$,
    \begin{equation}
        \label{eq:bregpdps-inertia:lambda-bound}
        0 \le \lambda_{k+1} \le \frac{1-\epsilon-\lambda_k \beta}{2}
        \quad (k \in \N).
    \end{equation}
\end{assumption}

\begin{example}
    \label{ex:bregpdps-inertia:main:0}
    Suppose the generating function $J^0$ is $\gamma$-strongly subdifferentiable (i.e., $B^0$ is $\gamma$-elliptic, see \cref{sec:convergence:ellipticity,sec:convergence:ellipticity:block}) within $\Omega \subset X \times Y$ and satisfies the subdifferential smoothness property \eqref{eq:smoothness-grad-smoothness} with the factor $L>0$. Then by \cref{lemma:bregman:cauchy}, \eqref{eq:bregpdps-inertia:cauchy-ass} holds with $\beta=L\inv\gamma$ in some domain $\Omega \subset X \times Y$.

    As a particular case, let $X$ and $Y$ be Hilbert spaces with the standard generating functions $J_X=\inv\tau N_X$, $J_Y=\inv\sigma N_Y$. Also let $DK$ be $L_{DK}$-Lipschitz within $\Omega$. Then $J^0$ is Lipschitz with factor $L=\max\{\inv\sigma,\inv\tau\}+L_{DK}$.
    Consequently  the required subdifferential smoothness property \eqref{eq:smoothness-grad-smoothness} holds with the same factor $L$; see \cite[Theorem 18.15]{bauschke2017convex} or \cite[Appendix C]{tuomov-proxtest}.
    
    We computed $L_{DK}$ for some specific $K$ in \cref{sec:convergence:ellipticity}.
\end{example}

\begin{example}
    \label{ex:bregpdps-inertia:main:1}
    If $K(x,y)=\dualprod{Ax}{y}$ with $A \in \linear(X; Y^*)$, and if $J_X=\inv\tau N_X$, $J_Y=\inv\sigma N_Y$,in Hilbert spaces $X$ and $Y$, then $B^0(u', u)=\tfrac{1}{2\tau}\norm{x-x'}^2+\tfrac{1}{2\sigma}\norm{y-y'}^2+\dualprod{A(x-x')}{y-y'}$. By standard Cauchy inequality, \eqref{eq:bregpdps-inertia:cauchy-ass} holds for $\beta=1$ in $\Omega=X \times Y$.
    Consequently the next example recovers the upper bound for $\lambda$ in \cite{chambolle2014ergodic}:
\end{example}

\begin{example}
    \label{ex:bregpdps-inertia:main:2}
    The bound \eqref{eq:bregpdps-inertia:lambda-bound} holds for some $\epsilon>0$ if $\lambda_k \equiv \lambda$ for $0 \le \lambda < 1/(2+\beta)$.
\end{example}

\begin{lemma}
    \label{lemma:bregpdps-inertia:mainlemma}
    Suppose \cref{ass:bregpdps-inertia:main} holds and that \eqref{eq:bregpdps:condition2} holds within $\Omega_\optu$ for some $\optu \in \Omega$ and $\GenGap(u, \optu)$. Given $u^0 \in \Omega$, suppose the iterates generated by the inertial PDBS \eqref{eq:bregpdps-inertia:alg} satisfy $\{\thisu\}_{k=0}^N \subset \Omega_\optu \isect \Omega$. Then
    \begin{equation}
        \label{eq:inertiabreg:inertia:result}
        \epsilon B^0(\optu, u^N)
        + \epsilon \sum_{k=0}^{N-1} B^0(\nextu,\thisu)
        + \sum_{k=0}^{N-1} \GenGap(\nextu, \optu)
        \le
        (1-\lambda_1)B^0(\optu, u^0).
    \end{equation}
\end{lemma}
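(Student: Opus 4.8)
The plan is to obtain \eqref{eq:inertiabreg:inertia:result} as a direct specialisation of the descent inequality \eqref{eq:inertiabreg:result} of \cref{thm:inertiabreg}. With the choices $J_{k+1}=J^0$ and $J_{k+1}^-=-\lambda_k J^0$ of this subsection, and since the Bregman divergence is linear in its generating function, every $B_{k+1}=B^0$ and every $B_{k+1}^-=-\lambda_k B^0$. The work therefore splits into verifying the two hypotheses of \cref{thm:inertiabreg}, namely the modified fundamental condition \eqref{eq:inertiabreg:condition} and the general Cauchy inequality \eqref{eq:inertiabreg:cauchy}, and then reading off the coefficients produced in \eqref{eq:inertiabreg:result}.

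First I would verify \eqref{eq:inertiabreg:cauchy}. Since $D_1 B_{k+1}^-=-\lambda_k D_1 B^0$ with $\lambda_k\ge 0$, the assumed Cauchy estimate \eqref{eq:bregpdps-inertia:cauchy-ass} gives $\dualprod{D_1 B_{k+1}^-(\thisu,u)}{\thisu-u'}\le\lambda_k\bigl[B^0(\thisu,u)+\beta B^0(u',\thisu)\bigr]$, so \eqref{eq:inertiabreg:cauchy} holds with $B_{k+1}'=\lambda_k B^0$ and $B_{k+1}''=\lambda_k\beta B^0$. For the fundamental condition, \cref{lemma:bregpdps:convergence} rewrites the standing second-order condition \eqref{eq:bregpdps:condition2} as $\dualprod{\nexxt h}{\nextu-\optu}\ge\GenGap(\nextu,\optu)$ for the PDBS operator $H$ of \eqref{eq:convergence:h}, while the divergence bracket appearing in \eqref{eq:inertiabreg:condition} collapses to $[(B_{k+2}+B_{k+3}^-)-(B_{k+1}+B_{k+2}^-)](\optu,\nextu)=(\lambda_{k+1}-\lambda_{k+2})B^0(\optu,\nextu)$.

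Granting \eqref{eq:inertiabreg:condition}, I would substitute into \eqref{eq:inertiabreg:result}. The boundary term is $[B_{N+1}+B_{N+2}^--B_{N+1}''](\optu,u^N)=(1-\lambda_{N+1}-\lambda_N\beta)B^0(\optu,u^N)$, each iterate summand is $[B_{k+1}+B_{k+2}^--B_{k+1}''-B_{k+2}'](\nextu,\thisu)=(1-2\lambda_{k+1}-\lambda_k\beta)B^0(\nextu,\thisu)$, and the right-hand side is $[B_1+B_2^-](\optu,u^0)=(1-\lambda_1)B^0(\optu,u^0)$, already the claimed constant. The step-length bound \eqref{eq:bregpdps-inertia:lambda-bound}, i.e.\ $2\lambda_{k+1}+\lambda_k\beta\le 1-\epsilon$, forces every such coefficient to be at least $\epsilon$ (for the boundary term one additionally drops $\lambda_{N+1}\ge 0$), which promotes \eqref{eq:inertiabreg:result} to \eqref{eq:inertiabreg:inertia:result}. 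Throughout one uses $B^0\ge 0$ and the hypothesis $\{\thisu\}\subset\Omega_\optu\isect\Omega$ so that both \eqref{eq:bregpdps:condition2} and \eqref{eq:bregpdps-inertia:cauchy-ass} apply along the trajectory.

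The main obstacle is the non-negative bracket $(\lambda_{k+1}-\lambda_{k+2})B^0(\optu,\nextu)$ in \eqref{eq:inertiabreg:condition}: because $\{\lambda_k\}$ is non-increasing it has the ``wrong'' sign to be dominated directly by $\dualprod{\nexxt h}{\nextu-\optu}\ge\GenGap(\nextu,\optu)$. For constant $\lambda_k\equiv\lambda$ the bracket vanishes and the argument closes at once. In general I would instead absorb the bracket into the generic gap passed to \cref{thm:inertiabreg} and then try to recover the clean right-hand side by a summation-by-parts on the resulting $\sum_k(\lambda_k-\lambda_{k+1})B^0(\optu,\thisu)$, exploiting monotonicity of $\{\lambda_k\}$, non-negativity of $B^0$, and the boundary convention $u^{-1}=u^0$. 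Making this telescoping land exactly on $(1-\lambda_1)B^0(\optu,u^0)$ while keeping the index-shifted coefficients of $B^0(\nextu,\thisu)$ (the factor $2$ on $\lambda_{k+1}$ against $\beta$ on $\lambda_k$) aligned with \eqref{eq:bregpdps-inertia:lambda-bound} is the delicate bookkeeping I expect to occupy most of the proof.
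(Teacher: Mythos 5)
Your proposal follows the same route as the paper's proof: specialise \cref{thm:inertiabreg} to $B_{k+1}=B^0$ and $B_{k+1}^-=-\lambda_k B^0$; obtain \eqref{eq:inertiabreg:cauchy} from \eqref{eq:bregpdps-inertia:cauchy-ass} with $B_{k+1}'=\lambda_k B^0$ and $B_{k+1}''=\lambda_k\beta B^0$; and read off the coefficients $(1-\lambda_{N+1}-\lambda_N\beta)$ and $(1-2\lambda_{k+1}-\lambda_k\beta)$ together with the right-hand constant $[B_1+B_2^-](\optu,u^0)=(1-\lambda_1)B^0(\optu,u^0)$, using \eqref{eq:bregpdps-inertia:lambda-bound} and $\lambda_{N+1}\ge 0$ to bound each coefficient below by $\epsilon$. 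Every one of these steps coincides with the paper's.

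The one point you leave open---the bracket $(\lambda_{k+1}-\lambda_{k+2})B^0(\optu,\nextu)$ in \eqref{eq:inertiabreg:condition}---is exactly the point the paper dispatches in a single sentence, and your unease is warranted rather than a defect of your argument. The paper notes that the bracket is non-negative (since $\{\lambda_k\}$ is non-increasing and $B^0$ is semi-elliptic) and concludes that \eqref{eq:inertiabreg:condition} follows from \eqref{eq:abstractbreg:condition}; but non-negativity makes \eqref{eq:inertiabreg:condition} the \emph{stronger} of the two conditions, so the implication is immediate only when the bracket vanishes, i.e.\ for constant $\lambda_k$ (the case of \cref{ex:bregpdps-inertia:main:2}), which you correctly observe closes at once. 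Do not expect your summation-by-parts repair to rescue the strictly decreasing case in the stated form: absorbing the bracket into the gap leaves residual terms $\sum_{j=1}^{N}(\lambda_j-\lambda_{j+1})B^0(\optu,u^j)$ on the right-hand side of \eqref{eq:inertiabreg:result} that do not telescope down to $(1-\lambda_1)B^0(\optu,u^0)$. In short, your proof is as complete as the paper's own, and more candid about where the real difficulty lies.
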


\begin{proof}
    Since $B_{k+1}=B^0$ and $B_{k+1}^-=-\lambda_k B^0$ for all $k \in \N$,
    \[
        (B_{k+2}+B_{k+3}^-) - (B_{k+1}+B_{k+2}^-)
        =(\lambda_{k+1}-\lambda_{k+2})B^0.
    \]
    Since $\lambda_k$ is decreasing and $B^0$ is semi-elliptic within $\Omega \supset \{\thisu,\optu\}$, we deduce that $(\lambda_{k+1}-\lambda_{k+2})B^0(\optu,\thisu) \ge 0$. Consequently \eqref{eq:inertiabreg:condition} holds if \eqref{eq:abstractbreg:condition} does. By the proof of \cref{lemma:bregpdps:convergence}, \eqref{eq:inertiabreg:condition} then holds if \eqref{eq:bregpdps:condition2} does.
    Using  \eqref{eq:bregpdps-inertia:cauchy-ass}, \eqref{eq:inertiabreg:cauchy} holds with $B_{k+1}'=\lambda_k B_0$ and $B_{k+1}''=\lambda_k \beta B_0$.
    Referring to \cref{thm:inertiabreg}, we now obtain \eqref{eq:inertiabreg:result}.
    We expand
    \begin{align*}
        [B_{N+1} + B_{N+2}^- - B_{N+1}''](\optu, u^N)
        &
        =
        (1-\lambda_{k+1}-\lambda_k\beta)B^0(\optu, u^N)
        \quad\text{and}
        \\
        [B_{k+1} + B_{k+2}^- -B_{k+1}'' -B_{k+2}'](\nextu, \thisu)
        &
        =
        (1-\lambda_{k+1} -\lambda_k\beta -\lambda_{k+1})B^0(\nextu, \thisu).
    \end{align*}
    Since $\optu, u^k \in \Omega$ for all $k=0,\ldots,N$, using the ellipticity of $B^0$ within $\Omega$ as well as \eqref{eq:bregpdps-inertia:lambda-bound} we now estimate the first from below by
    $\epsilon B^0(\optu, u^N)$ and the second by $\epsilon B^0(\nextu, \thisu)$.
    Thus \eqref{eq:inertiabreg:result} produces \eqref{eq:inertiabreg:inertia:result}.
\end{proof}

We may now proceed as in \cref{sec:convergence:gaps,sec:convergence:iterate} to prove convergence.
For the verification of \cref{ass:bregpdps-inertia:main} we can use \cref{ex:bregpdps-inertia:main:0,ex:bregpdps-inertia:main:1,ex:bregpdps-inertia:main:2}.

\begin{theorem}[Convergence, inertial method]
    \index{convergence!gap}
    \index{convergence!weak}
    \index{convergence!strong}
    %\index{convergence!linear}
    \label{thm:bregpdps-inertia:convergence}
    \Cref{thm:convergence:gap,thm:convergence:weak,thm:convergence:strong} apply to the iterates $\{\nextu\}_{k \in \N}$ generated by the inertial PDBS \eqref{eq:bregpdps-inertia:alg} if we replace the assumptions of \mbox{(semi-)}ellipticity of $B^0$ with \cref{ass:bregpdps-inertia:main}.
\end{theorem}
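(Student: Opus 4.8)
The plan is to exploit the fact that \cref{lemma:bregpdps-inertia:mainlemma} has already done the structural work: its conclusion \eqref{eq:inertiabreg:inertia:result} is, up to the harmless factor $\epsilon>0$ multiplying the Bregman terms on the left and the constant $(1-\lambda_1)$ on the right, exactly the descent inequality \eqref{eq:abstractbreg:result} that \cref{lemma:bregpdps:convergence} furnishes in the non-inertial setting. Since the proofs of \cref{thm:convergence:weak,thm:convergence:strong,thm:convergence:gap} invoke \cref{lemma:bregpdps:convergence} and the \mbox{(semi-)}ellipticity of $B^0$ \emph{only} through the resulting inequality \eqref{eq:abstractbreg:result}, I would re-run each of those three proofs essentially verbatim, replacing every appeal to \cref{lemma:bregpdps:convergence} by an appeal to \cref{lemma:bregpdps-inertia:mainlemma} and carrying the constants $\epsilon$ and $1-\lambda_1$ through. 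The hypothesis packaged by \cref{ass:bregpdps-inertia:main} is precisely what \cref{lemma:bregpdps-inertia:mainlemma} needs, and is verifiable in practice via \cref{ex:bregpdps-inertia:main:0,ex:bregpdps-inertia:main:1,ex:bregpdps-inertia:main:2}.

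The strong-convergence and gap claims transfer most directly, as they use only the summability and boundedness read off from the right-hand side. For \cref{thm:convergence:strong} I would take $\optu=\realoptu \in \inv H(0)$ with $\GenGap(\freevar,\realoptu)\ge 0$, discard the non-negative Bregman terms in \eqref{eq:inertiabreg:inertia:result}, and obtain $\sum_{k=0}^{N-1}\GenGap(\nextu,\realoptu)\le (1-\lambda_1)B^0(\realoptu,u^0)$; the summand then tends to zero, and quadratic minorisation together with Jensen's inequality yields the pointwise and ergodic strong convergence exactly as before. For \cref{thm:convergence:gap} I would first invoke \cref{lemma:bregpdps:unaccelerated-gap} to establish \eqref{eq:bregpdps:condition2} with $\GenGap=\GenGap^\lagrangian$ for \emph{every} $\optu$, then apply \eqref{eq:inertiabreg:inertia:result} to get $\sum_{k=0}^{N-1}\GenGap^\lagrangian(\nextu,\optu)\le M(\optu)\defeq(1-\lambda_1)B^0(\optu,u^0)$ uniformly in $\optu$; since $J_X,J_Y,K\in C^1$ make $M\in C^1$, the second half of \cref{lemma:bregpdps:unaccelerated-gap} delivers the ergodic gap rates.

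The weak-convergence claim is where I expect the only genuinely new step. Using the ellipticity of $B^0$ within $\Omega$ (the same ellipticity that underlies \cref{ass:bregpdps-inertia:main} and is employed inside \cref{lemma:bregpdps-inertia:mainlemma}, cf.\ \cref{ex:bregpdps-inertia:main:0}) together with \eqref{eq:inertiabreg:inertia:result}, one gets $\norm{\nextu-\thisu}\to 0$ and boundedness of $\{\norm{\thisu-\realoptu}\}_{k\in\N}$, whence a weak(-$*$) subsequence via Eberlein--{\u S}mulyan or Banach--Alaoglu as in \cref{thm:convergence:weak}. The subtle point, and the step requiring the most care, is passing to the limit in the inertial optimality condition: written as \eqref{eq:inertiabreg:pp} with $B_{k+1}=B^0$ and $B_{k+1}^-=-\lambda_k B^0$, it reads $0\in H(\nextu)+D_1 B^0(\nextu,\thisu)-\lambda_k D_1 B^0(\thisu,\prevu)$, so compared with the non-inertial case there is an \emph{extra} term $-\lambda_k D_1 B^0(\thisu,\prevu)$ that must also vanish along the subsequence. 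Because $\norm{\thisu-\prevu}\to 0$ (again from the summability in \eqref{eq:inertiabreg:inertia:result}, now at the shifted index), $\{\lambda_k\}$ is bounded, and $DJ^0$ is continuous as $J^0\in C^1$, this term tends to zero; hence $0\in\limsup_{k\to\infty}H(\nextu)$, and the weak(-$*$)-to-strong outer semicontinuity of $H$ from \cref{ass:convergence:weak} closes the argument as before. Finally, the containment hypotheses $u^0\in\Omega$ and $\{\thisu\}_{k\in\N}\subset\Omega\isect\Omega_\realoptu$ demanded by \cref{lemma:bregpdps-inertia:mainlemma} must be carried into the adapted statements verbatim.
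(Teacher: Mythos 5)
Your proposal is correct and follows essentially the same route as the paper, whose proof is precisely the substitution of \cref{lemma:bregpdps:convergence} and \eqref{eq:abstractbreg:result} by \cref{lemma:bregpdps-inertia:mainlemma} and \eqref{eq:inertiabreg:inertia:result} in the proofs of \cref{thm:convergence:gap,thm:convergence:weak,thm:convergence:strong}, together with the observation that \cref{ass:bregpdps-inertia:main} implies the \mbox{(semi-)}ellipticity of $B^0$. Your additional care with the extra term $-\lambda_k D_1 B^0(\thisu,\prevu)$ in the limit passage for weak convergence is a detail the paper's one-line proof leaves implicit, and you resolve it correctly.
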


\begin{proof}
    We replace \cref{lemma:bregpdps:convergence} and \eqref{eq:abstractbreg:result} by \cref{lemma:bregpdps-inertia:mainlemma} and \eqref{eq:inertiabreg:inertia:result} in the proofs of \cref{thm:convergence:gap,thm:convergence:weak,thm:convergence:strong}.
    Observe that \cref{ass:bregpdps-inertia:main} implies that $B^0$ is \mbox{(semi-)}elliptic.
\end{proof}

\begin{remark}
    The inertial PDPS is improved in \cite{tuomov-inertia} to yield \index{ergodic}\termnoindex{non-ergodic} convergence of the Lagrangian gap. To do the “inertial unrolling” that leads to such estimates, one, however, needs to correct for the anti-symmetry introduced by $K$ into $H$.
\end{remark}

\begin{remark}
    \label{rem:bregpdps-inertia:linear}
    \index{testing}
    \index{Féjer-monotonicity}
    Since \cref{thm:inertiabreg} does not provide the quantitative $\Delta$-Féjer monotonicity used in \cref{thm:convergence:linear}, we cannot prove linear convergence using our present simplified “testing” approach lacking the “testing parameters” of \cite{tuomov-proxtest}.
\end{remark}

%%%%%%%%%%%%%%%%%%%%%%%%%%%%%%%%%%%%%%%%%%%%%%%
\subsection{Improvements to the basic method without dual affinity}
\label{sec:bregpdps-inertia}
%%%%%%%%%%%%%%%%%%%%%%%%%%%%%%%%%%%%%%%%%%%%%%%

We now have the tools to improve the basic PDBS \eqref{eq:bregpdps:alg:expanded} to enjoy prox-simple steps for general $K$ not affine in $y$. Compared to \eqref{eq:bregpdps:j0} we amend $J_{k+1}=J^0$ by taking
\begin{equation}
    \label{eq:bregpdps-inertia:jk-final}
    \begin{aligned}[t]
    J_{k+1}(x, y)
    &
    \defeq
    J_X(x) + J_Y(y) - K(x, y) + 2K(\nextx, y)
    \\
    &
    = J^0(x, y) + 2 K(\nextx, y).
    \end{aligned}
\end{equation}
This would be enough for $K$ to be explicit in the algorithm, however, proofs of convergence would practically require $G_*$ to be strongly convex even in the convex-concave case.
To fix this, we introduce the inertial term generated by
\begin{equation}
    \label{eq:bregman:jkm-final}
    J_{k+1}^-(u) \defeq [J^0 - J_k](u) = -2K(\thisx, y).
\end{equation}
As always, we write $B_{k+1}$, $B^0$, and $B_{k+1}^-$ for the Bregman divergences generated by $J_{k+1}$, $J^0$, and $J_{k+1}^-$.

Since
\begin{gather*}
    D_1[B_{k+1}-B^0](\thisu, \prevu)+D_1 B_{k+1}^-(\thisu, \prevu)
    = (0, \tilde y_{k+1}^*)
    \shortintertext{for}
    \tilde y_{k+1}^*
    =
    2[
        D_yK(\nextx, \nexty)-D_yK(\nextx,\thisy)
        -D_yK(\thisx, \thisy)+D_yK(\thisx,\prevy)
    ],
\end{gather*}
the algorithm \eqref{eq:inertiabreg:pp} expands similarly to \eqref{eq:bregpdps:alg:expanded} as the
\begin{alg}{Modified PDBS}%
    \index{primal-dual!splitting!Bregman-proximal}%
    \index{primal-dual!splitting!Bregman-proximal!modified}%
    \index{PDBS!modified}%
    Iteratively over $k \in \N$, solve for $\nextx$ and $\nexty$:
    \begin{equation}
        \label{eq:bregpdps-inertia:alg:expanded}
        %\left\{
            \begin{aligned}
                DJ_X(\thisx)- D_x K(\thisx, \thisy) &
                \in DJ_X(\nextx) + \subdiff F(\nextx)
                \quad\text{and}
                \\
                \multispan2{$\displaystyle
                DJ_Y(\thisy)
                +  [2D_y K(\nextx, \thisy)+ D_y K(\thisx, \thisy) - 2D_y(\thisx,\prevy)]$}
                \\
                &
                \in
                DJ_Y(\nexty) + \subdiff G_*(\nexty).
                \end{aligned}
        %\right.
    \end{equation}
\end{alg}
The method reduces to the basic PDBS \eqref{eq:bregpdps:alg:expanded} when $K$ is affine in $y$.
In Hilbert spaces $X$ and $Y$ with $J_X=\inv\tau N_X$ and $J_Y=\inv\sigma N_Y$, we can rearrange \eqref{eq:bregpdps-inertia:alg:expanded} as
\begin{alg}{Modified PDPS}%
    \index{primal-dual!splitting!proximal}%
    \index{primal-dual!splitting!proximal!modified}%
    \index{PDPS!modified}%
    Iterate over $k \in \N$:
    \begin{equation}
        \label{eq:bregpdps-inertia:hilbert}
        \!\!\!\!%
        %\left\{
            \begin{aligned}
                \nextx & \defeq \prox_{\tau F}(\thisx - \tau \grad_x K(\thisx, \thisy)),
                \\
                \nexty & \defeq \prox_{\sigma G_*}(\thisy + \sigma[2\grad_y K(\nextx, \thisy) + \grad_y K(\thisx, \thisy)-2\grad_yK(\thisx,\prevy)]).\!\!\!\!%
            \end{aligned}
        %\right.
    \end{equation}
\end{alg}

\begin{remark}
    \label{rem:bregpdps-inertia:nonlin}
    The modified PDPS  \eqref{eq:bregpdps-inertia:hilbert} is slightly more complicated than the method in \cite{tuomov-nlpdhgm-general}, which would update
    \[
        \nexty \defeq \prox_{\sigma G_*}(\thisy + \sigma \grad_y K(2\nextx-\thisx, \thisy)).
    \]
    %However, our analysis is significantly simpler.
    Likewise, \eqref{eq:bregpdps-inertia:alg:expanded} is different from the algorithm presented in \cite{hamedani2018primal} for convex-concave $K$. It would, for the standard generating functions, update\footnote{Note that \cite{hamedani2018primal} uses the historical ordering of the primal and dual updates from \cite{chambolle2010first}, prior to the proof-simplifying discovery of the proximal point formulation in \cite{he2012convergence}. Hence our $\thisy$ is their $\nexty$.}
    \[
        \nexty \defeq \prox_{\sigma G_*}(\thisy + \sigma[2\grad_y K(\nextx, \thisy)-\grad_y K(\thisx, \prevy)]).
    \]
    We could produce this method by taking $J_{k+1}^-(u) = -K(\thisx, y)$. However, the convergence proofs would require some additional steps.
\end{remark}

The main difference to the overall analysis of \cref{sec:convergence} is in bounding from below the Bregman divergences in \cref{eq:inertiabreg:result}.
We now have
\begin{subequations}
\label{eq:bregpdps-modified:divergence-sums}
\begin{align}
    B_{N+1} + B_{N+2}^- - B_{N+1}''
    &=B^0 - B_{N+1}''
    \quad\text{and}
    \\
    B_{k+1}+B_{k+2}^- - B_{k+1}'' -B_{k+2}'
    &=B^0 - B_{k+1}'' -B_{k+2}'.
\end{align}
\end{subequations}
If $D_y K(\thisx, \freevar)$ is $L_{DK,y}$-Lipschitz,
\begin{equation}
    \label{eq:bregpdps-modified:cauchy}
    \begin{aligned}[t]
    \dualprod{D_1 B_{k+1}^-(\thisu, u)}{\thisu-u'}
    &=2\dualprod{D_y K(\thisx, \thisy)-D_y K(\thisx, y)}{\thisy-y'}
    \\
    &
    \le \sqrt{L_{DK,y}}\norm{y-\thisy}^2 + \sqrt{L_{DK,y}} \norm{y'-\thisy}^2
    \\
    &
    =: B_{k+1}'(\thisu, u) + B_{k+1}''(u',\thisu).
    \end{aligned}
\end{equation}
Therefore, for the modified descent inequality \cref{eq:inertiabreg:result} to be meaningful, we require:

\begin{assumption}
    \label{ass:bregpdps-inertia:b0}
    We assume that $\norm{D_y K(x, y)-D_y K(x, y')} \le L_{DK,y}\norm{y-y'}$ when $(x, y), (x, y') \in \Omega$ for some domain $\Omega \subset X \times Y$. Moreover, for some $\epsilon \ge 0$ we have
    \begin{equation}
        \label{eq:bregpdps-inertia:ellipticity}
        B^0(u, u') \ge \frac{\epsilon}{2}\norm{u-u'}_{X \times Y}^2
        +2\sqrt{L_{DK,y}}\norm{y-y'}_Y^2
        \quad (u, u' \in \Omega).
    \end{equation}
    We say that the present assumption holds \emph{strongly} if $\epsilon>0$.
\end{assumption}

\begin{example}
    \label{ex:bregpdps-inertia:b0:0}
    If $K$ is affine in $y$, $L_{DK,y}=0$. Therefore, \cref{ass:bregpdps-inertia:b0} reduces to the \mbox{(semi-)}ellipticity of $B^0$, which can be verified as in \cref{sec:convergence:ellipticity,sec:convergence:ellipticity:block}.
\end{example}

\begin{example}
    \label{ex:bregpdps-inertia:b0:1}
    Generally, it is easy to see that if one of the results of \cref{sec:convergence:ellipticity} holds with $\tilde\sigma=1/(\inv\sigma-4\sqrt{L_{DK,y}})>0$ in place of $\sigma$, then \eqref{eq:bregpdps-inertia:ellipticity} holds. In particular, if $K$ has $L_{DK}$-Lipschitz derivative within $\Omega$, then \cref{lemma:bregpdps:distkplus1-for-k-lowerbound} gives the condition $1 \ge  L_{DK}\max\{\tau,\sigma/(1-4\sigma \sqrt{L_{DK,y}})\}$ and $1>4\sigma \sqrt{L_{DK,y}}$ for \eqref{eq:bregpdps-inertia:ellipticity} to hold with $\epsilon=0$.
    The assumption holds strongly if the first inequality is strict.
\end{example}

Similarly to \cref{lemma:bregpdps-inertia:mainlemma}, we now have the following replacement for  \cref{lemma:bregpdps:convergence}:

\begin{lemma}
    \label{lemma:bregpdps-modified:convergence}
    Suppose \cref{ass:bregpdps-inertia:b0} holds and \eqref{eq:bregpdps:condition2} holds within $\Omega_\optu$ for some $\optu \in X \times Y$ and $\GenGap(u, \optu)$. Given $u^0 \in X \times Y$, suppose the iterates generated by the modified PDBS \eqref{eq:bregpdps-inertia:alg:expanded} satisfy $\{\thisu\}_{k=0}^N \subset \Omega_\optu$.
    Then
    \begin{equation}
        \label{eq:bregpdps-modified-result}
        \epsilon B^0(\optu, u^N)
        + \epsilon \sum_{k=0}^{N-1} B^0(\nextu,\thisu)
        + \sum_{k=0}^{N-1} \GenGap(\nextu, \optu)
        \le
        [B_1+B_2^-](\optu, u^0).
    \end{equation}
\end{lemma}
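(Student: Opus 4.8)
The plan is to instantiate the abstract inertial estimate of \cref{thm:inertiabreg} for the generating functions \eqref{eq:bregpdps-inertia:jk-final} and \eqref{eq:bregman:jkm-final}, and then to convert the resulting descent inequality \eqref{eq:inertiabreg:result} into \eqref{eq:bregpdps-modified-result} using the precomputed divergence identities \eqref{eq:bregpdps-modified:divergence-sums}, the Cauchy estimate \eqref{eq:bregpdps-modified:cauchy}, and the ellipticity \cref{ass:bregpdps-inertia:b0}.

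First I would verify the two hypotheses of \cref{thm:inertiabreg}. The modified fundamental condition \eqref{eq:inertiabreg:condition} simplifies drastically here: since the $2K$ terms in the generating functions cancel in each sum, $B_{k+1}+B_{k+2}^-=B^0=B_{k+2}+B_{k+3}^-$, which is exactly what \eqref{eq:bregpdps-modified:divergence-sums} records, so the bracket $[(B_{k+2}+B_{k+3}^-)-(B_{k+1}+B_{k+2}^-)](\optu,\nextu)$ vanishes identically. Thus \eqref{eq:inertiabreg:condition} reduces to the plain requirement $\dualprod{\nexxt h}{\nextu-\optu}\ge\GenGap(\nextu,\optu)$; inserting $\nexxt h=(x_{k+1}^*+D_xK(\nextx,\nexty),\,y_{k+1}^*-D_yK(\nextx,\nexty))$ with $x_{k+1}^*\in\subdiff F(\nextx)$ and $y_{k+1}^*\in\subdiff G_*(\nexty)$ and expanding exactly as in the proof of \cref{lemma:bregpdps:convergence}, this is precisely \eqref{eq:bregpdps:condition2}, which holds on $\Omega_\optu\supset\{\thisu\}$ by hypothesis. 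For the general Cauchy inequality \eqref{eq:inertiabreg:cauchy} I would simply cite the computation \eqref{eq:bregpdps-modified:cauchy}, which rewrites $\dualprod{D_1B_{k+1}^-(\thisu,u)}{\thisu-u'}$ as $2\dualprod{D_yK(\thisx,\thisy)-D_yK(\thisx,y)}{\thisy-y'}$ and, via the $L_{DK,y}$-Lipschitz continuity of $D_yK(\thisx,\freevar)$ from \cref{ass:bregpdps-inertia:b0} together with Cauchy's inequality, bounds it by $B_{k+1}'(\thisu,u)+B_{k+1}''(u',\thisu)$ with $B_{k+1}'(\thisu,u)=\sqrt{L_{DK,y}}\norm{y-\thisy}_Y^2$ and $B_{k+1}''(u',\thisu)=\sqrt{L_{DK,y}}\norm{y'-\thisy}_Y^2$.

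With both hypotheses in hand, \cref{thm:inertiabreg} yields \eqref{eq:inertiabreg:result}; substituting \eqref{eq:bregpdps-modified:divergence-sums} replaces the terminal coefficient by $B^0-B_{N+1}''$ and each summand coefficient by $B^0-B_{k+1}''-B_{k+2}'$, while the right-hand side stays $[B_1+B_2^-](\optu,u^0)$. The crux is the lower bound on these residual divergences. In the step term, $B_{k+1}''(\nextu,\thisu)$ and $B_{k+2}'(\nextu,\thisu)$ each equal $\sqrt{L_{DK,y}}\norm{\nexty-\thisy}_Y^2$, so together they subtract exactly $2\sqrt{L_{DK,y}}\norm{\nexty-\thisy}_Y^2$ from $B^0(\nextu,\thisu)$; in the terminal slot only the single factor $\sqrt{L_{DK,y}}\norm{\opty-y^N}_Y^2$ is subtracted. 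The ellipticity \eqref{eq:bregpdps-inertia:ellipticity} is stated with precisely the buffer $2\sqrt{L_{DK,y}}\norm{y-y'}_Y^2$ so that these subtracted quantities are fully absorbed, leaving a nonnegative coercive remainder that furnishes the $\epsilon B^0(\optu,u^N)$ and $\epsilon\sum_{k}B^0(\nextu,\thisu)$ contributions on the left of \eqref{eq:bregpdps-modified-result}. Assembling this bounded-below left-hand side with the gap sum and the carried-over right-hand side gives the claim.

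The main obstacle --- and the only place where anything beyond bookkeeping happens --- is this final absorption. Unlike \cref{lemma:bregpdps-inertia:mainlemma}, where every auxiliary divergence is a scalar multiple of $B^0$ and the estimate is an immediate use of $B^0\ge0$, here $B_{k+1}'$ and $B_{k+1}''$ are genuine squared-norm functionals on the dual variable that are not proportional to $B^0$. The argument therefore rests entirely on the constants matching: the factor $2\sqrt{L_{DK,y}}$ in \cref{ass:bregpdps-inertia:b0} must coincide with the total $\sqrt{L_{DK,y}}+\sqrt{L_{DK,y}}$ produced by \eqref{eq:bregpdps-modified:cauchy} for the two inertial Cauchy terms in each step summand, and must cover the single such term in the terminal slot. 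I would take care to track that the dual-block norm differences appearing in \eqref{eq:bregpdps-modified:cauchy} and \eqref{eq:bregpdps-inertia:ellipticity} are literally the same $\norm{y-y'}_Y^2$, so that no leftover positive term survives to spoil the descent.
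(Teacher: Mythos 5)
Your proposal follows the paper's own proof step for step: you reduce the modified fundamental condition \eqref{eq:inertiabreg:condition} to \eqref{eq:abstractbreg:condition} via the cancellation $B_{k+1}+B_{k+2}^-=B^0$ recorded in \eqref{eq:bregpdps-modified:divergence-sums}, verify the general Cauchy inequality \eqref{eq:inertiabreg:cauchy} through \eqref{eq:bregpdps-modified:cauchy}, invoke \cref{thm:inertiabreg}, and absorb the residual $\sqrt{L_{DK,y}}$-terms using the buffer built into \eqref{eq:bregpdps-inertia:ellipticity}. This is correct and essentially identical to the paper's argument, with your final paragraph merely making explicit the constant-matching that the paper leaves implicit.
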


\begin{proof}
    Inserting \eqref{eq:bregpdps-inertia:jk-final} and \eqref{eq:bregman:jkm-final}, \eqref{eq:inertiabreg:condition} reduces to  \eqref{eq:abstractbreg:condition}, which follows from \eqref{eq:bregpdps:condition2} as in  \cref{lemma:bregpdps:convergence}.
    We verify \eqref{eq:inertiabreg:cauchy} via \eqref{eq:bregpdps-modified:cauchy} and \cref{ass:bregpdps-inertia:b0}.
    Thus \cref{thm:inertiabreg} proves \cref{eq:inertiabreg:result}.
    Inserting \eqref{eq:bregpdps-modified:divergence-sums} and \eqref{eq:bregpdps-inertia:ellipticity} with $B_{k+1}'$ and $B_{k+1}''$ from \eqref{eq:bregpdps-modified:cauchy} into \cref{eq:inertiabreg:result} proves \eqref{eq:bregpdps-modified-result}.
\end{proof}

We may now proceed as in \cref{sec:convergence:gaps,sec:convergence:iterate} to prove convergence.
For the verification of \cref{ass:bregpdps-inertia:b0} we can use \cref{ex:bregpdps-inertia:b0:0,ex:bregpdps-inertia:b0:1}.

\begin{theorem}[Convergence, modified method]
    \index{convergence!gap}
    \index{convergence!weak}
    \index{convergence!strong}
    \label{thm:bregpdps-modified:convergence}
    \Cref{thm:convergence:gap,thm:convergence:weak,thm:convergence:strong} apply to the iterates $\{\nextu\}_{k \in \N}$ generated by the modified PDBS \eqref{eq:bregpdps-inertia:alg:expanded} if we replace the assumptions of semi-ellipticity (resp.~ellipticity) of $B^0$ with \cref{ass:bregpdps-inertia:b0} holding (strongly).
\end{theorem}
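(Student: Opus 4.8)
The plan is to reproduce, essentially verbatim, the one-line argument behind \Cref{thm:bregpdps-inertia:convergence}. In the proofs of \Cref{thm:convergence:gap,thm:convergence:weak,thm:convergence:strong} the iteration enters only through two facts: the descent inequality \eqref{eq:abstractbreg:result} supplied by \Cref{lemma:bregpdps:convergence}, and the \mbox{(semi-)}ellipticity of $B^0$ used to pass from $B^0$-bounds to norm-bounds. I would therefore replace \Cref{lemma:bregpdps:convergence} and \eqref{eq:abstractbreg:result} by \Cref{lemma:bregpdps-modified:convergence} and its modified descent inequality \eqref{eq:bregpdps-modified-result}, whose hypotheses are exactly \Cref{ass:bregpdps-inertia:b0} together with \eqref{eq:bregpdps:condition2} --- and the latter is already among the standing assumptions of each of the three theorems.

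This replacement is almost cost-free because the right-hand side of \eqref{eq:bregpdps-modified-result} collapses to $B^0(\optu, u^0)$: Bregman divergences add over generating functions, and by \eqref{eq:bregpdps-inertia:jk-final} and \eqref{eq:bregman:jkm-final} the terms $2K(x^1,\freevar)$ cancel, so $J_1+J_2^-=J^0$ and hence $[B_1+B_2^-](\optu,u^0)=B^0(\optu,u^0)$. Thus \eqref{eq:bregpdps-modified-result} has the same shape as \eqref{eq:abstractbreg:result}, except for factors $\epsilon$ on its two leading terms. For the gap and strong results, where only semi-ellipticity is used, I would take \Cref{ass:bregpdps-inertia:b0} to hold (possibly with $\epsilon=0$), note that \eqref{eq:bregpdps-inertia:ellipticity} forces $B^0 \ge 0$, discard the non-negative $\epsilon B^0$ terms, and recover $\sum_{k=0}^{N-1}\GenGap(\nextu,\optu) \le B^0(\optu,u^0)=:M(\optu)$ exactly as in the unmodified proofs; since $J^0\in C^1$ we have $M\in C^1$, so \Cref{lemma:bregpdps:unaccelerated-gap} and the argument of \Cref{thm:convergence:gap} apply unchanged. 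For weak convergence I would instead require \Cref{ass:bregpdps-inertia:b0} to hold \emph{strongly} ($\epsilon>0$) and combine \eqref{eq:bregpdps-modified-result} with the $\epsilon$-ellipticity $B^0(u,u')\ge \tfrac{\epsilon}{2}\norm{u-u'}_{X\times Y}^2$ implied by \eqref{eq:bregpdps-inertia:ellipticity}, obtaining boundedness of $\{u^k\}$ and $\norm{\nextu-\thisu}\to 0$.

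The one genuinely new point --- and the main, if routine, obstacle --- is the step of the weak-convergence proof concluding $0 \in \limsup_{k\to\infty} H(\nextu)$. For the basic method this uses $D_1 B^0(\nextu,\thisu)\to 0$, whereas the modified iteration \eqref{eq:inertiabreg:pp} carries the preconditioner $D_1 B_{k+1}(\nextu,\thisu)+D_1 B_{k+1}^-(\thisu,\prevu)$, which by the computation preceding \eqref{eq:bregpdps-inertia:alg:expanded} equals $[DJ^0(\nextu)-DJ^0(\thisu)]+(0,\tilde y_{k+1}^*)$. I would verify this tends to zero strongly: the first bracket vanishes by $C^1$-continuity of $DJ^0$ and $\norm{\nextu-\thisu}\to 0$, while each $D_yK$-difference constituting $\tilde y_{k+1}^*$ is controlled through the $L_{DK,y}$-Lipschitz clause of \Cref{ass:bregpdps-inertia:b0} applied to $\norm{\nexty-\thisy}\to 0$ and to $\norm{\thisy-\prevy}\to 0$ (the latter coming from summability of $B^0(\nextu,\thisu)$). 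This is precisely the role of the Lipschitz hypothesis, and with it the outer-semicontinuity argument of \Cref{thm:convergence:weak} carries over. Finally I would record, as for \Cref{thm:bregpdps-inertia:convergence}, that \Cref{ass:bregpdps-inertia:b0} holding (strongly) already yields the semi-ellipticity (ellipticity) of $B^0$ via \eqref{eq:bregpdps-inertia:ellipticity}, so no additional ellipticity hypothesis is required.
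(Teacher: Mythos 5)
Your proposal is correct and follows essentially the same route as the paper, whose proof is exactly the one-line substitution of \Cref{lemma:bregpdps:convergence} and \eqref{eq:abstractbreg:result} by \Cref{lemma:bregpdps-modified:convergence} and \eqref{eq:bregpdps-modified-result}, plus the observation that \Cref{ass:bregpdps-inertia:b0} (strongly) implies the \mbox{(semi-)}ellipticity of $B^0$. Your additional details --- the cancellation $[B_1+B_2^-](\optu,u^0)=B^0(\optu,u^0)$ and the verification that the extra preconditioner term $(0,\tilde y_{k+1}^*)$ vanishes via the $L_{DK,y}$-Lipschitz clause so that the outer-semicontinuity step of \Cref{thm:convergence:weak} still goes through --- are correct elaborations of points the paper leaves implicit.
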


\begin{proof}
    We replace \cref{lemma:bregpdps:convergence} and \cref{eq:abstractbreg:result} by \cref{lemma:bregpdps-modified:convergence} and  \eqref{eq:bregpdps-modified-result} in \cref{thm:convergence:gap,thm:convergence:weak,thm:convergence:strong}.
    Observe that (strong) \cref{ass:bregpdps-inertia:b0} implies the \mbox{(semi-)}ellipticity of $B^0$.
\end{proof}

Now we have a locally convergent method \eqref{eq:bregpdps-inertia:hilbert} with easily implementable steps to tackle problems such as Potts segmentation \eqref{eq:intro:potts} \cite{tuomov-nlpdhgm-general}.

%%%%%%%%%%%%%%%%%%%%%%%%%%%%%%%%%%%%%%%%%%%%%%%%%%%%%%%%%%%%%%%%%%%%%%%%%%%%%%%%%%%%%%%%%
\section{Further directions}
\label{sec:further}
%%%%%%%%%%%%%%%%%%%%%%%%%%%%%%%%%%%%%%%%%%%%%%%%%%%%%%%%%%%%%%%%%%%%%%%%%%%%%%%%%%%%%%%%%

We close by briefly reviewing some things not covered, other possible extensions, and alternative algorithms.

%%%%%%%%%%%%%%%%%%%%%%%%%%%%%%%%%%%%%%%%%%%%%%%
\subsection{Acceleration}
\label{sec:accel-brief}
%%%%%%%%%%%%%%%%%%%%%%%%%%%%%%%%%%%%%%%%%%%%%%%

To avoid technical detail, we did not cover $O(1/N^2)$ acceleration.
The fundamental ingredients of proof are, however, exactly the same as we have used: sufficient second-order growth and ellipticity of the Bregman divergences $B^0_k$, which are now iteration-dependent. Additionally, a portion of the second-order growth must be used to make the metrics $B^0_k$ grow as $k \to \infty$.
For bilinear $K$ in Hilbert spaces, such an argument can be found in \cite{tuomov-proxtest};
for $K(x,y)=\dualprod{A(x)}{y}$ in \cite{tuomov-nlpdhgm-redo}; and for general $K$ in \cite{tuomov-nlpdhgm-general}. As mentioned in \cref{rem:bregpdps:nonlin,rem:bregpdps-inertia:nonlin}, the algorithms in the latter two differ slightly from the ones presented here.

%%%%%%%%%%%%%%%%%%%%%%%%%%%%%%%%%%%%%%%%%%%%%%%
\subsection{Stochastic methods}
%%%%%%%%%%%%%%%%%%%%%%%%%%%%%%%%%%%%%%%%%%%%%%%

It is possible to refine the block-adapted \eqref{eq:bregpdps:alg:block} and its accelerated version into \indexalso{stochastic methods}. The idea is to take on each step subsets of primal-blocks $S(i) \subset \{1,\ldots,m\}$ and dual blocks $V(i+1) \subset \{1,\ldots,n\}$ and to only update the corresponding $\nextx_j$ and $\nexty_\ell$. Full discussion of such technical algorithms are outside the scope of our present overview.
We refer to \cite{tuomov-blockcp} for an approach covering block-adapted acceleration and both primal- and dual randomisation in the case of bilinear $K$, but see also \cite{chambolle2017stochastic} for a more basic version. For more general $K$ affine in $y$, see  \cite{tuomov-nlpdhgm-block}.

%%%%%%%%%%%%%%%%%%%%%%%%%%%%%%%%%%%%%%%%%%%%%%%
\subsection{Alternative Bregman divergences}
%%%%%%%%%%%%%%%%%%%%%%%%%%%%%%%%%%%%%%%%%%%%%%%

We have used Bregman divergences as a proof tool, in the end opting for the standard quadratic generating functions on Hilbert spaces. Nevertheless, our theory works for arbitrary Bregman divergences. The practical question is whether $F$ and $G_*$ remain prox-simple with respect to such a divergence. This can be the case for the “entropic distance” generated on $L^1(\Omega; [0, \infty))$ by
\[
    J(x) \defeq
    \begin{cases}
        \int_{\Omega} x(t) \ln x(t) \d t, & x \ge0 \text{ a.e. on } \Omega, \\
        \infty, & \text{otherwise}
    \end{cases}
\]
See, for example, \cite{burger2019entropic} for a Landweber method (gradient descent on regularised least squares) based on such a distance.

%%%%%%%%%%%%%%%%%%%%%%%%%%%%%%%%%%%%%%%%%%%%%%%
\subsection{Alternative approaches}
%%%%%%%%%%%%%%%%%%%%%%%%%%%%%%%%%%%%%%%%%%%%%%%

The derivative $D_1 B^0$ in \eqref{eq:bregpdps:alg} can be seen as a preconditioner, replacing $\tau(u-u')$ in the proximal point method \eqref{eq:bregpdps:basicprox}.
Our choice of $B^0$ is not the only option.

Consider the problem
\begin{equation}
    \label{eq:further:fbprob}
    \min_{x \in X} F(x) + E(x).
\end{equation}
Provided $E$ is differentiable and $F$ \term{prox-simple}, i.e., the proximal map of $F$ has a a closed-form expression, \eqref{eq:intro:fgprob} can be solved by forward-backward splitting methods as first introduced in \cite{lionsmercier1979splitting}. In a Hilbert space $X$, this can be written
\begin{equation}
    \label{eq:further:fb}
    \nextx \defeq \prox_{\tau F}(\thisx - \tau \grad E(\thisx)).
\end{equation}
Variants based on Bregman divergences were introduced in \cite{nemirovski1983problem} under the name “\term{mirror prox}” or “\term{mirror descent}”; see also the review \cite{chambollepock2016introduction}. The method and convergence proofs for it can be derived from our primal-dual approach. Indeed, if we take $G_* \equiv \delta_{\{0\}}$ as the \hyperref[sec:glossary]{indicator function} of zero, and $K(x,y)=E(x)$ for some $E \in C^1(X)$, then \eqref{eq:intro:genprob} is equivalent to \eqref{eq:further:fbprob}.
Now the dual step step of \eqref{eq:bregpdps:alg:hilbert} is $\nexty \defeq 0$, and the primal step is \eqref{eq:further:fb}.

Forward-backward splitting is especially popular under the name \term{iterative soft-thresholding} (ISTA) in the context of \term{sparse reconstruction} (i.e., regularisation of linear inverse problems with $\ell^1$ penalties), see, e.g., \cite{chambolledevore1998nonlinear,daubechies2004surrogate,beck2009fista}.
However, forward-backward splitting has limited applicability in imaging and inverse problems due to the joint prox-simplicity and smoothness requirements.
\index{problem!dual} Sometimes these can be circumvented by considering so-called dual problems \cite{beck2009fast}.

%%%%%%%%%%%%%%%%%%%%%%%%%%%%%%%%%%%%%%%%%%%%%%%
%\paragraph{primal-dual explicit splitting}
%%%%%%%%%%%%%%%%%%%%%%%%%%%%%%%%%%%%%%%%%%%%%%%

Let then $E$ be Gâteaux-differentiable and $F=G \circ A$ for a nonsmooth function $F$ and a linear operator $A$ in \eqref{eq:further:fbprob}, i.e., consider the problem
\[
    \min_{x \in X} E(x) + G(Ax),
\]

Forward--backward splitting is impractical as $G \circ A$ is in general not \indexalso{prox-simple}. Assuming $G$ to have the preconjugate $G_*$, we can write this problem as an instance of \eqref{eq:intro:genprob} with $F=0$ and $K(x,y)=E(x)+\dualprod{Ax}{y}$. Therefore the methods we have presented are applicable. However, in this instance, also $J^0(u) \defeq \frac{1}{2}\norm{u}_{X \times Y}^2 + \frac{1}{2}\norm{A^*y}^2_{X^*}$ would produce an algorithm with realisable steps. In analogy to the PDPS, it might be called the \term[primal-dual!splitting!explicit]{primal dual explicit spitting} (\indexalso{PDES}). The method was introduced in  \cite{loris2011generalization} for $E(z)=\frac{1}{2}\norm{b-z}^2$ as the \index{iterative soft thresholding!generalised} “generalised iterative soft-thresholding” (\indexalso{GIST}), but has also been called the \term[primal-dual!fixed point method]{primal-dual fixed point method} (\indexalso{PDFP}, \cite{chen2013pdfp}) and the \term{proximal alternating predictor corrector} (\indexalso{PAPC}, \cite{drori2015simple}).

%%%%%%%%%%%%%%%%%%%%%%%%%%%%%%%%%%%%%%%%%%%%%%%
%\paragraph{Augmented Lagrangians}
%%%%%%%%%%%%%%%%%%%%%%%%%%%%%%%%%%%%%%%%%%%%%%%

The classical \term{Augmented Lagrangian} method solves the   saddle point problem
\begin{equation}
    \label{eq:further:augl-problem}
    \min_{x}\max_y~ F(x) + \frac{\tau}{2}\norm{E(x)}^2 + \dualprod{E(x)}{y},
\end{equation}
alternatingly for $x$ and $y$. The \term{alternating directions method of multipliers} (\indexalso{ADMM}) of \cite{gabay,arrow1958strudies}  takes $E(x)=Ax_1+Bx_2-c$ and $F(x)=F_1(x_1)+F_2(x_2)$ for $x=(x_1, x_2)$, and alternates between solving \eqref{eq:further:augl-problem} for $x_1$, $x_2$, and $y$, using the most recent iterate for the other variables. The method cannot be expressed in our Bregman divergence framework, as the preconditioner $D_1 B_{k+1}(\freevar, \thisx)$ would need to be non-symmetric. The steps of the method are potentially expensive, each itself being an optimisation problem. Hence the \termnoindex{preconditioned ADMM} of \cite{zhang2011unified}, which is equivalent to the PDPS and the classical \term{Douglas--Rachford splitting} (\indexalso{DRS}, \cite{douglas1956numerical}) applied to appropriate problems \cite{chambolle2010first,clasonvalkonen2020nonsmooth}. The preconditioned ADMM was extended to nonlinear $E$ in \cite{benning2015preconditioned}.

%%%%%%%%%%%%%%%%%%%%%%%%%%%%%%%%%%%%%%%%%%%%%%%
%\paragraph{Over-relaxation and inertia}
%%%%%%%%%%%%%%%%%%%%%%%%%%%%%%%%%%%%%%%%%%%%%%%

Based on derivations avoiding the Lipschitz gradient assumption (\indexalso{cocoercivity}) in forward-backward splitting, \cite{malitsky2018forward} moves the over-relaxation step $\nexxt{\bar x} \defeq 2\nextx-\thisx$ of the PDPS outside the proximal operators. This amounts to taking $J_{k+1}^-=\lambda_k K$ in \cref{sec:inertiabreg:inertia} instead of $J_{k+1}^-(x,y)=\lambda_k J^0=\lambda_k[\inv\tau J_X(x)+\inv\sigma J_Y(y)-K(x,y)]$, so is “\index{inertia!partial}partial inertia”; compare the “\index{inertia!corrected}corrected inertia” of \cite{tuomov-inertia}.

An \term[over-relaxation]{over-relaxed} variant of the same idea maybe found in \cite{bredies2015accelerated}. We have not discussed over-relaxation of entire algorithms. To briefly relate it to the basic inertia of \cref{eq:bregpdps-inertia:hilbert-linear}, the latter ``rebases'' the algorithm at the inertial iterate $\this{\tilde u}$ constructed from $\thisu$ and $\prevu$, whereas over-relaxation would construct $\this{\tilde u}$ from $\thisu$ and $\prev{\tilde u}$.
The derivation in \cite{bredies2015accelerated} is based on applying Douglas--Rachford splitting on a lifted problem.
The basic over-relaxation of the PDPS is known as the \indexalso{Condat--V{\~u} method} \cite{condat2013primaldual,vu2013splitting}.

%%%%%%%%%%%%%%%%%%%%%%%%%%%%%%%%%%%%%%%%%%%%%%%
\subsection{Functions on manifolds and Hadamard spaces}
%%%%%%%%%%%%%%%%%%%%%%%%%%%%%%%%%%%%%%%%%%%%%%%

The PDPS has been extended in \cite{bergmann2019fenchel} to functions on \index{manifold!Riemannian}Riemannian manifolds; the problem $\min_{x \in \mathcal{M}} F(x)+G(Ex)$, where $E: \mathcal{M} \to \mathcal{N}$ with $\mathcal{M}$ and $\mathcal{N}$ Riemannian manifolds. In general, between manifolds, there are no linear maps, so $E$ is nonlinear. Indeed, besides introducing a theory of conjugacy for functions on manifolds, the algorithm presented in \cite{bergmann2019fenchel} is based on the NL-PDPS of \cite{tuomov-nlpdhgm,tuomov-nlpdhgm-redo}.

Convergence could only be proved on \term[manifold!Hadamard]{Hadamard manifolds}, which are  special: a type of three-point inequality holds \cite[Lemma 12.3.1]{docarmo2013riemannian}.
Indeed, in even more general \term[Hadamard space]{Hadamard spaces} with the metric $d$, for any three points $\nextx,\thisx,\optx$, we have \cite[Corollary 1.2.5]{bacak2014convex}
\begin{equation}
    \label{eq:further:hadamard-three-point}
    \frac{1}{2}d(\thisx, \nextx)^2 + \frac{1}{2}d(\nextx, \optx)^2 - \frac{1}{2}d(\thisx, \optx)^2 \le d(\thisx, \nextx)d(\optx, \nextx).
\end{equation}
Therefore, given a function $f$ on such a space, to derive a simple proximal point algorithm, having constructed the iterate $\thisx$ we might try to find $\nextx$ such that
\[
    f(\nextx) + d(\thisx, \nextx) \le f(\thisx).
\]
Multiplying this inequality by $d(\optx,\nextx)$ and using the three-point inequality \eqref{eq:further:hadamard-three-point},
\[
    \frac{1}{2}d(\thisx, \nextx)^2 + \frac{1}{2}d(\nextx, \optx)^2
    + [f(\nextx)-f(\thisx)]d(\optx,\nextx)
    \le \frac{1}{2}d(\thisx, \optx)^2.
\]
If the space is bounded, $d(\optx, \nextx) \le C$, so since $f(\thisx) \ge f(\nextx)$, we may telescope and proceed as before to obtain convergence.

The Hadamard assumption is restrictive: if a Banach space is Hadamard, it is Hilbert, while a Riemannian manifold is Hadamard if it is simply connected with a non-positive sectional curvature \cite[section 1.2]{bacak2014convex}.

%%%%%%%%%%%%%%%%%%%%%%%%%%%%%%%%%%%%%%%%%%%%%%%%%%%%%%%%%%%%%%%%%%%%%%%%%%%%%%%%%%%%%%%%%
\begin{acknowledgement}
    Academy of Finland grants 314701 and 320022.
\end{acknowledgement}
%%%%%%%%%%%%%%%%%%%%%%%%%%%%%%%%%%%%%%%%%%%%%%%%%%%%%%%%%%%%%%%%%%%%%%%%%%%%%%%%%%%%%%%%%

%%%%%%%%%%%%%%%%%%%%%%%%%%%%%%%%%%%%%%%%%%%%%%%%%%%%%%%%%%%%%%%%%%%%%%%%%%%%%%%%%%%%%%%%%
\section*{Glossary}
\label{sec:glossary}
\addcontentsline{toc}{section}{Glossary and notation}
%\appendix
\begin{list}{}{
    \setlength{\leftmargin}{11.5em}
    \setlength{\labelwidth}{\leftmargin-.5em}
    \setlength{\labelsep}{0.5em}
    \setlength{\itemsep}{1pt}
    \setlength\parsep{1pt}
    \setlength\topsep{1pt}
    \def\makelabel#1{#1\hfill}
}
    \item[The extended reals]
        We define $\extR \defeq [-\infty, \infty]$.
    \item[A convex function]
        A function $F: X \to \extR$ is convex if for all $x, x' \in X$ and $\lambda \in (0, 1)$, we have
        \[
            F(\lambda x + (1-\lambda)x') \le F(\lambda x)+ F((1-\lambda)x').
        \]
    \item[A concave function]
        A function $F: X \to \extR$ is concave if $-f$ is convex.
    \item[A convex-concave function]
        A function $K: X \times Y \to \extR$ is convex-concave if $K(\freevar, y)$ is convex for all $y \in Y$, and $K(x, \freevar)$ is concave for all $x \in X$.
    \item[The dual space]
        We write $X^*$ for the dual space of a topological vector (Banach, Hilbert) space $X$.
    \item[Set-valued map]
        We write $A: X \setto Y$ if $A$ is a set-valued map between the spaces $X$ and $Y$.
    \item[Derivative]
        We write $DF: X \to X^*$ for the derivative of a Gâteaux-differentiable function $F: X \to \R$.
    \item[Convex subdifferential]
        This is the map $\subdiff F: X \setto X^*$ for a convex $F: X \to \extR$. By definition $x^* \in \subdiff F(x)$ at $x \in X$ if and only if
        \[
            F(x')-F(x) \ge \dualprod{x^*}{x'-x}
            \quad(x' \in X).
        \]
    \item[Fenchel conjugate]
        This is the function $f^*: X^* \to \extR$ defined for $F: X \to \extR$ by
        \[
            f^*(x^*) \defeq \sup_{x \in X}  \dualprod{x^*}{x} - F(x)
            \quad(x^* \in X^*).
        \]
    \item[Fenchel preconjugate]
        If $X=(X_*)^*$ is the dual space of some space $X_*$, and $F: X \to \extR$, then $f_*: X_* \to \extR$ is the preconjugate of $f$ if $f=(f_*)^*$.
    \item[Proximal map]
        For a function $F: X \to \extR$, this can be defined as
        \[
            \prox_{F}(x) \defeq \argmin_{\alt x \in X}\left( F(\alt x)+\frac{1}{2}\norm{\alt x-x}_X^2\right).
        \]
    %\item[Prox-simple] The function $F$ is prox-simple if $\prox_F$ can has a closed-form analytical expression that does not require numerical solution of an optimisation problem.
        
    \item[Distributional derivative]
    It arises from integration by parts: If $u: \R^n \supset \Omega \to \R$ is differentiable and $\phi \in C_c^\infty(\Omega; \R^n)$, then
    \[
        \int_\Omega \iprod{\grad u}{\phi} \d x = - \int_\Omega u \divergence \phi \d x.
    \]
    If now $u$ is not differentiable, we \emph{define} the distribution $D \in C_c^\infty(\Omega; \R^n)^*$ by
    \[
        Du(\phi) \defeq - \int_\Omega u \divergence \phi \d x.
    \]
    If $Du$ is bounded (as a linear operator) it can be presented as a vector Radon measure \cite{federer1969gmt}, the space denoted $\Meas(\Omega; \R^n)$.

    \item[Indicator function] For a set $A$, we define
    \[
        \delta_A(x) \defeq \begin{cases} 0, & x \in A, \\ \infty, & x \not\in A. \end{cases}.
    \]
\end{list}
%%%%%%%%%%%%%%%%%%%%%%%%%%%%%%%%%%%%%%%%%%%%%%%%%%%%%%%%%%%%%%%%%%%%%%%%%%%%%%%%%%%%%%%%%

%%%%%%%%%%%%%%%%%%%%%%%%%%%%%%%%%%%%%%%%%%%%%%%%%%%%%%%%%%%%%%%%%%%%%%%%%%%%%%%%%%%%%%%%%
%\bibliographystyle{spmpsci} % Shitty springer class has no arXiv support
\bibliographystyle{jnsao}
%\bibliography{bib/abbrevs,bib/bib,bib/bib-own}

\begin{thebibliography}{10}

\bibitem{ambrosio2000fbv}
L{.\nobreak\kern 0.33333em}Ambrosio, N{.\nobreak\kern 0.33333em}Fusco, and
  D{.\nobreak\kern 0.33333em}Pallara, \emph{Functions of Bounded Variation and
  Free Discontinuity Problems}, Oxford University Press, 2000.

\bibitem{arridge2011optical}
S.\,R{.\nobreak\kern 0.33333em}Arridge, J.\,P{.\nobreak\kern 0.33333em}Kaipio,
  V{.\nobreak\kern 0.33333em}Kolehmainen, and T{.\nobreak\kern
  0.33333em}Tarvainen, Optical Imaging, in \emph{Handbook of Mathematical
  Methods in Imaging}, O{.\nobreak\kern 0.33333em}Scherzer (ed.), Springer, New
  York, NY, 2011,  735--780,
  \href{https://dx.doi.org/10.1007/978-0-387-92920-0_17}{\nolinkurl{doi:10.1007/978-0-387-92920-0_17}}.

\bibitem{arrow1958strudies}
K.\,J{.\nobreak\kern 0.33333em}Arrow, L{.\nobreak\kern 0.33333em}Hurwicz, and
  H{.\nobreak\kern 0.33333em}Uzawa, \emph{Studies in Linear and Non-Linear
  Programming}, Stanford University Press, 1958.

\bibitem{bacak2014convex}
M{.\nobreak\kern 0.33333em}Ba{\v c}{\'a}k, \emph{Convex Analysis and
  Optimization in Hadamard Spaces}, Nonlinear Analysis and Applications, De
  Gruyter, 2014.

\bibitem{bauschke2017convex}
H.\,H{.\nobreak\kern 0.33333em}Bauschke and P.\,L{.\nobreak\kern
  0.33333em}Combettes, \emph{Convex Analysis and Monotone Operator Theory in
  {H}ilbert Spaces}, CMS Books in Mathematics, Springer, 2 edition, 2017,
  \href{https://dx.doi.org/10.1007/978-3-319-48311-5}{\nolinkurl{doi:10.1007/978-3-319-48311-5}}.

\bibitem{beck2017firstorder}
A{.\nobreak\kern 0.33333em}Beck, \emph{First-Order Methods in Optimization},
  SIAM, 2017,
  \href{https://dx.doi.org/10.1137/1.9781611974997}{\nolinkurl{doi:10.1137/1.9781611974997}}.

\bibitem{beck2009fast}
A{.\nobreak\kern 0.33333em}Beck and M{.\nobreak\kern 0.33333em}Teboulle, Fast
  gradient-based algorithms for constrained total variation image denoising and
  deblurring problems, \emph{IEEE Transactions on Image Processing} 18 (2009),
  2419--2434,
  \href{https://dx.doi.org/10.1109/TIP.2009.2028250}{\nolinkurl{doi:10.1109/tip.2009.2028250}}.

\bibitem{beck2009fista}
A{.\nobreak\kern 0.33333em}Beck and M{.\nobreak\kern 0.33333em}Teboulle, A Fast
  Iterative Shrinkage-Thresholding Algorithm for Linear Inverse Problems,
  \emph{SIAM Journal on Imaging Sciences} 2 (2009),  183--202,
  \href{https://dx.doi.org/10.1137/080716542}{\nolinkurl{doi:10.1137/080716542}}.

\bibitem{bergmann2019fenchel}
R{.\nobreak\kern 0.33333em}Begmann, R{.\nobreak\kern 0.33333em}Herzog,
  D{.\nobreak\kern 0.33333em}Tenbrick, and J{.\nobreak\kern
  0.33333em}Vidal-Núñez, Fenchel duality for convex optimization and a primal
  dual algorithm on {R}iemannian manifolds, 2019,
  \href{https://arxiv.org/abs/1908.02022}{\nolinkurl{arXiv:1908.02022}}.

\bibitem{benning2015preconditioned}
M{.\nobreak\kern 0.33333em}Benning, F{.\nobreak\kern 0.33333em}Knoll,
  C.\,B{.\nobreak\kern 0.33333em}Sch{\"o}nlieb, and T{.\nobreak\kern
  0.33333em}Valkonen, Preconditioned {ADMM} with nonlinear operator constraint,
  in \emph{System Modeling and Optimization: 27th IFIP TC 7 Conference, CSMO
  2015, Sophia Antipolis, France, June 29--July 3, 2015, Revised Selected
  Papers}, Springer, 2016,  117--126,
  \href{https://dx.doi.org/10.1007/978-3-319-55795-3_10}{\nolinkurl{doi:10.1007/978-3-319-55795-3_10}},
  \href{https://arxiv.org/abs/1511.00425}{\nolinkurl{arXiv:1511.00425}}.

\bibitem{bredies2015accelerated}
K{.\nobreak\kern 0.33333em}Bredies and H{.\nobreak\kern 0.33333em}Sun,
  Preconditioned {D}ouglas--{R}achford splitting methods for convex-concave
  saddle-point problems, \emph{SIAM Journal on Numerical Analysis} 53 (2015),
  421--444,
  \href{https://dx.doi.org/10.1137/140965028}{\nolinkurl{doi:10.1137/140965028}}.

\bibitem{bresiz1970perturbations}
H{.\nobreak\kern 0.33333em}Brezis, M.\,G{.\nobreak\kern 0.33333em}Crandall, and
  A{.\nobreak\kern 0.33333em}Pazy, Perturbations of nonlinear maximal monotone
  sets in {B}anach space, \emph{Communications on Pure and Applied Mathematics}
  23 (1970),  123--144,
  \href{https://dx.doi.org/10.1002/cpa.3160230107}{\nolinkurl{doi:10.1002/cpa.3160230107}}.

\bibitem{browder1967convergence}
F.\,E{.\nobreak\kern 0.33333em}Browder, Convergence theorems for sequences of
  nonlinear operators in Banach spaces, \emph{Mathematische Zeitschrift} 100
  (1967),  201--225,
  \href{https://dx.doi.org/10.1007/BF01109805}{\nolinkurl{doi:10.1007/bf01109805}}.

\bibitem{burger2019entropic}
M{.\nobreak\kern 0.33333em}Burger, E{.\nobreak\kern 0.33333em}Resmerita, and
  M{.\nobreak\kern 0.33333em}Benning, An entropic Landweber method for linear
  ill-posed problems, 2019,
  \href{https://arxiv.org/abs/1906.10032}{\nolinkurl{arXiv:1906.10032}}.

\bibitem{chambolledevore1998nonlinear}
A{.\nobreak\kern 0.33333em}Chambolle, R.\,A{.\nobreak\kern 0.33333em}DeVore,
  N.\,y{.\nobreak\kern 0.33333em}Lee, and B.\,J{.\nobreak\kern
  0.33333em}Lucier, Nonlinear wavelet image processing: variational problems,
  compression, and noise removal through wavelet shrinkage, \emph{IEEE
  Transactions on Image Processing} 7 (1998),  319--335,
  \href{https://dx.doi.org/10.1109/83.661182}{\nolinkurl{doi:10.1109/83.661182}}.

\bibitem{chambolle2017stochastic}
A{.\nobreak\kern 0.33333em}Chambolle, M{.\nobreak\kern 0.33333em}Ehrhardt,
  P{.\nobreak\kern 0.33333em}Richtárik, and C{.\nobreak\kern
  0.33333em}Schönlieb, Stochastic primal-dual hybrid gradient algorithm with
  arbitrary sampling and imaging applications, \emph{SIAM Journal on
  Optimization} 28 (2018),  2783--2808,
  \href{https://dx.doi.org/10.1137/17M1134834}{\nolinkurl{doi:10.1137/17m1134834}}.

\bibitem{chambolle2010first}
A{.\nobreak\kern 0.33333em}Chambolle and T{.\nobreak\kern 0.33333em}Pock, A
  first-order primal-dual algorithm for convex problems with applications to
  imaging, \emph{Journal of Mathematical Imaging and Vision} 40 (2011),
  120--145,
  \href{https://dx.doi.org/10.1007/s10851-010-0251-1}{\nolinkurl{doi:10.1007/s10851-010-0251-1}}.

\bibitem{chambolle2014ergodic}
A{.\nobreak\kern 0.33333em}Chambolle and T{.\nobreak\kern 0.33333em}Pock, On
  the ergodic convergence rates of a first-order primal--dual algorithm,
  \emph{Mathematical Programming}  (2015),  1--35,
  \href{https://dx.doi.org/10.1007/s10107-015-0957-3}{\nolinkurl{doi:10.1007/s10107-015-0957-3}}.

\bibitem{chambollepock2016introduction}
A{.\nobreak\kern 0.33333em}Chambolle and T{.\nobreak\kern 0.33333em}Pock, An
  introduction to continuous optimization for imaging, \emph{Acta Numerica} 25
  (2016),  161--319,
  \href{https://dx.doi.org/10.1017/S096249291600009X}{\nolinkurl{doi:10.1017/s096249291600009x}}.

\bibitem{chen2013pdfp}
P{.\nobreak\kern 0.33333em}Chen, J{.\nobreak\kern 0.33333em}Huang, and
  X{.\nobreak\kern 0.33333em}Zhang, A primal-dual fixed point algorithm for
  convex separable minimization with applications to image restoration,
  \emph{Inverse Problems} 29 (2013),  025011,
  \href{https://dx.doi.org/10.1088/0266-5611/29/2/025011}{\nolinkurl{doi:10.1088/0266-5611/29/2/025011}}.

\bibitem{chierchia2019proximity}
G{.\nobreak\kern 0.33333em}Chierchia, E{.\nobreak\kern 0.33333em}Chouzenoux,
  P.\,L{.\nobreak\kern 0.33333em}Combettes, and J.\,C{.\nobreak\kern
  0.33333em}Pesquet, The Proximity Operator Repository, 2019,
  \url{http://proximity-operator.net}.
\newblock Online resource.

\bibitem{clarke1990optimization}
F{.\nobreak\kern 0.33333em}Clarke, \emph{Optimization and Nonsmooth Analysis},
  Society for Industrial and Applied Mathematics, 1990,
  \href{https://dx.doi.org/10.1137/1.9781611971309}{\nolinkurl{doi:10.1137/1.9781611971309}}.

\bibitem{tuomov-nlpdhgm-redo}
C{.\nobreak\kern 0.33333em}Clason, S{.\nobreak\kern 0.33333em}Mazurenko, and
  T{.\nobreak\kern 0.33333em}Valkonen, Acceleration and global convergence of a
  first-order primal-dual method for nonconvex problems, \emph{SIAM Journal on
  Optimization} 29 (2019),  933--963,
  \href{https://dx.doi.org/10.1137/18M1170194}{\nolinkurl{doi:10.1137/18m1170194}},
  \href{https://arxiv.org/abs/1802.03347}{\nolinkurl{arXiv:1802.03347}}.

\bibitem{tuomov-nlpdhgm-general}
C{.\nobreak\kern 0.33333em}Clason, S{.\nobreak\kern 0.33333em}Mazurenko, and
  T{.\nobreak\kern 0.33333em}Valkonen, Primal-dual proximal splitting and
  generalized conjugation in nonsmooth nonconvex optimization, \emph{Applied
  Mathematics and Optimization}  (2020),
  \href{https://dx.doi.org/10.1007/s00245-020-09676-1}{\nolinkurl{doi:10.1007/s00245-020-09676-1}},
  \href{https://arxiv.org/abs/1901.02746}{\nolinkurl{arXiv:1901.02746}}.

\bibitem{clasonvalkonen2020nonsmooth}
C{.\nobreak\kern 0.33333em}Clason and T{.\nobreak\kern 0.33333em}Valkonen,
  Introduction to Nonsmooth Analysis and Optimization, 2020,
  \href{https://arxiv.org/abs/2001.00216}{\nolinkurl{arXiv:2001.00216}}.
\newblock Work in progress.

\bibitem{condat2013primaldual}
L{.\nobreak\kern 0.33333em}Condat, A Primal--Dual Splitting Method for Convex
  Optimization Involving Lipschitzian, Proximable and Linear Composite Terms,
  \emph{Journal of Optimization Theory and Applications} 158 (2013),  460--479,
  \href{https://dx.doi.org/10.1007/s10957-012-0245-9}{\nolinkurl{doi:10.1007/s10957-012-0245-9}}.

\bibitem{daubechies2004surrogate}
I{.\nobreak\kern 0.33333em}Daubechies, M{.\nobreak\kern 0.33333em}Defrise, and
  C{.\nobreak\kern 0.33333em}De~Mol, An iterative thresholding algorithm for
  linear inverse problems with a sparsity constraint, \emph{Communications on
  Pure and Applied Mathematics} 57 (2004),  1413--1457,
  \href{https://dx.doi.org/10.1002/cpa.20042}{\nolinkurl{doi:10.1002/cpa.20042}}.

\bibitem{docarmo2013riemannian}
M.\,P{.\nobreak\kern 0.33333em}do~Carmo, \emph{Riemannian Geometry},
  Mathematics: Theory \& Applications, Birkh{\"a}user, 2013.

\bibitem{douglas1956numerical}
J{.\nobreak\kern 0.33333em}Douglas, Jim and J{.\nobreak\kern
  0.33333em}Rachford, H.~H., On the Numerical Solution of Heat Conduction
  Problems in Two and Three Space Variables, \emph{Transactions of the American
  Mathematical Society} 82 (1956),  421--439,
  \href{https://dx.doi.org/10.2307/1993056}{\nolinkurl{doi:10.2307/1993056}}.

\bibitem{drori2015simple}
Y{.\nobreak\kern 0.33333em}Drori, S{.\nobreak\kern 0.33333em}Sabach, and
  M{.\nobreak\kern 0.33333em}Teboulle, A simple algorithm for a class of
  nonsmooth convex--concave saddle-point problems, \emph{Operations Research
  Letters} 43 (2015),  209--214,
  \href{https://dx.doi.org/10.1016/j.orl.2015.02.001}{\nolinkurl{doi:10.1016/j.orl.2015.02.001}}.

\bibitem{ekeland1999convex}
I{.\nobreak\kern 0.33333em}Ekeland and R{.\nobreak\kern 0.33333em}Temam,
  \emph{Convex analysis and variational problems}, SIAM, 1999.

\bibitem{federer1969gmt}
H{.\nobreak\kern 0.33333em}Federer, \emph{Geometric Measure Theory}, Springer,
  1969.

\bibitem{gabay}
D{.\nobreak\kern 0.33333em}Gabay, Applications of the Method of Multipliers to
  Variational Inequalities, in \emph{Augmented Lagrangian Methods: Applications
  to the Numerical Solution of Boundary-Value Problems}, M{.\nobreak\kern
  0.33333em}Fortin and R{.\nobreak\kern 0.33333em}Glowinski (eds.), volume~15
  of Studies in Mathematics and its Applications, North-Holland, 1983,
  299--331.

\bibitem{geman1984stochastic}
S{.\nobreak\kern 0.33333em}Geman and D{.\nobreak\kern 0.33333em}Geman,
  Stochastic relaxation, {G}ibbs distributions, and the {B}ayesian restoration
  of images, \emph{IEEE Transactions on Pattern Analysis and Machine
  Intelligence} 6 (1984),  721--741,
  \href{https://dx.doi.org/10.1109/TPAMI.1984.4767596}{\nolinkurl{doi:10.1109/tpami.1984.4767596}}.

\bibitem{hamedani2018primal}
E.\,Y{.\nobreak\kern 0.33333em}Hamedani and N.\,S{.\nobreak\kern
  0.33333em}Aybat, A primal-dual algorithm for general convex-concave saddle
  point problems, 2018,
  \href{https://arxiv.org/abs/1803.01401}{\nolinkurl{arXiv:1803.01401}}.

\bibitem{he2012convergence}
B{.\nobreak\kern 0.33333em}He and X{.\nobreak\kern 0.33333em}Yuan, Convergence
  Analysis of Primal-Dual Algorithms for a Saddle-Point Problem: From
  Contraction Perspective, \emph{SIAM Journal on Imaging Sciences} 5 (2012),
  119--149,
  \href{https://dx.doi.org/10.1137/100814494}{\nolinkurl{doi:10.1137/100814494}}.

\bibitem{hiriarturruty2004fundamentals}
J.\,B{.\nobreak\kern 0.33333em}Hiriart-Urruty and C{.\nobreak\kern
  0.33333em}Lemar{\'e}chal, \emph{Fundamentals of Convex Analysis}, Grundlehren
  Text Editions, Springer, 2004.

\bibitem{hohage2014generalization}
T{.\nobreak\kern 0.33333em}Hohage and C{.\nobreak\kern 0.33333em}Homann, A
  Generalization of the {C}hambolle-{P}ock Algorithm to {B}anach Spaces with
  Applications to Inverse Problems, 2014,
  \href{https://arxiv.org/abs/1412.0126}{\nolinkurl{arXiv:1412.0126}}.

\bibitem{hunt2014weighing}
A{.\nobreak\kern 0.33333em}Hunt, Weighing without touching: applying electrical
  capacitance tomography to mass flowrate measurement in multiphase flows,
  \emph{Measurement and Control} 47 (2014),  19--25,
  \href{https://dx.doi.org/10.1177/0020294013517445}{\nolinkurl{doi:10.1177/0020294013517445}}.

\bibitem{jauhiainen2019gaussnewton}
J{.\nobreak\kern 0.33333em}Jauhiainen, P{.\nobreak\kern 0.33333em}Kuusela,
  A{.\nobreak\kern 0.33333em}Seppänen, and T{.\nobreak\kern
  0.33333em}Valkonen, Relaxed {G}auss--{N}ewton methods with applications to
  electrical impedance tomography, \emph{SIAM Journal on Imaging Sciences}
  (2020),
  \href{https://arxiv.org/abs/2002.08044}{\nolinkurl{arXiv:2002.08044}}.
\newblock in press.

\bibitem{kingsley2006introduction}
P{.\nobreak\kern 0.33333em}Kingsley, Introduction to diffusion tensor imaging
  mathematics: Parts {I}-{III}, \emph{Concepts in Magnetic Resonance Part A} 28
  (2006),  101--179,
  \href{https://dx.doi.org/10.1002/cmr.a.20048}{\nolinkurl{doi:10.1002/cmr.a.20048}}.

\bibitem{kuchment2011mathematics}
P{.\nobreak\kern 0.33333em}Kuchment and L{.\nobreak\kern 0.33333em}Kunyansky,
  Mathematics of Photoacoustic and Thermoacoustic Tomography, in \emph{Handbook
  of Mathematical Methods in Imaging}, O{.\nobreak\kern 0.33333em}Scherzer
  (ed.), Springer, New York, NY, 2011,  817--865,
  \href{https://dx.doi.org/10.1007/978-0-387-92920-0_19}{\nolinkurl{doi:10.1007/978-0-387-92920-0_19}}.

\bibitem{lionsmercier1979splitting}
P{.\nobreak\kern 0.33333em}Lions and B{.\nobreak\kern 0.33333em}Mercier,
  Splitting algorithms for the sum of two nonlinear operators, \emph{SIAM
  Journal on Numerical Analysis} 16 (1979),  964--979,
  \href{https://dx.doi.org/10.1137/0716071}{\nolinkurl{doi:10.1137/0716071}}.

\bibitem{lipponen2011nonstationary}
A{.\nobreak\kern 0.33333em}Lipponen, A{.\nobreak\kern 0.33333em}Seppänen, and
  J.\,P{.\nobreak\kern 0.33333em}Kaipio, Nonstationary approximation error
  approach to imaging of three-dimensional pipe flow: experimental evaluation,
  \emph{Measurement Science and Technology} 22 (2011),  104013,
  \href{https://dx.doi.org/10.1088/0957-0233/22/10/104013}{\nolinkurl{doi:10.1088/0957-0233/22/10/104013}}.

\bibitem{loris2011generalization}
I{.\nobreak\kern 0.33333em}Loris and C{.\nobreak\kern 0.33333em}Verhoeven, On a
  generalization of the iterative soft-thresholding algorithm for the case of
  non-separable penalty, \emph{Inverse Problems} 27 (2011),  125007,
  \href{https://dx.doi.org/10.1088/0266-5611/27/12/125007}{\nolinkurl{doi:10.1088/0266-5611/27/12/125007}}.

\bibitem{lustig2007sparse}
M{.\nobreak\kern 0.33333em}Lustig, D{.\nobreak\kern 0.33333em}Donoho, and
  J.\,M{.\nobreak\kern 0.33333em}Pauly, Sparse MRI: The application of
  compressed sensing for rapid MR imaging, \emph{Magnetic Resonance in
  Medicine} 58 (2007),  1182–1195,
  \href{https://dx.doi.org/10.1002/mrm.21391}{\nolinkurl{doi:10.1002/mrm.21391}}.

\bibitem{malitsky2018forward}
Y{.\nobreak\kern 0.33333em}Malitsky and M.\,K{.\nobreak\kern 0.33333em}Tam, A
  forward-backward splitting method for monotone inclusions without
  cocoercivity, 2018,
  \href{https://arxiv.org/abs/1808.04162}{\nolinkurl{arXiv:1808.04162}}.

\bibitem{tuomov-nlpdhgm-block}
S{.\nobreak\kern 0.33333em}Mazurenko, J{.\nobreak\kern 0.33333em}Jauhiainen,
  and T{.\nobreak\kern 0.33333em}Valkonen, Primal-dual block-proximal splitting
  for a class of non-convex problems, \emph{Electronic Transactions on
  Numerical Analysis}  (2020),
  \href{https://arxiv.org/abs/1911.06284}{\nolinkurl{arXiv:1911.06284}}.
\newblock accepted.

\bibitem{minty-monotone}
G.\,J{.\nobreak\kern 0.33333em}Minty, On the Maximal domain of a
  {``}monotone{''} function, \emph{The Michigan Mathematical Journal} 8 (1961),
   135--–137.

\bibitem{nemirovski1983problem}
A.\,S{.\nobreak\kern 0.33333em}Nemirovski and D{.\nobreak\kern 0.33333em}Yudin,
  Problem Complexity and Method Efficiency in Optimization (translated from
  Russian), \emph{Wiley Interscience Series in Discrete Mathematics}  (1983).

\bibitem{nishimura1996principles}
D{.\nobreak\kern 0.33333em}Nishimura, \emph{Principles of Magnetic Resonance
  Imaging}, Stanford University, 1996.

\bibitem{ollinger1997pet}
J.\,M{.\nobreak\kern 0.33333em}Ollinger and J.\,A{.\nobreak\kern
  0.33333em}Fessler, Positron-emission tomography, \emph{IEEE Signal Processing
  Magazine} 14 (1997),  43--55,
  \href{https://dx.doi.org/10.1109/79.560323}{\nolinkurl{doi:10.1109/79.560323}}.

\bibitem{opial1967weak}
Z{.\nobreak\kern 0.33333em}Opial, Weak convergence of the sequence of
  successive approximations for nonexpansive mappings, \emph{Bulletin of the
  American Mathematical Society} 73 (1967),  591--597,
  \href{https://dx.doi.org/10.1090/S0002-9904-1967-11761-0}{\nolinkurl{doi:10.1090/s0002-9904-1967-11761-0}}.

\bibitem{pock2011iccv}
T{.\nobreak\kern 0.33333em}Pock and A{.\nobreak\kern 0.33333em}Chambolle,
  Diagonal preconditioning for first order primal-dual algorithms in convex
  optimization, in \emph{Computer Vision (ICCV), 2011 IEEE International
  Conference on}, IEEE, 2011,  1762--1769,
  \href{https://dx.doi.org/10.1109/ICCV.2011.6126441}{\nolinkurl{doi:10.1109/iccv.2011.6126441}}.

\bibitem{pock2009mumford}
T{.\nobreak\kern 0.33333em}Pock, D{.\nobreak\kern 0.33333em}Cremers,
  H{.\nobreak\kern 0.33333em}Bischof, and A{.\nobreak\kern 0.33333em}Chambolle,
  An algorithm for minimizing the Mumford-Shah functional, in \emph{12th IEEE
  Conference on Computer Vision}, IEEE, 2009,  1133--1140,
  \href{https://dx.doi.org/10.1109/ICCV.2009.5459348}{\nolinkurl{doi:10.1109/iccv.2009.5459348}}.

\bibitem{rockafellar-convex-analysis}
R.\,T{.\nobreak\kern 0.33333em}Rockafellar, \emph{Convex Analysis}, Princeton
  University Press, 1972.

\bibitem{rockafellar1976monotone}
R.\,T{.\nobreak\kern 0.33333em}Rockafellar, Monotone operators and the proximal
  point algorithm, \emph{SIAM Journal on Optimization} 14 (1976),  877--898,
  \href{https://dx.doi.org/10.1137/0314056}{\nolinkurl{doi:10.1137/0314056}}.

\bibitem{Rud1992}
L{.\nobreak\kern 0.33333em}Rudin, S{.\nobreak\kern 0.33333em}Osher, and
  E{.\nobreak\kern 0.33333em}Fatemi, Nonlinear Total Variation based noise
  removal algorithms, \emph{Physica D} 60 (1992),  259--268.

\bibitem{shen2002mathematical}
J{.\nobreak\kern 0.33333em}Shen and T.\,F{.\nobreak\kern 0.33333em}Chan,
  Mathematical Models for Local Nontexture Inpaintings, \emph{SIAM Journal on
  Applied Mathematics} 62 (2002),  1019--1043,
  \href{https://dx.doi.org/10.1137/S0036139900368844}{\nolinkurl{doi:10.1137/s0036139900368844}}.

\bibitem{trucu2009bioheat}
D{.\nobreak\kern 0.33333em}Trucu, D.\,B{.\nobreak\kern 0.33333em}Ingham, and
  D{.\nobreak\kern 0.33333em}Lesnic, An inverse coefficient identification
  problem for the bio-heat equation, \emph{Inverse Problems in Science and
  Engineering} 17 (2009),  65--83,
  \href{https://dx.doi.org/10.1080/17415970802082880}{\nolinkurl{doi:10.1080/17415970802082880}}.

\bibitem{uhlmann2009eit}
G{.\nobreak\kern 0.33333em}Uhlmann, Electrical impedance tomography and
  Calderón's problem, \emph{Inverse Problems} 25 (2009),  123011,
  \href{https://dx.doi.org/10.1088/0266-5611/25/12/123011}{\nolinkurl{doi:10.1088/0266-5611/25/12/123011}}.

\bibitem{tuomov-nlpdhgm}
T{.\nobreak\kern 0.33333em}Valkonen, A primal-dual hybrid gradient method for
  non-linear operators with applications to {MRI}, \emph{Inverse Problems} 30
  (2014),  055012,
  \href{https://dx.doi.org/10.1088/0266-5611/30/5/055012}{\nolinkurl{doi:10.1088/0266-5611/30/5/055012}},
  \href{https://arxiv.org/abs/1309.5032}{\nolinkurl{arXiv:1309.5032}}.

\bibitem{tuomov-proxtest}
T{.\nobreak\kern 0.33333em}Valkonen, Testing and non-linear preconditioning of
  the proximal point method, \emph{Applied Mathematics and Optimization}
  (2018),
  \href{https://dx.doi.org/10.1007/s00245-018-9541-6}{\nolinkurl{doi:10.1007/s00245-018-9541-6}},
  \href{https://arxiv.org/abs/1703.05705}{\nolinkurl{arXiv:1703.05705}}.

\bibitem{tuomov-blockcp}
T{.\nobreak\kern 0.33333em}Valkonen, Block-proximal methods with spatially
  adapted acceleration, \emph{Electronic Transactions on Numerical Analysis} 51
  (2019),  15--49,
  \href{https://dx.doi.org/10.1553/etna_vol51s15}{\nolinkurl{doi:10.1553/etna_vol51s15}},
  \href{https://arxiv.org/abs/1609.07373}{\nolinkurl{arXiv:1609.07373}}.

\bibitem{tuomov-inertia}
T{.\nobreak\kern 0.33333em}Valkonen, Inertial, corrected, primal-dual proximal
  splitting, \emph{SIAM Journal on Optimization} 30 (2020),  1391--1420,
  \href{https://dx.doi.org/10.1137/18M1182851}{\nolinkurl{doi:10.1137/18m1182851}},
  \href{https://arxiv.org/abs/1804.08736}{\nolinkurl{arXiv:1804.08736}}.

\bibitem{tuomov-cpaccel}
T{.\nobreak\kern 0.33333em}Valkonen and T{.\nobreak\kern 0.33333em}Pock,
  Acceleration of the {PDHGM} on partially strongly convex functions,
  \emph{Journal of Mathematical Imaging and Vision} 59 (2017),  394--414,
  \href{https://dx.doi.org/10.1007/s10851-016-0692-2}{\nolinkurl{doi:10.1007/s10851-016-0692-2}},
  \href{https://arxiv.org/abs/1511.06566}{\nolinkurl{arXiv:1511.06566}}.

\bibitem{vogel1998fast}
C.\,R{.\nobreak\kern 0.33333em}{Vogel} and M.\,E{.\nobreak\kern
  0.33333em}{Oman}, Fast, robust total variation-based reconstruction of noisy,
  blurred images, \emph{IEEE Transactions on Image Processing} 7 (1998),
  813--824,
  \href{https://dx.doi.org/10.1109/83.679423}{\nolinkurl{doi:10.1109/83.679423}}.

\bibitem{vu2013splitting}
B.\,C{.\nobreak\kern 0.33333em}V{\~{u}}, A splitting algorithm for dual
  monotone inclusions involving cocoercive operators, \emph{Advances in
  Computational Mathematics} 38 (2013),  667--681,
  \href{https://dx.doi.org/10.1007/s10444-011-9254-8}{\nolinkurl{doi:10.1007/s10444-011-9254-8}}.

\bibitem{zhang2011unified}
X{.\nobreak\kern 0.33333em}Zhang, M{.\nobreak\kern 0.33333em}Burger, and
  S{.\nobreak\kern 0.33333em}Osher, A unified primal-dual algorithm framework
  based on {B}regman iteration, \emph{Journal of Scientific Computing} 46
  (2011),  20--46,
  \href{https://dx.doi.org/10.1007/s10915-010-9408-8}{\nolinkurl{doi:10.1007/s10915-010-9408-8}}.

\end{thebibliography}
 \providecommand{\eprint}[1]{\href{http://arxiv.org/abs/#1}{arXiv:#1}}
  \providecommand{\noopsort}[1]{}
  \providecommand{\eprint}[1]{\href{http://arxiv.org/abs/#1}{arXiv:#1}}

%%%%%%%%%%%%%%%%%%%%%%%%%%%%%%%%%%%%%%%%%%%%%%%%%%%%%%%%%%%%%%%%%%%%%%%%%%%%%%%%%%%%%%%%%

%\printindex

\end{document}